\documentclass[11pt,reqno]{amsart}

\usepackage{amssymb}
\usepackage{amsmath}
\usepackage{amsthm}
\usepackage{latexsym}
\usepackage{amsfonts}
\usepackage{mathrsfs}
\usepackage{url}
\usepackage{titlesec}
\usepackage{color}

\newtheorem{thm}{Theorem}[section]
\newtheorem{cor}[thm]{Corollary}
\newtheorem{lem}[thm]{Lemma}
\newtheorem{prop}[thm]{Proposition}

\newtheorem{defn}[thm]{Definition}
\newtheorem{rem}[thm]{Remark}
\newtheorem{ex}[thm]{Example}

\numberwithin{equation}{section}
\titleformat{\section}{\normalfont\bfseries\centering}{\thesection.}{.25em}{}
\titleformat{\subsection}{\normalfont\bfseries}{\thesubsection.}{.25em}{}
\setlength{\oddsidemargin}{.1cm}
\setlength{\evensidemargin}{.1cm}
\setlength{\textwidth}{164mm}

%
%

\newcommand{\paragraf}{\textsection}

\newcommand{\R}{\ensuremath{\mathbb R}}    
\newcommand{\C}{\ensuremath{\mathbb C}}    
\newcommand{\N}{\ensuremath{\mathbb N}}    


\newcommand{\<}{\langle}
\renewcommand{\>}{\rangle}


\newcommand{\calA}{\mathcal A}         
\newcommand{\calB}{\mathcal B}         \newcommand{\frakB}{\mathfrak B}
\newcommand{\calC}{\mathcal C}         
\newcommand{\calD}{\mathcal D}         
         \newcommand{\frakE}{\mathfrak E}

\newcommand{\calH}{\mathcal H}         \newcommand{\frakH}{\mathfrak H}

\newcommand{\calK}{\mathcal K}         \newcommand{\frakK}{\mathfrak K}
\newcommand{\calL}{\mathcal L}

\newcommand{\calQ}{\mathcal Q}         
\newcommand{\calR}{\mathcal R}

\newcommand{\calV}{\mathcal V}         
\newcommand{\calW}{\mathcal W}


\newcommand{\la}{\lambda}
\newcommand{\veps}{\varepsilon}
\newcommand{\vphi}{\varphi}



\newcommand{\linspan}{\operatorname{span}}
\renewcommand{\ker}{\operatorname{ker}}
\newcommand{\ran}{\operatorname{ran}}



\newcommand{\Lra}{\Longrightarrow}
\newcommand{\Sra}{\Rightarrow}

\newcommand{\Llra}{\Longleftrightarrow}

\newcommand{\ol}{\overline}

\newcommand{\wt}{\widetilde}

\definecolor{darkgreen}{rgb}{0,0.6,0.1}


\newcommand{\tr}{\operatorname{Tr}}
\newcommand{\cls}{\ol{\linspan}\,}
\newcommand{\dist}{\operatorname{dist}}
\newcommand{\rmref}[1]{{\rm\ref{#1}}}

\newcommand{\diag}{\mathrm{diag}}

\newcommand{\braces}[1]{{\rm (}#1{\rm )}}

\begin{document}
\title[The Effect of Perturbations on Duals]{The Effect of Perturbations of Operator-Valued Frame Sequences and Fusion Frames on Their Duals}


\author{Gitta Kutyniok}
\address{Institut f\"ur Mathematik, Technische Universit\"at Berlin, Stra\ss e des 17.\ Juni 136, 10623 Berlin, Germany}
\email{kutyniok@math.tu-berlin.de}

\author{Victoria Paternostro}
\address{Universidad de Buenos Aires and 
IMAS-CONICET, Consejo  Nacional de Investigaciones Cient\'ificas y T\'ecnicas, 1428 Buenos Aires,  Argentina}
\email{vpater@dm.uba.ar}

\author{Friedrich Philipp}
\address{Universidad de Buenos Aires, Ciudad Universitaria, Pabell\'on I, 1428 Buenos Aires,  Argentina}
\email{fmphilipp@gmail.com}

\begin{abstract}
Fusion frames, and, more generally, operator-valued frame sequences are generalizations of classical frames,
which are today a standard notion when redundant, yet stable sequences are required. However, the question
of stability of duals with respect to perturbations has not been satisfactorily answered. In this paper, we quantitatively measure this stability by considering the associated deviations of the canonical and alternate dual
sequences from the original ones. It is proven that operator-valued frame sequences are indeed stable in this sense. 
Along the way, we also generalize existing definitions for fusion frame duals to the infinite-dimensional situation and analyze how they perform with respect to a list of desiderata which, to our minds, a fusion frame dual should satisfy. Finally, we prove a similar stability result as above for fusion frames and their canonical duals.\end{abstract}

\subjclass[2010]{Primary 42C15; Secondary 46C05}

\keywords{operator-valued frame, frame sequence, fusion frame, perturbation, dual frame, fusion frame dual.}

\thanks{G.K. was supported by the Einstein Foundation Berlin, by the Einstein Center for Mathematics Berlin (ECMath), 
by Deutsche Forschungsgemeinschaft (DFG) SPP 1798, by the DFG Collaborative Research Center TRR 109 ``Discretization 
in Geometry and Dynamics'', and by the DFG Research Center {\sc Matheon} ``Mathematics for key technologies'' in Berlin. V.P. was supported by a fellowship for postdoctoral researchers from the Alexander von Humboldt Foundation and by Grants UBACyT  2002013010022BA and CONICET-PIP 11220110101018
 F.P. gratefully acknowledges support from MinCyT Argentina under grant PICT-2014-1480.}
      
\maketitle
\thispagestyle{empty}

\section{Introduction}\vspace{.5cm}
Introduced in 1952 by Duffin and Schaeffer \cite{DS52}, frames as an extension of the concept of orthonormal
bases allowing for redundancy, while still maintaining stability properties, are today a standard notion in 
mathematics and engineering. Applications range from more theoretical problems such as the Kadison-Singer Problem
\cite{CT06} and tensor decomposition \cite{ORS15} over questions inspired by (sparse) approximation theory \cite{KLL12} to real-world problems such as wireless communication and coding theory \cite{SH03}, quantum mechanics \cite{EF02}, and inverse scattering problems \cite{KMP15}. Recently, motivated by applications and also theoretical goals, generalizations of this framework have been developed: g-frames \cite{s}, operator-valued frames \cite{klz}, and fusion frames \cite{ckl}.

Reconstruction of the original vector from frame, fusion frame, or more general measurements, is typically
achieved by using a so-called (alternate or canonical) dual system. While having ensured stability of the 
measurement process already in the definition, the question of stability of the reconstruction with respect to perturbations of the frame or generalizations of this concept is more involved. The most natural approach to quantitatively measure this stability is by considering the associated deviations of the dual systems. However, for instance in the fusion frame setting, there are several classes and definitions of duality in the literature and questions related to stability of these duals with respect to perturbations are completely open so far. These are
the problems we tackle in this paper.

\subsection{Frames, Fusion Frames, and Beyond}

The two key properties of frames are redundancy and stability, which can easily be seen by their definition. 
A (not necessarily orthogonal) sequence $\Phi = (\varphi_i)_{i \in I}$ in a Hilbert space $\calH$ forms a {\it frame}, if 
it exhibits a norm equivalence $\|(\<\: \cdot \:,\varphi_i\>)_{i \in I}\|_{\ell^2(I)} \asymp \| \cdot \|_\calH$.
The associated analysis operator -- allowing the analysis of a vector -- is defined by $T_\Phi : \calH \to 
\ell^2(I)$, $x \mapsto (\<x,\varphi_i\>)_{i \in I}$. 

An analysis operator can be regarded as a collection of one-dimensional projections, leading in a natural way to so-called
fusion frames as a generalization of frames, serving, in particular, applications under distributed 
processing requirements \cite{ckl}. A {\it fusion frame} is a sequence $\calW = ((W_i,c_i))_{i \in I}$ of pairs of closed subspaces and weights, again exhibiting a norm equivalence $\|(c_i P_{W_i}( \cdot))_{i \in I}\|_{\bigoplus_{i \in I}W_i} \asymp \| \cdot \|_\calH$ with $P_{W_i}$ denoting the respective orthogonal projection and analysis operator being given by $T_\calW : \calH\to\bigoplus_{i \in I}W_i$, $x \mapsto (c_i P_{W_i}( \cdot))_{i \in I}$. Increasing the flexibility level once more and aiming for a thorough theoretical understanding, the weights and orthogonal projections are replaced by general operators $(A_i)_{i\in I}$ with $A_i\in \calB(\calH, \calK)$, $i\in I$, $\calK$ a Hilbert space, leading to {\it operator-valued frames} \cite{klz} and to the equivalent notion of {\it g-frames} \cite{s}. 

\subsection{Reconstruction, Expansion, and Duals}

At the heart of frame and fusion frame theory as well as their extensions is the problem of reconstructing 
the original vector after its analysis, i.e., after applying the analysis operator, which can also be regarded as a measurement operator or sampling operator. In frame theory,
the reconstruction formula takes the shape of
\[
x = \sum_{i \in I} \<x,\varphi_i\> \tilde{\varphi_i} = \sum_{i \in I} \<x,\tilde{\varphi_i}\> \varphi_i,\quad x \in \calH,
\]
where $(\tilde{\varphi_i})_{i \in I}$ is a so-called {\it {\rm (}alternate{\rm )} dual frame}. As can be seen, the 
second part of this formula even allows an expansion into the frame with a closed form sequence of
coefficients. Notice that this is not self-evident due to the redundancy of the frame. The {\it canonical 
dual frame} is a specific dual frame, which exhibits a closed form expression.

In the fusion frame setting, one certainly aims for a sequence with similarly advantageous properties, which
one might combine in the following list of desiderata for fusion frame duals:
\begin{enumerate}
\item[{\bf (D1)}] Reconstruction of any $x \in \calH$ from $T_\calW x$ possible.
\item[{\bf (D2)}] Proper generalization of alternate dual frames.
\item[{\bf (D3)}] Constitute a fusion frame themselves.
\item[{\bf (D4)}] Proper generalization of the canonical dual frame.
\end{enumerate}

The only approaches so far to introduce fusion frame duals can be found in \cite{g}, \cite{hmbz}, and \cite{hm}, where the latter two appeared several years after fusion frames were introduced in \cite{ck}. This already shows the delicacy of the fusion frame setting (noting that the definition of a dual frame is in fact rather simple). Here, it is our objective to consider, in particular, the effect of fusion frame perturbations on their duals. For this reason, we check how the particular definitions of fusion frame duals perform with respect to the above desiderata. As a result, we find that the duals from \cite{hm} and \cite{hmbz} satisfy the requirements, in contrast to the definition in \cite{g}. However, we also argue that a particular definition from \cite{hm} is the most appropriate to our minds, raise it to the infinite-dimensional situation and modify it slightly, culminating in the definition of fusion frame duals which we shall work with in this paper.

Interestingly, the definition of a dual in the setting of operator-valued frames or the slightly more general setting of operator-valued frame sequences is not that delicate. The reason for this phenomenon is that it is now only required that the dual shall constitute an operator-valued frame sequence instead of such a special sequence as a fusion frame (cf. {\bf (D3)}). This was also noticed in \cite{hm} as a motivation for a new definition of fusion frame duals.

\subsection{Analysis of Stability}

While a frame itself, its analysis operator being continuous and bounded below, provides stability with respect to both the measuring and the reconstruction process, it is not clear how a small perturbation of a frame, an operator-valued frame, or a fusion frame effects the associated set of duals. Here, we consider so-called {\it $\mu$-perturbations} which are (operator-valued) frames or frame sequences whose anaysis operator does not differ more than $\mu > 0$ from the analysis operator of the original object in norm. In the situation of frames, this is a well-studied subject (see, e.g., \cite{Bal97,CC97,CLZZ06,CH97,FS06,hmp}).

As discussed before, exact reconstruction or expansion is a crucial property of frames and their extensions, 
which in turn depends heavily on (alternate or canonical) dual sequences. Thus, in this paper, we study the
effect of $\mu$-perturbations on those dual sequences. To be precise, we consider a fixed triple consisting of an operator-valued frame sequence, a perturbation of it, and one of its duals and ask how close the duals of the perturbed sequence are to the original dual. Not only will this provide a very clear picture of this type of stability, but also allow us a deep understanding of the relation between the dual sequences and the original operator-valued frame sequences.

Results on the perturbation effect on the canonical duals of frame sequences can be found in \cite{hmp}. Perturbations of sequences beyond the frame setting have hardly been studied before. Except the works \cite{ag13} and \cite{s07} on perturbations of g-frames nothing can be found in this direction. Concerning fusion frames, the only work in this direction seems to be the paper \cite{ckl}, where the authors consider the stability of the fusion frame property in terms of a slightly different notion of perturbation.


\subsection{Our Contribution}

Our contributions are two-fold. First, we present a complete description of the general setting of operator-valued frame sequences and fusion frames and consider duals in both settings. In particular, we prove a parametrization of the duals of operator-valued frame sequences in terms of their analysis operators. In contrast to the operator-valued frame case (see \cite{klz} and Definition \ref{d:dual_ov}) the definition of fusion frame duals does not allow a straightforward generalization from the frame setting and thus requires a much more delicate handling. Therefore, we give an overview of the existing definitions of fusion frame duals from \cite{g,hm,hmbz} and discuss their performance with respect to the list of desiderata {\bf (D1)}--{\bf (D4)}. As a result of this discussion we define a version of dual fusion frames (Definition \ref{d:dual_fus}) which, in the finite-dimensional case, is very close to that of the ``block diagonal dual fusion frames'' from \cite{hm}. Additionally, we give a characterization of fusion frame duals in Theorem \ref{t:charac_dual_fus}.

Second, we provide a comprehensive perturbation analysis of both fusion frames and general operator-valued frame sequences in terms of the effect on their (alternate and canonical) duals, thereby generalizing and significantly improving existing results. In Theorems \ref{t:canonicals}, \ref{t:o-v-duals_sequences}, as well as \ref{thm:effect_pert_FF}, we show that indeed stability can be achieved in these situations and derive precise error estimates. The perturbation results Theorem \ref{t:canonicals} and Theorem \ref{t:o-v-duals_sequences} deal with the operator-valued frame sequence case whereas the fusion frame situation is tackled in Theorem \ref{thm:effect_pert_FF}. In the operator-valued frame case, we show in Theorem \ref{t:pert_frame_duals} that the dual, chosen in Theorem \ref{t:o-v-duals_sequences} to achieve stability, is a best approximation of the original dual among the set of duals of the perturbation. The big difference between the definitions of fusion frame duals and duals of operator-valued frames leads to the fact that the perturbation problem for fusion frames is much more delicate and requires another type of technique.

\subsection{Outline}

The paper is organized as follows. Section \ref{sec:OV_Sequences} is devoted to the introduction of operator-valued Bessel and frame sequences extending and slightly deviating from \cite{klz}. Vector frames and fusion frames are discussed as special cases. Canonical and alternate duals are then introduced in Section \ref{sec:Duals} and their key properties analyzed. First, in Subsection \ref{ss:duals}, duals are defined and studied in the general setting of operator-valued frame sequences. This is followed by the study of several notions of fusion frame duals and their performance with respect to our list of desired properties (see Subsection \ref{subsec:DualFF}). The last two sections focus on the impact of perturbations of the initial sequences on their duals, both in the general setting (Section \ref{sec:Perturbations}) and in the fusion frame setting (Section \ref{sec:Perturbations_FF}).

\subsection{Notation}
We close this introduction by fixing the notation we will use. 
The set of all bounded and everywhere defined linear operators between two Hilbert spaces $\calH$ and $\calK$ will be denoted by $\calB(\calH,\calK)$. As ususal, we set $\calB(\calH) := \calB(\calH,\calH)$. The norm on $\calB(\calH,\calK)$ will be the usual operator norm, i.e.
$$
\|T\| := \sup\left\{\|Tx\| : x\in\calH,\,\|x\|=1\right\}.
$$
We denote the range (i.e., the image) and the kernel (i.e., the null space) of $T\in\calB(\calH,\calK)$ by $\ran T$ and $\ker T$, respectively. The restriction of an operator $T\in\calB(\calH,\calK)$ to a subspace $V\subset\calH$ will be denoted by $T|V$. If $V$ is closed, by $P_V$ we denote the orthogonal projection onto $V$ in $\calH$ and by $I_V$ the identity operator on $V$.

Throughout this paper, $I\subset\N$ stands for a finite or countable index set and  $\calH$ and $\calK$ always denote Hilbert spaces. Recall that the space of $\calH$-valued $\ell^2$-sequences over $I$, defined by
$$
\ell^2(I,\calH) := \left\{(x_i)_{i\in I} : x_i\in\calH\,\forall i\in I,\;\sum_{i\in I}\|x_i\|^2 < \infty\right\},
$$
is a Hilbert space with scalar product
$$
\big\<(x_i)_{i\in I},(y_i)_{i\in I}\big\> = \sum_{i\in I}\<x_i,y_i\>,\quad (x_i)_{i\in I},(y_i)_{i\in I}\in\ell^2(I,\calH).
$$
We shall often denote
$$
\frakH := \ell^2(I,\calH)
\quad\text{and}\quad
\frakK := \ell^2(I,\calK).
$$

An operator $T\in\calB(\calH,\calK)$ is called {\em bounded below} if there exists $c > 0$ such that $\|Tx\|\ge c\|x\|$ for all $x\in\calH$. In the sequel, we will frequently make use of the following well known operator theoretical lemma.

\begin{lem}\label{l:op1}
Let $\calH$ and $\calK$ be Hilbert spaces. Then for $T\in\calB(\calH,\calK)$ the following statements are equivalent.
\begin{enumerate}
\item[{\rm (i)}]   $T$ is injective and $\ran T$ is closed.
\item[{\rm (ii)}]  $T^*$ is surjective.
\item[{\rm (iii)}] $T$ is bounded below.
\end{enumerate}
\end{lem}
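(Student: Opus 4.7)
The plan is to prove the three equivalences in a cycle (iii) $\Rightarrow$ (i) $\Rightarrow$ (ii) $\Rightarrow$ (iii), invoking only the inverse mapping theorem and the elementary orthogonality relation $\ol{\ran T^*} = (\ker T)^\perp$, both of which are standard for bounded operators between Hilbert spaces. Since the statement is a classical operator-theoretic fact, I expect no serious obstacles — the routine work is just making each implication self-contained.

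For (iii) $\Rightarrow$ (i), I would observe that if $\|Tx\| \geq c\|x\|$ for all $x \in \calH$, then $Tx = 0$ forces $x = 0$, giving injectivity. For closedness, take a sequence $(Tx_n)$ in $\ran T$ converging to some $y \in \calK$; the lower bound turns $(Tx_n)$ being Cauchy into $(x_n)$ being Cauchy, so $x_n \to x$ for some $x \in \calH$, and continuity of $T$ gives $Tx = y \in \ran T$.

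For (i) $\Rightarrow$ (ii), I would use the identity $\ol{\ran T^*} = (\ker T)^\perp$. Since $T$ is injective, $(\ker T)^\perp = \calH$, so $\ran T^*$ is dense in $\calH$. It therefore suffices to show $\ran T^*$ is closed. For this I would argue that $T$, viewed as a map $\calH \to \ran T$, is a bounded bijection between Hilbert spaces (since $\ran T$ is closed, hence itself a Hilbert space), so by the inverse mapping theorem $T^{-1} : \ran T \to \calH$ is bounded. This makes $T$ bounded below, and then a direct estimate shows $T^*$ to be surjective: given $x \in \calH$, one writes $x$ via the bounded inverse and dualizes. Alternatively, and more cleanly, the closed range theorem (or a direct argument using the polar decomposition / the fact that $\ran T^* = \ran(T^*T)$ when $\ran T$ is closed) immediately yields $\ran T^* = (\ker T)^\perp = \calH$.

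Finally, for (ii) $\Rightarrow$ (iii), suppose $T^* : \calK \to \calH$ is surjective. By the open mapping theorem applied to $T^*$, there exists $c > 0$ such that for every $x \in \calH$ there is $y \in \calK$ with $T^*y = x$ and $\|y\| \leq c^{-1}\|x\|$. Then for any $x \in \calH$,
$$
\|x\|^2 = \<T^*y, x\> = \<y, Tx\> \leq \|y\|\,\|Tx\| \leq c^{-1}\|x\|\,\|Tx\|,
$$
from which $\|Tx\| \geq c\|x\|$, proving that $T$ is bounded below. This closes the cycle and completes the proof. The only step where one must be a little careful is the invocation of the open mapping / inverse mapping theorem, since these require both spaces to be complete — but this is automatic here as $\calH$, $\calK$, and (in (i) $\Rightarrow$ (ii)) the closed subspace $\ran T$ are all Hilbert spaces.
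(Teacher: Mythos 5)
Your proof is correct. Note that the paper itself offers no proof of this lemma --- it is stated as a ``well known operator theoretical lemma'' and used as a black box --- so there is no argument to compare against; your cyclic argument (iii)$\Rightarrow$(i)$\Rightarrow$(ii)$\Rightarrow$(iii) is a standard and complete way to establish it. The only slightly vague spot is the ``dualize via the bounded inverse'' step in (i)$\Rightarrow$(ii) (which would be made precise by applying the Riesz representation theorem to the bounded functional $Tz\mapsto\<z,x\>$ on the Hilbert space $\ran T$), but your alternative route through $\ol{\ran T^*}=(\ker T)^\perp$ together with the closed range theorem settles that implication cleanly in any case.
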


\vspace{.2cm}
\section{Operator-valued Sequences}\label{sec:OV_Sequences}
This section is devoted to recalling the definition of the main object we shall work with, namely {\it operator-valued frame sequences}. We shall introduce the associated operators (analysis, synthesis, and frame operator) and discuss how vector frames and fusion frames fit into this framework.

\subsection{Operator-valued Frame Sequences}
As mentioned before, the concept of operator-valued frames  was first introduced  and intensively studied in \cite{klz}. Here, we generalize this notion to operator-valued Bessel sequences.

\begin{defn}
Let $\calH$ and $\calK$ be Hilbert spaces. A sequence of operators $\calA=(A_i)_{i\in I}$ with $A_i\in \calB(\calH, \calK)$, $i\in I$, is said to be an {\em operator-valued} {\rm (}or $\calB(\calH,\calK)$-{\em valued}\,{\rm )} {\em Bessel sequence} if there exists $\beta>0$ such that
\begin{equation}\label{eq:op-valued-bessel}
\sum_{i\in I}\|A_ix\|^2\leq \beta\|x\|^2\quad \textrm{for all }x\in\calH.
\end{equation}
The bound $\beta$ is said to be a {\em Bessel bound} of $\calA$.
\end{defn}

The following characterization of operator-valued Bessel sequences can easily be proved by using \cite[Chapter VII, p. 263]{rs}.

\begin{lem}\label{l:charac}
Let $\calA = (A_i)_{i\in I}$ be a sequence of operators in $\calB(\calH,\calK)$ and $\beta > 0$. Then the following statements are equivalent.
\begin{enumerate}
\item[{\rm (i)}]  $\calA$ is an operator-valued Bessel sequence with Bessel bound $\beta$.
\item[{\rm (ii)}] The series
$$
\sum_{i\in I}A_i^*A_i
$$
converges in the strong operator topology\footnote{A sequence $(T_i)_{i\in I}\subset\calB(\calH)$ is said to converge in the strong operator topology to $T\in\calB(\calH)$ if $(T_ix)_{i\in I}$ converges to $Tx$ for every $x\in\calH$.} to a non-negative self-adjoint operator with norm $\le\beta$.
\end{enumerate}
\end{lem}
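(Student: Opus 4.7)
The plan is to interpret the partial sums of the series $\sum_i A_i^*A_i$ as a monotone net of non-negative self-adjoint operators and then apply the standard monotone convergence theorem for self-adjoint operators (which is the content of the passage from \cite{rs} cited just before the lemma).

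The key observation I would establish first is the identity
$$
\bigg\<\sum_{i\in F}A_i^*A_i\,x,x\bigg\> \;=\; \sum_{i\in F}\|A_ix\|^2
$$
for every finite subset $F\subset I$ and every $x\in\calH$. In particular, each partial sum $S_F := \sum_{i\in F}A_i^*A_i$ is a non-negative self-adjoint operator, and the net $(S_F)$ indexed by finite subsets of $I$ (ordered by inclusion) is monotone non-decreasing.

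For (i)$\Rightarrow$(ii), the Bessel condition \eqref{eq:op-valued-bessel} gives $\<S_Fx,x\>\le\beta\|x\|^2$ for every finite $F$, so $0\le S_F\le\beta I$ uniformly. The cited monotone convergence principle for self-adjoint operators then yields a strong-operator-topology limit $S\in\calB(\calH)$ which is self-adjoint and non-negative, with $\|S\|\le\beta$. This $S$ is by construction the strong-operator sum $\sum_{i\in I}A_i^*A_i$. For (ii)$\Rightarrow$(i), suppose the series converges in SOT to some $S=S^*\ge 0$ with $\|S\|\le\beta$. Fixing $x\in\calH$ and applying the inner product with $x$ to the convergent partial sums gives
$$
\sum_{i\in I}\|A_ix\|^2 \;=\; \<Sx,x\> \;\le\; \|S\|\,\|x\|^2 \;\le\; \beta\|x\|^2,
$$
which is exactly \eqref{eq:op-valued-bessel}.

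There is no real obstacle here: both implications are essentially bookkeeping once the monotone convergence theorem for self-adjoint operators is invoked. The only minor point to be careful about is that, since $I$ may be countably infinite, convergence of the series has to be understood as convergence of the net of finite partial sums (equivalently, strong convergence of $\sum_{i=1}^N A_i^*A_i$ as $N\to\infty$ when $I\subset\N$), and that the supremum of a monotone bounded net of positive self-adjoint operators indeed exists in SOT with norm bounded by the uniform bound on the net.
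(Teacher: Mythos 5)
Your proof is correct and follows exactly the route the paper intends: the paper gives no written proof but simply cites the monotone convergence theorem for bounded monotone sequences of self-adjoint operators from \cite[Chapter VII, p.~263]{rs}, which is precisely the principle you invoke for the monotone net of partial sums $S_F=\sum_{i\in F}A_i^*A_i$. Both implications are handled as the authors evidently had in mind, so there is nothing to add.
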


For any sequence of operators $\calA = (A_i)_{i\in I}\subset\calB(\calH,\calK)$ we set
\begin{equation}\label{e:HPA}
\calH_\calA := \cls\left\{\ran A_i^*:\, i\in I\right\}\qquad \textrm{and} \qquad P_\calA:=P_{\calH_\calA}.
\end{equation}
Given an operator-valued Bessel sequence $\calA=(A_i)_{i\in I}$, we define its associated {\it analysis operator} $T_\calA:\calH\to\frakK$ by
\begin{equation}\label{analysis-op}
T_\calA x:=(A_ix)_{i\in I},\quad x\in\calH,
\end{equation}
where, we recall, 
$$
\frakK := \ell^2(I,\calK).
$$
The relation \eqref{eq:op-valued-bessel} ensures that $T_\calA$ is well-defined and that it is an element of $\calB(\calH,\frakK)$ with $\|T_\calA\|\leq\sqrt{\beta}$. The adjoint operator $T_\calA^*$ of $T_\calA$ is called the {\it synthesis operator} of $\calA$, and it is easily seen that
\begin{equation}\label{e:adj}
T_\calA^*(z_i)_{i\in I} = \sum_{i\in I}A^*_iz_i,\quad (z_i)_{i\in I}\in\frakK.
\end{equation}
For this, we only have to show that the series $\sum_{i\in I}A_i^*z_i$ converges in $\calH$. But this is (in the case $I = \N$) seen from
\begin{align*}
\left\|\sum_{i=m+1}^n A_i^*z_i\right\|^2
&= \sup_{\|x\|=1}\left|\left\<x,\sum_{i=m+1}^n A_i^*z_i\right\>\right|^2\le\left(\sup_{\|x\|=1}\sum_{i=m+1}^n|\<A_ix,z_i\>|\right)^2\\
&\le\sup_{\|x\|=1}\left(\sum_{i=m+1}^n\|A_ix\|^2\right)\left(\sum_{i=m+1}^n\|z_i\|^2\right)\le\beta\left(\sum_{i=m+1}^n\|z_i\|^2\right).
\end{align*}

\smallskip
\begin{lem}\label{l:ranges}
Let $\calA = (A_i)_{i\in I}$ be a $\calB(\calH,\calK)$-valued Bessel sequence. Then
\begin{equation}\label{e:TH}
\ol{\ran T_\calA^*} = \calH_\calA\quad\text{and}\quad\ker T_\calA = \calH_\calA^\perp.
\end{equation}
\end{lem}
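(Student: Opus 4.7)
The plan is to establish the two identities in sequence, deriving the second from the first via the standard orthogonality relation $\ker T = (\ran T^*)^\perp$, which holds for any bounded operator between Hilbert spaces.

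For the first identity $\overline{\ran T_\calA^*} = \calH_\calA$, I would prove two inclusions. For ``$\supseteq$'', I would fix $i_0 \in I$ and $z \in \calK$, and form the sequence $(z_i)_{i \in I} \in \frakK$ with $z_{i_0} = z$ and $z_i = 0$ otherwise. Then formula \eqref{e:adj} gives $T_\calA^*(z_i)_{i \in I} = A_{i_0}^* z$, so $\ran A_{i_0}^* \subseteq \ran T_\calA^*$. Taking unions over $i_0$, linear spans, and closures yields $\calH_\calA \subseteq \overline{\ran T_\calA^*}$. For ``$\subseteq$'', note that for each finite truncation of the series in \eqref{e:adj}, the partial sum $\sum_{i=1}^n A_i^* z_i$ lies in $\linspan\{\ran A_i^* : i \in I\} \subseteq \calH_\calA$; since $\calH_\calA$ is closed and the series converges in $\calH$, the limit $T_\calA^*(z_i)_{i \in I}$ lies in $\calH_\calA$. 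Hence $\ran T_\calA^* \subseteq \calH_\calA$, and closedness of $\calH_\calA$ gives the inclusion for the closure.

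For the second identity, I would simply invoke the standard fact that for $T \in \calB(\calH, \frakK)$ one has $\ker T = (\ran T^*)^\perp = (\overline{\ran T^*})^\perp$, and substitute the first identity to conclude $\ker T_\calA = \calH_\calA^\perp$.

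No step here is really an obstacle; the only mild subtlety is ensuring that $T_\calA^*(z_i)_{i \in I}$ genuinely belongs to $\calH_\calA$, which requires knowing the series in \eqref{e:adj} converges in $\calH$ (already established in the excerpt just above the lemma) so that the closed subspace $\calH_\calA$ captures the limit.
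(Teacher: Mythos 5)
Your proof is correct, but it runs in the opposite direction from the paper's. The paper proves the kernel identity first, in one line: it writes $\calH_\calA^\perp = \left(\cls\{\ran A_i^* : i\in I\}\right)^\perp = \bigcap_{i\in I}(\ran A_i^*)^\perp = \bigcap_{i\in I}\ker A_i = \ker T_\calA$, where the last equality is immediate from the definition $T_\calA x = (A_ix)_{i\in I}$; the range identity is then obtained by taking orthogonal complements. You instead prove the range identity directly by a two-inclusion argument on $\ran T_\calA^*$ -- using the canonical embeddings to get $\ran A_{i_0}^*\subseteq\ran T_\calA^*$, and the convergence of the series in \eqref{e:adj} together with closedness of $\calH_\calA$ for the reverse inclusion -- and then dualize via $\ker T_\calA = (\ran T_\calA^*)^\perp$. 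Both arguments hinge on the same kernel--range duality for adjoints; the paper's version is shorter because the intersection-of-kernels computation avoids any discussion of the synthesis operator's series, while yours is more explicit about what $T_\calA^*$ actually does and correctly flags the one point of care (convergence of the series so that the limit stays in the closed subspace $\calH_\calA$). Either is a complete proof.
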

\begin{proof}
Indeed, we have
$$
\calH_\calA^\perp = \left(\cls\{\ran A_i^*:\, i\in I\}\right)^\perp = \bigcap_{i\in I}\left(\ran A_i^*\right)^\perp = \bigcap_{i\in I}\ker A_i = \ker T_\calA.
$$
The first relation follows from this.
\end{proof}

The operator  $S_\calA:=T_\calA^*T_\calA = \sum_{i\in I}A_i^*A_i$ (the series converging in the strong operator topology) is called  the {\it frame operator} corresponding to $\calA$. It follows from Lemma \ref{l:charac} that $S_\calA$ is a bounded non-negative self-adjoint operator in $\calH$. Moreover, \eqref{e:TH} implies that $\calH_\calA$ is invariant under $S_\calA$ and that
$$
\<S_\calA x,x\> = \|T_\calA x\|^2 = \sum_{i\in I}\|A_i x\|^2 > 0\quad\text{for all }x\in\calH_\calA\setminus\{0\}.
$$

\begin{defn}
A sequence $\calA = (A_i)_{i\in I}$ of operators in $\calB(\calH,\calK)$ is called an {\em operator-valued} {\rm (}or $\calB(\calH,\calK)$-{\em valued}\,{\rm )} {\em frame sequence} if there exist $\alpha,\beta > 0$ such that
\begin{equation}\label{e:op-valued-frame}
\alpha\|x\|^2\,\le\,\sum_{i\in I}\|A_ix\|^2\,\le\,\beta\|x\|^2\quad\text{for all }x\in\calH_\calA.
\end{equation}
The constants $\alpha$ and $\beta$ are called {\em lower} and {\em upper frame bound} of $\calA$, respectively. If $\alpha = \beta$ is possible, $\calA$ is said to be {\em tight}. If even $\alpha = \beta = 1$, then $\calA$ is called an operator-valued {\em Parseval} frame sequence. If $\calH_\calA = \calH$ we say that $\calA$ is an {\em operator-valued frame} for $\calH$.
\end{defn}

An operator-valued frame sequence $\calA = (A_i)_{i\in I}$ with upper frame bound $\beta$ is an operator-valued Bessel sequence with Bessel bound $\beta$ since for $x\in\calH$ we have $A_i(I - P_\calA)x = 0$ for each $i\in I$ (see \eqref{e:TH}) and hence
$$
\sum_{i\in I}\|A_ix\|^2 = \sum_{i\in I}\|A_iP_\calA x\|^2\le\beta\|P_\calA x\|^2\le\beta\|x\|^2.
$$
Hence, $S_\calA$ is defined, and \eqref{e:op-valued-frame} is equivalent to $\alpha I_{\calH_\calA}\le S_\calA|\calH_\calA\le\beta I_{\calH_\calA}$. In particular, $S_\calA|\calH_\calA$ is boundedly invertible.

For each $j\in I$ we define the canonical embedding $\frakE_j : \calK\to\frakK$ by
$$
\frakE_jz := (\delta_{ij}z)_{i\in I},\quad z\in\calK.
$$
Its adjoint $\frakE_j^* : \frakK\to\calK$ is given by
$$
\frakE_j^*(z_i)_{i\in I} = z_j,\quad (z_i)_{i\in I}\in\frakK.
$$
Hence, for an operator-valued Bessel sequence $\calA=(A_i)_{i\in I}\subset\calB(\calH,\calK)$ we have
$$
\frakE_j^*T_\calA = A_j.
$$
From this, it follows that $T_\calA$ determines the operator-valued Bessel sequence $\calA$ uni\-que\-ly. Therefore, we will often identify an operator-valued Bessel sequence and its analysis operator. Even more holds: each $T\in\calB(\calH,\frakK)$ is the analysis operator of a $\calB(\calH,\calK)$-valued Bessel sequence. Indeed, for each $i\in I$, define $A_i\in\calB(\calH, \calK)$ by $A_i:=\frakE_i^*T$. Then, we have $A_i x = \frakE_i^*Tx = (Tx)_i$, $i\in I$, and thus 
$$
\sum_{i\in I}\|A_ix\|^2 = \|Tx\|^2\le\|T\|^2\|x\|^2,\quad x\in\calH,
$$
which implies that $\calA=(A_i)_{i\in I}$  is an operator-valued Bessel sequence. Moreover, for $x\in\calH$ we have $Tx = (A_ix)_{i\in I} = T_\calA x$, hence, $T = T_\calA$. In particular, the (linear) mapping $\calA\mapsto T_\calA$ between the linear space of operator-valued Bessel sequences $\frakB(I,\calH,\calK)$, indexed by $I$, and $\calB(\calH,\frakK)$ is bijective. With the norm $\|\calA\|:=\|T_\calA\|$ on $\frakB(I,\calH,\calK)$ it even becomes unitary.\label{page:Besselnorm}

The next lemma is the analogue of Corollary 5.5.3. in \cite{c}.

\begin{lem}\label{l:easy_op-valued-frames}
Let $T\in\calB(\calH,\frakK)$. Then the following statements hold:
\begin{itemize}
\item[{\rm (i)}]  $T$ is the analysis operator of an operator-valued frame sequence if and only if $\ran T^*$ is closed.
\item[{\rm (ii)}] $T$ is the analysis operator of an operator-valued frame if and only if $T^*$ is surjective.
\end{itemize}
\end{lem}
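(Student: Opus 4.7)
The plan is to reduce both statements to the abstract Lemma \ref{l:op1} via the bijective identification $T = T_\calA$ with $A_i := \frakE_i^* T$ that was established just above. The structural input we need from Lemma \ref{l:ranges} is that $\ol{\ran T^*} = \calH_\calA$ and $\ker T = \calH_\calA^\perp$. In both parts the upper frame bound is automatic (because $T$ is bounded), so only the lower frame bound is really at stake.

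For part (i), the lower frame condition $\sum_{i\in I}\|A_ix\|^2 \ge \alpha\|x\|^2$ for all $x\in\calH_\calA$ says precisely that $T|\calH_\calA : \calH_\calA \to \frakK$ is bounded below. Since $\ker T = \calH_\calA^\perp$, this restriction is automatically injective, so applying Lemma \ref{l:op1} to $T|\calH_\calA$, boundedness below is equivalent to surjectivity of the adjoint $(T|\calH_\calA)^* : \frakK \to \calH_\calA$. Because $\ran T^* \subseteq \ol{\ran T^*} = \calH_\calA$, this adjoint coincides with $T^*$ regarded with codomain $\calH_\calA$, so its surjectivity reads $\ran T^* = \calH_\calA$. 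But $\calH_\calA = \ol{\ran T^*}$ always holds, so this is in turn equivalent to $\ran T^*$ being closed, which is (i).

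For part (ii), recall that $\calA$ is an operator-valued frame exactly when it is an operator-valued frame sequence \emph{and} additionally $\calH_\calA = \calH$. By part (i) the frame-sequence property is equivalent to $\ran T^*$ being closed, and together with $\calH_\calA = \ol{\ran T^*} = \calH$ this forces $\ran T^* = \calH$, i.e.\ $T^*$ is surjective. Conversely, surjectivity of $T^*$ makes $\ran T^* = \calH$ automatically closed and forces $\calH_\calA = \calH$, so (i) applies and $\calA$ is a frame. Alternatively, one may simply apply Lemma \ref{l:op1} directly to $T$: surjectivity of $T^*$ is equivalent to $T$ being bounded below on all of $\calH$, which is exactly the operator-valued frame condition. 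I do not foresee a genuine obstacle; the only point to watch is not to conflate closedness of $\ran T$ and $\ran T^*$ (equivalent by the closed-range theorem, but it is $\ran T^*$ that is tied to $\calH_\calA$ in this setup), and to correctly identify the adjoint of the restriction $T|\calH_\calA$ with $T^*$ viewed as landing in $\calH_\calA$.
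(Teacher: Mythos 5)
Your proof is correct, and it follows the same overall strategy as the paper: identify $T$ with $T_\calA$ for $\calA=(\frakE_i^*T)_{i\in I}$, use Lemma \ref{l:ranges} to see that the restriction $T|\calH_\calA$ is injective with $\ol{\ran T^*}=\calH_\calA$, and then invoke Lemma \ref{l:op1}. The one genuine (if small) divergence is in part (i): the paper applies the equivalence (i)$\Leftrightarrow$(iii) of Lemma \ref{l:op1} to $T|\calH_\calA$ to conclude that $\ran T$ is closed, and then needs the closed range theorem to translate this into closedness of $\ran T^*$; you instead apply (ii)$\Leftrightarrow$(iii) to $T|\calH_\calA$ and identify $(T|\calH_\calA)^*$ with $T^*$ viewed as landing in $\calH_\calA$ (legitimate, since $\ran T^*\subset\ol{\ran T^*}=\calH_\calA$ gives $(T|\calH_\calA)^*=P_{\calH_\calA}T^*=T^*$), so that surjectivity onto $\calH_\calA$ is literally the statement $\ran T^*=\ol{\ran T^*}$. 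This buys you a proof of (i) that bypasses the closed range theorem entirely, at the cost of the small adjoint-of-a-restriction computation, which you correctly flag as the point to watch. Part (ii) in both of your variants matches the paper's argument.
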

\begin{proof}
Due to the above discussion, we have $T = T_\calA$, where $\calA:=(\frakE_i^*T)_{i\in I}$. Moreover, by the closed range theorem (see, e.g., \cite[Theorem IV-5.13]{k}), $\ran T$ is closed if and only if $\ran T^*$ is closed.

(i). 
By definition, $\calA$ is an operator-valued frame sequence if and only if $\hat T := T_\calA|\calH_\calA$ is bounded below. By Lemma \ref{l:op1} this is the case if and only if $\hat T$ is injective and $\ran\hat T$ is closed. By \eqref{e:TH}, $\hat T$ is always injective and $\ran\hat T = \ran T_\calA = \ran T$. Hence, $\calA$ is an operator-valued frame sequence if and only if $\ran T$ is closed.

(ii). 
By (i) and \eqref{e:TH}, $\calA$ is an operator-valued frame if and only if $\ran T_\calA$ is closed and $\ker T_\calA = \{0\}$. Due to Lemma \ref{l:op1}, this holds if and only if $T^* = T_\calA^*$ is surjective.
\end{proof}

\begin{rem}
{\rm (a)} The definition of the analysis operator in \cite{klz} slightly differs from the one we give here, since it is defined to be an operator in $\calB(\calH, \ell^2(I)\otimes\calK)$ (see \cite[Proposition 2.3]{klz}). However, it can be seen that the spaces  $\ell^2(I)\otimes\calK$ and $\frakK$ are isometrically isomorphic through the mapping $\Upsilon: \ell^2(I)\otimes\calK\to\frakK$, $\Upsilon(e_j\otimes z)=\frakE_j(z)$ where $(e_i)_{i\in I}$ is the standard basis of $\ell^2(I)$. Moreover, for the (analysis) operator  $\theta_\calA$ defined in \cite[Eq. (6)]{klz}, one has $\Upsilon\theta_\calA=T_\calA$. Here, we prefer to work with the analysis operator as defined in \eqref{analysis-op} since it is  more suitable for our purposes and also it is a natural extension of the  analysis operator associated to a vector frame. 

\smallskip
{\rm (b)} In \cite{s}, Sun introduced the slightly more general concept of $G$-frames. $G$-frames are sequences of operators $A_i\in\calB(\calH,\calK_i)$ between Hilbert spaces $\calH$ and $\calK_i$, satisfying \eqref{e:op-valued-frame} for every $x\in\calH$. In principle, $\calK_i$ could be different from $\calK_j$ for $i\neq j$. However, in \cite{s}, Sun pointed out that for any sequence of Hilbert spaces $\calK_i$ one can always find a Hilbert space $\calK$ containing all $\calK_i$, namely $\calK = \bigoplus_{i\in I}\calK_i$. In this sense, every $G$-frame is also an operator-valued frame. 
\end{rem}

\subsection{Vector Frames as Special Operator-valued Frames}\label{ss:compare}
Recall that a sequence (of vectors) $\Phi=(\vphi_i)_{i\in I}$ in a separable Hilbert space $\calH$ is called a {\em Bessel sequence} if there exists $\beta > 0$ such that
$$
\sum_{i\in I}|\<x,\vphi_i\>|^2\,\le\,\beta\|x\|^2\quad\text{for all }x\in\calH.
$$
The {\em analysis operator} $T_\Phi : \calH\to\ell^2(I)$ corresponding to a Bessel sequence $\Phi = (\vphi_i)_{i\in I}$ is defined by
$$
T_\Phi x := (\<x,\vphi_i\>)_{i\in I},\quad x\in\calH.
$$
It is well known that $T_\Phi$ is bounded with norm $\|T_\Phi\|\le \sqrt \beta$. The adjoint $T_\Phi^* : \ell^2(I)\to\calH$ of $T_\Phi$ is called the {\em synthesis operator} corresponding to $\Phi$ and it is given by 
$$
T_\Phi^*c = \sum_{i\in I}c_i\vphi_i,\quad c\in\ell^2(I).
$$
The operator $S_\Phi := T_\Phi^*T_\Phi$ is called the {\em frame operator} corresponding to $\Phi$ and it is a non-negative bounded selfadjoint operator.
A sequence $\Phi = (\vphi_i)_{i\in I}$ in $\calH$ is called a {\em frame sequence} in $\calH$ if there exist $\alpha,\beta  > 0$ such that
\begin{equation*}\label{e:frame}
\alpha\|x\|^2\,\le\,\sum_{i\in I}|\<x,\vphi_i\>|^2\,\le\,\beta \|x\|^2\quad\text{for all }x\in\calH_\Phi, 
\end{equation*}
where
$$
\calH_\Phi=\cls\{\vphi_i:\,i\in I\}.
$$ 
A frame sequence $\Phi$ in $\calH$ is called a {\em frame for $\calH$} if $\calH_\Phi = \calH$. Consequently, a vector sequence is a frame sequence if and only if it is a frame for its closed linear span.

Given a Bessel sequence $\Phi=(\vphi_i)_{i\in I}$ in $\calH$, for every $i\in I$ we define an operator $A_i\in \calB(\calH, \C)$ by $A_ix:=\<x,\vphi_i\>$, $x\in\calH$. Thus, it is clear that $\calA = (A_i)_{i\in I}$ is an operator-valued Bessel sequence with Bessel bound $\beta$. Noticing that $A^*_ic = c\vphi_i$ for $c\in\C$ and that $\ell^2(I,\C) = \ell^2(I)$, we have that the analysis operator associated with $\calA$ coincides with the usual analysis operator corresponding to $\Phi$:
$$
T_\calA x=(A_ix)_{i\in I}=(\<x,\vphi_i\>)_{i\in I} = T_\Phi x,\quad x\in\calH.
$$
Consequently, we also have that $T_\calA^* = T_\Phi^*$ and $S_\calA = S_\Phi$. It is also clear that
$$
\calH_\calA = \cls\{\ran A^*_i:\,i\in I\} = \cls\{\vphi_i:\,i\in I\} = \calH_\Phi.
$$
Thus, the Bessel sequences in $\calH$ are exactly the $\calB(\calH,\C)$-valued Bessel sequences, so that operator-valued Bessel sequences naturally extend the notion of (vector) Bessel sequences. Evidently, an analogous correspondence holds for frame sequences (frames) and $\calB(\calH,\C)$-valued frame sequences (frames, respectively).

On the other hand, as it was noticed in \cite{klz}, an operator-valued frame sequence $\calA = (A_i)_{i\in I}\subset\calB(\calH,\calK)$ with $\dim\ran A_i=1$ for all $i\in I$ defines a (vector) frame sequence in $\calH$. Indeed, for each $i\in I$, let $e_i\in\calK$ be a unit vector such that $\ran A_i=\linspan\{e_i\}$. By the Riesz Representation Theorem, for every $i\in I$ there exists $\vphi_i\in\calH$ such that  $A_ix=\<x,\vphi_i\>e_i$ for all $x\in\calH$. Since  $A_i^*=\<\cdot, e_i\>\vphi_i$, we have that $\calH_\Phi=\calH_\calA$, where $\Phi=(\vphi_i)_{i\in I}$. Thus, \eqref{e:op-valued-frame} immediately yields that $\Phi$ is a frame sequence in $\calH$.

\subsection{Fusion Frames as Special Operator-valued Frames}\label{ss:fus_as_ov}
Let $(W_i)_{i\in I}$ be a sequence of closed subspaces of $\calH$ and $(c_i)_{i\in I}$ a sequence of non-negative real numbers such that for all $i\in I$ we have
\begin{equation}\label{e:spezial}
W_i = \{0\}\quad\Llra\quad c_i = 0.
\end{equation}
We shall call the sequence of pairs $((W_i,c_i))_{i\in I}$ a {\em fusion sequence}. A {\em fusion frame} for $\calH$ is a fusion sequence $((W_i,c_i))_{i\in I}$ for which there exist $\alpha,\beta > 0$ such that
$$
\alpha\|x\|^2\,\le\,\sum_{i\in I}c_i^2\|P_{W_i}x\|^2\,\le\,\beta\|x\|^2\quad\text{for all }x\in\calH.
$$
Fusion frames appeared for the first time in the literature in 2004 (cf.\ \cite{ck}) as ``frames of subspaces'' and were later renamed in \cite{ckl}. Obviously, a fusion sequence $\calW = ((W_i,c_i))_{i\in I}$ can be identified with the sequence of operators $\calA := (c_iP_{W_i})_{i\in I}$ which is a $\calB(\calH)$-valued frame for $\calH$ if and only if $\calW$ is a fusion frame for $\calH$. Therefore, the set of fusion frames for $\calH$ can be considered as a special (proper) subset of the $\calB(\calH)$-valued frames for $\calH$. We shall also call $\calW$ a {\em Bessel fusion sequence} ({\em fusion frame sequence}) whenever $\calA$ is a $\calB(\calH)$-valued Bessel sequence (frame sequence, resp.). The {\em analysis operator} and the {\em fusion frame operator} of the Bessel fusion sequence $\calW$ are then defined by $T_\calW := T_\calA$ and $S_\calW := S_\calA$, respectively. In accordance with \eqref{e:HPA}, we also define
$$
\calH_\calW := \cls\{W_i:\, i\in I\}\qquad\textrm{and}\qquad P_\calW:=P_{\calH_\calW}.
$$
At this point we would like to remark that in previous works (see, e.g., \cite{ck,ckl}) the analysis operator $T_\calW$ was not considered as an operator from $\calH$ to $\frakH = \ell^2(I,\calH)$ but from $\calH$ to $\bigoplus_{i\in I}W_i\subset\frakH$.

\vspace{.5cm}
\section{Duals of Operator-valued Frame Sequences and Fusion Frames}\label{sec:Duals}
In this section we describe and investigate the concept of duals of operator-valued frame sequences and provide a useful parametrization for the set of all duals of a given operator-valued frame sequence. As we shall see, operator-valued duals of fusion frames are in general not fusion frames. Therefore, we will discuss and compare the notions of duality for fusion frames existing in the literature and agree on the one that we  will consider in this paper.

\subsection{Duals of Operator-valued Frame Sequences}\label{ss:duals}
In the case of vector sequences there exist several notions of duals of frame sequences (cf.\ \cite{ce,hg} and also \cite{hkl}). In what follows, we define duals of operator-valued frame sequences, thereby generalizing the notion of the so-called Type I duals of vector frame sequences from \cite{hg}.

\begin{defn}\label{d:dual_ov}
Let $\calA = (A_i)_{i\in I}\subset\calB(\calH, \calK)$ be an operator-valued frame sequence and $\wt\calA = (\wt A_i)_{i\in I}\subset\calB(\calH,\calK)$ an operator-valued Bessel sequence. We say that $\wt\calA$ is a {\em dual operator-valued frame sequence} (or simply a {\em dual}\,) of $\calA$ if 
\begin{equation}\label{e:o-v-recform}
\calH_{\wt\calA}\subset\calH_\calA
\qquad\text{and}\qquad
\sum_{i\in I}\wt A_i^* A_ix = x\quad\text{for all }x\in\calH_\calA.
\end{equation}
\end{defn}

It is immediately seen that \eqref{e:o-v-recform} is equivalent to
$$
\ran T_{\wt\calA}^*\subset \calH_\calA
\qquad\text{and}\qquad
T_{\wt\calA}^*T_\calA = P_\calA.
$$
And as the above inclusion can equivalently be replaced by an equality, it follows from Lemma \ref{l:easy_op-valued-frames} that a dual is itself an operator-valued frame sequence.

By $\calD(\calA)$ we denote the set of all duals of the operator-valued frame sequence $\calA$ (which we identify with their analysis operators), that is,
$$
\calD(\calA) := \left\{T\in\calB(\calH,\frakK) : T^*T_\calA = P_\calA,\;\ran T^*\subset\calH_\calA\right\}.
$$
Among all duals of $\calA$ there is the so-called {\em canonical dual} which will play a special role in the sequel. It is defined by
\begin{equation}\label{e:canonical}
\left(A_i(S_\calA|\calH_\calA)^{-1}P_\calA\right)_{i\in I}.
\end{equation}
It is easily seen that this is indeed a dual of $\calA$ and that its analysis operator is given by
$$
T_\calA(S_\calA|\calH_\calA)^{-1}P_\calA.
$$
In analogy to the vector frame sequence case, we call the remaining duals of $\calA$ {\em alternate duals}.

The following lemma provides a characterization of the duals of $\calA$ in terms of their analysis operators. It can be viewed as an operator theoretical variant of a classical result in \cite{Li95} (see also \cite{c}).

\begin{lem}\label{l:o-v-dual_charac}
Let $\calA = (A_i)_{i\in I}\subset\calB(\calH, \calK)$ be an operator-valued frame sequence. Then
$$
\calD(\calA) = \left\{(T_\calA(S_\calA|\calH_\calA)^{-1} + L)P_\calA \,:\, L\in\calB(\calH_\calA,\frakK),\;L^*T_\calA = 0\right\}.
$$
\end{lem}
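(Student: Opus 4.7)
The plan is to prove both inclusions by direct computation, based on two preliminary observations. First, since $\calH_\calA$ is closed, the condition $\ran T^*\subset\calH_\calA$ is equivalent to $\calH_\calA^\perp\subset\ker T$, hence to $T = TP_\calA$. Second, by Lemma \ref{l:ranges} one has $\ker T_\calA = \calH_\calA^\perp$, so $S_\calA = T_\calA^*T_\calA$ vanishes on $\calH_\calA^\perp$ and restricts to the boundedly invertible positive operator $S_\calA|\calH_\calA$ on $\calH_\calA$; consequently $(S_\calA|\calH_\calA)^{-1}S_\calA = P_\calA$ as operators on all of $\calH$. I will also use that $(S_\calA|\calH_\calA)^{-1}$ is self-adjoint on $\calH_\calA$ and that $\ran T_\calA^*\subset\calH_\calA$.

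For the inclusion $\supseteq$, given $L\in\calB(\calH_\calA,\frakK)$ with $L^*T_\calA = 0$, I set $T:=(T_\calA(S_\calA|\calH_\calA)^{-1} + L)P_\calA$. Since the operator in parentheses maps $\calH_\calA$ into $\frakK$, its adjoint is $T^* = P_\calA((S_\calA|\calH_\calA)^{-1}T_\calA^* + L^*)$, which clearly has range in $\calH_\calA$. Applying the preliminary observations, $T^*T_\calA = P_\calA(S_\calA|\calH_\calA)^{-1}S_\calA + P_\calA L^*T_\calA = P_\calA\cdot P_\calA + 0 = P_\calA$, so $T$ belongs to $\calD(\calA)$.

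For the converse inclusion, I take $T\in\calD(\calA)$ and set $L := T|\calH_\calA - T_\calA(S_\calA|\calH_\calA)^{-1}$, which lies in $\calB(\calH_\calA,\frakK)$. Using that the adjoint of $T|\calH_\calA : \calH_\calA\to\frakK$ is $P_\calA T^*$, I compute $L^*T_\calA = P_\calA T^*T_\calA - (S_\calA|\calH_\calA)^{-1}T_\calA^*T_\calA = P_\calA\cdot P_\calA - P_\calA = 0$. Finally, since $T = TP_\calA$ by the first preliminary observation,
$$(T_\calA(S_\calA|\calH_\calA)^{-1} + L)P_\calA = T_\calA(S_\calA|\calH_\calA)^{-1}P_\calA + (T|\calH_\calA)P_\calA - T_\calA(S_\calA|\calH_\calA)^{-1}P_\calA = TP_\calA = T,$$
which exhibits $T$ in the desired form.

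The only real delicacy is bookkeeping with domains: the operator $(S_\calA|\calH_\calA)^{-1}$ lives on $\calH_\calA$ rather than on $\calH$, and the adjoint of the restriction of an operator to a closed subspace differs from the original adjoint by composition with the corresponding projection. I do not anticipate a deeper obstacle beyond making these identifications carefully.
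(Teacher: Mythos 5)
Your proposal is correct and follows essentially the same route as the paper: the same direct verification for the inclusion $\supseteq$, and for the converse the same choice $L := T|\calH_\calA - T_\calA(S_\calA|\calH_\calA)^{-1}$ together with the observation that $\ran T^*\subset\calH_\calA$ forces $T = TP_\calA$. Your extra care with the adjoint of a restriction (i.e., $(T|\calH_\calA)^* = P_\calA T^*$) is exactly the bookkeeping the paper uses implicitly.
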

\begin{proof}
Let $T = (T_\calA(S_\calA|\calH_\calA)^{-1} + L)P_\calA$, where $L\in\calB(\calH_\calA,\frakK)$ is such that $L^*T_\calA = 0$. Then $T^* = (S_\calA|\calH_\calA)^{-1}T_\calA^* + L^*$, which implies $\ran T^*\subset\calH_\calA$, and
$$
T^*T_\calA = ((S_\calA|\calH_\calA)^{-1}T_\calA^* + L^*)T_\calA = (S_\calA|\calH_\calA)^{-1}T_\calA^*T_\calA = (S_\calA|\calH_\calA)^{-1}S_\calA = P_\calA,
$$
which proves $T\in\calD(\calA)$.

Conversely, let $T\in\calD(\calA)$, i.e.\ $\ran T^*\subset\calH_\calA$ and $T^*T_\calA = P_\calA$. Define the operator $L := (T|\calH_\calA) - T_\calA (S_\calA|\calH_\calA)^{-1}\in\calB(\calH_\calA,\frakK)$. Then we have
$$
L^*T_\calA = P_\calA T^*T_\calA - (S_\calA|\calH_\calA)^{-1}T_\calA^*T_\calA  = P_\calA  - P_\calA = 0,
$$
and $T|\calH_\calA = T_\calA(S_\calA|\calH_\calA)^{-1} + L$. Since $\ran T^*\subset\calH_\calA$ implies $\calH_\calA^\perp\subset\ker T$, we find that $T = (T_\calA(S_\calA|\calH_\calA)^{-1} + L)P_\calA$.
\end{proof}

In what follows we shall frequently make use of the following notation for an operator-valued frame sequence $\calA$:
$$
\calL_\calA := \left\{L\in\calB(\calH_\calA,\frakK) : T_\calA^*L = 0\right\}.
$$
It is obvious that $\calL_\calA$ is a closed linear subspace of $\calB(\calH_\calA,\frakK)$.

\begin{cor}
The set of duals $\calD(\calA)$ of a $\calB(\calH,\calK)$-valued frame sequence $\calA$ is a closed affine subspace of $\calB(\calH,\frakK)$.
\end{cor}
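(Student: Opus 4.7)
The corollary is essentially a direct consequence of Lemma \ref{l:o-v-dual_charac}, so the plan has two parts: extract the affine structure from the parametrization, then argue closedness separately.

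For the affine structure, I would simply unpack Lemma \ref{l:o-v-dual_charac}. Setting $T_c := T_\calA(S_\calA|\calH_\calA)^{-1}P_\calA$ (the canonical dual) and
\[
V := \bigl\{L P_\calA : L\in\calB(\calH_\calA,\frakK),\;L^*T_\calA = 0\bigr\},
\]
the lemma gives $\calD(\calA) = T_c + V$. Since $\calL_\calA = \{L\in\calB(\calH_\calA,\frakK) : T_\calA^*L = 0\}$ is a linear subspace of $\calB(\calH_\calA,\frakK)$ (the condition $L^*T_\calA = 0$ is equivalent to $T_\calA^*L = 0$) and the map $L\mapsto L P_\calA$ is linear, $V$ is a linear subspace of $\calB(\calH,\frakK)$. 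Hence $\calD(\calA)$ is an affine subspace.

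For closedness, the cleanest route is to return to the defining conditions
\[
\calD(\calA) = \bigl\{T\in\calB(\calH,\frakK) : T^*T_\calA = P_\calA,\;\ran T^*\subset\calH_\calA\bigr\}.
\]
The inclusion $\ran T^*\subset\calH_\calA$ is equivalent to $(I - P_\calA)T^* = 0$, i.e.\ $T(I - P_\calA) = 0$, so $\calD(\calA)$ is the common solution set of the two equations $T^*T_\calA = P_\calA$ and $T(I - P_\calA) = 0$. The maps $T\mapsto T^*T_\calA$ and $T\mapsto T(I - P_\calA)$ are norm-continuous on $\calB(\calH,\frakK)$ (composition with fixed bounded operators, together with the isometric involution $T\mapsto T^*$), so each equation cuts out a norm-closed set and their intersection is closed.

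There is no real obstacle here; the only thing to watch is to verify both features of the claim separately, since ``affine'' follows cleanly from Lemma \ref{l:o-v-dual_charac} while ``closed'' is most transparent via the original defining conditions rather than from closedness of $\calL_\calA$ (which would additionally require noting that $L\mapsto L P_\calA$ is an isometric embedding into $\calB(\calH,\frakK)$).
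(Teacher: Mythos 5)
Your proof is correct and follows essentially the same route as the paper, which likewise deduces the affine structure from Lemma \ref{l:o-v-dual_charac} by writing $\calD(\calA) = T_\calA(S_\calA|\calH_\calA)^{-1}P_\calA + \{LP_\calA : L\in\calL_\calA\}$. Your separate closedness argument via the two norm-continuous defining equations is a nice explicit supplement; the paper leaves this step implicit, relying on the previously noted closedness of $\calL_\calA$ together with the (unstated) fact that $L\mapsto LP_\calA$ is an isometric embedding, exactly the point you flag at the end.
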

\begin{proof}
By Lemma \ref{l:o-v-dual_charac}, we have $\calD(\calA) = T_\calA(S_\calA|\calH_\calA)^{-1}P_\calA + \calL_\calA'$, where $\calL_\calA' := \{LP_\calA : L\in\calL_\calA\}\subset\calB(\calH,\frakK)$.
\end{proof}

\begin{rem}\label{r:HS}
If $\calH$ is finite-dimensional, $I$ is finite, and $\calA = (A_i)_{i\in I}\subset\calB(\calH,\calK)$ is an operator-valued frame for $\calH$, we have
$$
\calL_\calA = \left\{L\in\calB(\calH,\frakK) : T_\calA^*L = 0\right\}.
$$
Note that $\frakK = \calK^{|I|}$ in this case. The space $\calL_\calA$ is then perpendicular to $T_\calA S_\calA^{-1}$ with respect to the Hilbert-Schmidt scalar product
$$
\<X,Y\>_{\rm HS} = \tr(Y^*X), \quad X,Y\in\calB(\calH,\frakK),
$$
since for $L\in\calL_\calA$ we have $\<T_\calA S_\calA^{-1},L\>_{\rm HS} = \tr(L^*T_\calA S_\calA^{-1}) = 0$. Moreover, it is easily seen that the orthogonal projection onto $\calL_\calA$ (with respect to the inner product $\<\cdot\,,\cdot\>_{\rm HS}$) is then given by
\begin{equation}\label{e:op_fd}
P_{\calL_\calA}X = P_{\ker T_\calA^*}X,\qquad X\in\calB(\calH,\frakK).
\end{equation}
However, these observations cannot be generalized to the infinite-dimensional situation since $T_\calA S_\calA^{-1}$ cannot be Hilbert-Schmidt in this case. Indeed, if $T_\calA S_\calA^{-1}$ was Hilbert-Schmidt, then the operator $(T_\calA S_\calA^{-1})^*T_\calA S_\calA^{-1} = S_\calA^{-1}$ would be compact, implying that $S_\calA$ is not bounded. A contradiction.
\end{rem}

If\label{n:dual} $L\in\calL_\calA$ we denote by $\wt\calA(L)$ the dual of $\calA$ with the analysis operator
\begin{equation}\label{eq:A_tilde}
T_{\wt\calA(L)} = \left(T_\calA(S_\calA|\calH_\calA)^{-1} + L\right)P_\calA.
\end{equation}
Thus, $\wt\calA(0)$ is the canonical dual. Note that the mapping $\calL_\calA\to\calD(\calA)$, $L\mapsto\wt\calA(L)$, is one-to-one and therefore parametrizes the duals of $\calA$.

\begin{rem}
We mention that each operator in $\calL_\calA$ can be written in the form $P_{\ker T_\calA^*}M$ with $M\in\calB(\calH_\calA,\frakK)$. Since $I - P_{\ker T_\calA^*} = P_{\ran T_\calA} = T_\calA(S_\calA|\calH_\calA)^{-1}T_\calA^*$, we have
\begin{align*}
\calD(\calA)
&= \left\{\left[T_\calA(S_\calA|\calH_\calA)^{-1} + (I - T_\calA(S_\calA|\calH_\calA)^{-1}T_\calA^*)M\right]P_\calA \,:\, M\in\calB(\calH_\calA,\frakK)\right\}\\
&= \left\{\left[ T_\calA(S_\calA|\calH_\calA)^{-1}\left(I - T_\calA^*M\right) + M \right]P_\calA \,:\, M\in\calB(\calH_\calA,\frakK)\right\}.
\end{align*}
Although this representation of the duals of $\calA$ is slightly more explicit, it does not provide a parametrization as $M\mapsto P_{\ker T_\calA^*}M$ is not one-to-one.
\end{rem}

\begin{cor}
If $\calA$ is an operator-valued frame sequence and $L\in\calL_\calA$ then the frame operator of $\wt\calA(L)$ is given by
$$
S_{\wt\calA(L)} = \left((S_\calA|\calH_\calA)^{-1} + L^*L\right)P_\calA.
$$
\end{cor}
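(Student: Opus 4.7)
The plan is to compute $S_{\wt\calA(L)} = T_{\wt\calA(L)}^*T_{\wt\calA(L)}$ directly from the formula \eqref{eq:A_tilde} for $T_{\wt\calA(L)}$, and then simplify using the two facts that characterize $\calL_\calA$ and the restriction $(S_\calA|\calH_\calA)^{-1}$.

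First I would take the adjoint of \eqref{eq:A_tilde}, giving
\[
T_{\wt\calA(L)}^* \,=\, P_\calA\left((S_\calA|\calH_\calA)^{-1}T_\calA^* + L^*\right),
\]
where I use that $P_\calA$ and $(S_\calA|\calH_\calA)^{-1}$ are self-adjoint (the latter is a positive self-adjoint operator on $\calH_\calA$). Then I would multiply out the product $T_{\wt\calA(L)}^*T_{\wt\calA(L)}$, producing four terms sandwiched between $P_\calA$ on both sides. The two mixed terms contain the factors $T_\calA^*L$ and $L^*T_\calA$, both of which vanish because $L\in\calL_\calA$ (and hence also $L^*T_\calA = (T_\calA^*L)^* = 0$). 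What remains is
\[
S_{\wt\calA(L)} \,=\, P_\calA\bigl((S_\calA|\calH_\calA)^{-1}T_\calA^*T_\calA(S_\calA|\calH_\calA)^{-1} + L^*L\bigr)P_\calA.
\]

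Next I would reduce the first summand. Since $T_\calA^*T_\calA = S_\calA$, and since $(S_\calA|\calH_\calA)^{-1}$ maps into $\calH_\calA$, the inner factor $T_\calA^*T_\calA$ acts as $S_\calA|\calH_\calA$ on the range of $(S_\calA|\calH_\calA)^{-1}$, so the middle term collapses to $(S_\calA|\calH_\calA)^{-1}$. This gives
\[
S_{\wt\calA(L)} \,=\, P_\calA\bigl((S_\calA|\calH_\calA)^{-1} + L^*L\bigr)P_\calA.
\]
Finally, to strip off the left-hand $P_\calA$, I would observe that both $(S_\calA|\calH_\calA)^{-1}$ and $L^*L$ (with $L^*\in\calB(\frakK,\calH_\calA)$) have range contained in $\calH_\calA$, so the left projection acts as the identity on them. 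This yields the claimed formula.

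I do not expect any real obstacle: the computation is mechanical once one is careful about the domains. The only subtle point is the bookkeeping of the projection $P_\calA$ — making sure it appears on the right (because $(S_\calA|\calH_\calA)^{-1}$ is only defined on $\calH_\calA$, so $(S_\calA|\calH_\calA)^{-1}P_\calA$ is the correct extension to $\calH$) and disappears on the left (because everything inside the brackets has range in $\calH_\calA$). After that, the cancellation of the cross terms via $T_\calA^*L = 0$ is the essential use of the hypothesis $L\in\calL_\calA$.
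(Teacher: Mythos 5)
Your proposal is correct and follows essentially the same route as the paper: expand $T_{\wt\calA(L)}^*T_{\wt\calA(L)}$, kill the cross terms via $T_\calA^*L = L^*T_\calA = 0$, collapse $(S_\calA|\calH_\calA)^{-1}T_\calA^*T_\calA(S_\calA|\calH_\calA)^{-1}$ to $(S_\calA|\calH_\calA)^{-1}$, and drop the left-hand $P_\calA$ because both remaining summands have range in $\calH_\calA$. No gaps.
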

\begin{proof}
As $L^*T_\calA = 0$, and thus also $T_\calA^*L = 0$, we have
\begin{align*}
S_{\wt\calA(L)}
&= \big((T_\calA(S_\calA|\calH_\calA)^{-1} + L)P_\calA\big)^*(T_\calA(S_\calA|\calH_\calA)^{-1} + L)P_\calA\\
&= P_\calA\big((S_\calA|\calH_\calA)^{-1}T_\calA^* + L^*\big)(T_\calA(S_\calA|\calH_\calA)^{-1} + L)P_\calA\\
&= P_\calA\left[(S_\calA|\calH_\calA)^{-1}T_\calA^*T_\calA(S_\calA|\calH_\calA)^{-1} + L^*L\right]P_\calA\\
&= P_\calA\left((S_\calA|\calH_\calA)^{-1} + L^*L\right)P_\calA\\
&= ((S_\calA|\calH_\calA)^{-1} + L^*L)P_\calA,
\end{align*}
which proves the claim.
\end{proof}

\begin{rem}\label{r:bound-for-dual}
The frame operator of the canonical dual $\wt \calA= \wt\calA(0)$ of $\calA$ is given by $(S_\calA|\calH_\calA)^{-1}P_\calA$.  
This implies that if $0<\alpha\leq\beta$ are frame bounds for $\calA$, then $0<\beta^{-1}\leq\alpha^{-1}$ are frame bounds for $\wt\calA$. In particular, this last fact gives
\begin{equation}\label{e:la_tormenta}
\left\|T_\calA(S_\calA|\calH_\calA)^{-1}P_\calA\right\|\,\le\,\frac{1}{\sqrt\alpha},
\end{equation}
which we will use frequently below.
\end{rem}

In Subsection \ref{ss:fus_as_ov}, it was shown that fusion frames can be regarded as special operator-valued frames. However, as the next proposition shows, the operator-valued duals of fusion frames might themselves not correspond to fusion frames. We shall say that two operator-valued Bessel sequences $\calA$ and $\calB$ are orthogonal if $\calH_\calA\perp\calH_\calB$.

\begin{prop}\label{p:dualff}
For a fusion frame sequence $\calW = ((W_i,c_i))_{i\in I}$ in $\calH$ the following statements are equivalent:
\begin{enumerate}
\item[{\rm (i)}]  The canonical dual of $(c_iP_{W_i})_{i\in I}$ is a fusion frame sequence.
\item[{\rm (ii)}] $\calW$ is a union of mutually orthogonal tight fusion frame sequences.
\end{enumerate}
\end{prop}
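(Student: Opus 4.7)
The plan is to analyse the canonical dual of $\calA := (c_i P_{W_i})_{i \in I}$, whose $i$-th operator is $B_i := c_i P_{W_i} T$ with $T := (S_\calA|\calH_\calA)^{-1} P_\calA$. Since the canonical dual is automatically a frame sequence (by Lemma \ref{l:easy_op-valued-frames}), assertion (i) is equivalent to saying that each $B_i$ is a non-negative multiple of an orthogonal projection: $B_i = d_i P_{V_i}$ for some closed $V_i \subset \calH$ and $d_i \ge 0$ (with $V_i = \{0\} \Leftrightarrow d_i = 0$).

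For (i) $\Rightarrow$ (ii), I first note that $T$ is self-adjoint, so taking adjoints in $B_i = d_i P_{V_i}$ yields $P_{W_i} T = T P_{W_i}$ (trivially if $c_i = 0$, otherwise after cancelling $c_i > 0$). Since $\ran B_i \subset W_i$ forces $V_i \subset W_i$, and since $T$ is injective on $\calH_\calA \supset W_i$, I would then derive $W_i \cap V_i^\perp = \{0\}$ from the identity $B_i|_{W_i} = c_i T|_{W_i} = d_i P_{V_i}|_{W_i}$, hence $V_i = W_i$, and consequently $T|_{W_i} = (d_i/c_i) I_{W_i}$ whenever $c_i > 0$. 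Thus every $W_i$ with $c_i > 0$ lies in the eigenspace $E_{\lambda_i}$ of $S_\calA|\calH_\calA$ for the eigenvalue $\lambda_i := c_i/d_i$. Grouping indices by the value of $\lambda_i$, $I_\lambda := \{i : c_i > 0,\;\lambda_i = \lambda\}$, I set $\calW_\lambda := ((W_i,c_i))_{i \in I_\lambda}$. The subspaces $\calH_{\calW_\lambda} \subset E_\lambda$ are pairwise orthogonal because eigenspaces of the self-adjoint operator $S_\calA|\calH_\calA$ corresponding to distinct eigenvalues are. Restricting $S_\calA = \sum_i c_i^2 P_{W_i}$ to $\calH_{\calW_\lambda}$, only the terms with $i \in I_\lambda$ survive (since $W_j \perp \calH_{\calW_\lambda}$ for $j \notin I_\lambda$), and $S_\calA$ acts as $\lambda I$ on $\calH_{\calW_\lambda}$, so each $\calW_\lambda$ is $\lambda$-tight. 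Indices with $c_i = 0$ (for which $W_i = \{0\}$) can be placed into any of these pieces without effect.

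For (ii) $\Rightarrow$ (i), write $\calW = \bigcup_k \calW_k$ with $\calW_k$ being $\alpha_k$-tight and the $\calH_{\calW_k}$ pairwise orthogonal, so that $\calH_\calA = \bigoplus_k \calH_{\calW_k}$ and $S_\calA|\calH_{\calW_k} = \alpha_k I$ (the cross-terms vanishing by orthogonality). Hence $T = (S_\calA|\calH_\calA)^{-1} P_\calA$ acts as $\alpha_k^{-1} I$ on $\calH_{\calW_k}$ and as $0$ on $\calH_\calA^\perp$. For $i$ in the $k(i)$-th block, $W_i \subset \calH_{\calW_{k(i)}}$ is orthogonal to all other $\calH_{\calW_l}$, so $B_i = c_i P_{W_i} T = c_i \alpha_{k(i)}^{-1} P_{W_i}$, which is a weighted orthogonal projection. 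The canonical dual is therefore the fusion sequence $((W_i, c_i \alpha_{k(i)}^{-1}))_{i \in I}$, and being the canonical dual of a frame sequence, it is automatically a fusion frame sequence.

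The principal obstacle is in (i) $\Rightarrow$ (ii): from the weak hypothesis $c_i P_{W_i} T = d_i P_{V_i}$ one must extract the two-fold conclusion $V_i = W_i$ \emph{and} $T|_{W_i} = (d_i/c_i) I_{W_i}$. The commutation $P_{W_i} T = T P_{W_i}$ together with the injectivity of $T$ on $\calH_\calA$ is what makes both conclusions available simultaneously. Once this eigenvalue structure of each $W_i$ is in hand, the partition into mutually orthogonal tight pieces is a bookkeeping step driven by the spectral decomposition of $S_\calA|\calH_\calA$.
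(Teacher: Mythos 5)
Your proposal is correct and follows essentially the same route as the paper's proof: both reduce (i) to the condition that each $W_i$ lies in an eigenspace of $S_\calW$ (the paper gets there by noting $\ran\bigl(P_{W_i}(S_\calW|\calH_\calW)^{-1}\bigr) = W_i$ and adjoining $P_{W_i}(S_\calW|\calH_\calW)^{-1} = d_iP_{W_i}|\calH_\calW$, you via self-adjointness, commutation, and injectivity of $(S_\calA|\calH_\calA)^{-1}$), and then both partition the index set by eigenvalue and verify tightness and mutual orthogonality using the spectral decomposition. The converse direction is likewise the same computation in both versions.
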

\begin{proof}
The canonical dual of $(c_iP_{W_i})_{i\in I}$ is given by $(c_iP_{W_i}(S_\calW|\calH_\calW)^{-1}P_\calW)_{i\in I}$ (cf.\ \eqref{e:canonical}). It obviously corresponds to a fusion frame sequence if and only if for each $i\in I$ the operator $P_{W_i}(S_\calW|\calH_\calW)^{-1}$ coincides with a positive multiple of an orthogonal projection in $\calH_\calW$. As the range of this operator coincides with $W_i$, (i) is satisfied if and only if for each $i\in I$ there exists $d_i > 0$ such that 
$P_{W_i}(S_\calW|\calH_\calW)^{-1} = d_iP_{W_i}|\calH_\calW$, and, by adjunction,
$$
(S_\calW|\calH_\calW)^{-1}(P_{W_i}|\calH_\calW) = d_i (P_{W_i}|\calH_\calW)\quad\forall i\in I.
$$
This is equivalent to
\begin{equation}\label{e:a}
S_\calW P_{W_i} = d_i^{-1}P_{W_i}\quad\forall i\in I.
\end{equation}
Hence, (i) holds if and only if there exists $(d_i)_{i\in I}\subset(0,\infty)$ such that \eqref{e:a} holds.

(i)$\Sra$(ii). Let $(d_i)_{i\in I}\subset(0,\infty)$ be as in \eqref{e:a}. Define an equivalence relation $\sim$ on $I$ by $i_1\sim i_2\,:\Llra\;d_{i_1} = d_{i_2}$, $i_1,i_2\in I$. Let $J\subset I$ be a set of representatives of all the cosets in $I/\sim$, and for $j\in J$ put $I_j := [j]_\sim$. Then, $(I_j)_{j\in J}$ is a partition of $I$. For $j\in J$ we further define $\la_j := d_i^{-1}$ if $i\in I_j$ and $V_j := \cls\{W_i : i\in I_j\}$. Then \eqref{e:a} implies that $V_j\subset \ker (S_\calW -\la_j Id)$, and since eigenspaces of  self-adjoint operators are mutually orthogonal, we have that $V_j\perp V_k$ for $j\neq k$, $j,k\in J$. Hence, $((W_i,c_i))_{i\in I_j}$ and $((W_i,c_i))_{i\in I_k}$ are orthogonal for $j\neq k$, $j,k\in J$. It remains to show that for each $j\in J$ the sequence $\calW_j:=((W_i,c_i))_{i\in I_j}$ is a tight fusion frame sequence. For this, let $j\in J$ and $x\in V_j$. Then the tightness is seen by
\begin{align*}
S_{\calW_j}x = \sum_{i\in I_j}c_i^2P_{W_i}x = \sum_{k\in J}\sum_{i\in I_k}c_i^2P_{W_i}x = \sum_{i\in I}c_i^2P_{W_i}x = S_\calW x = \la_j x,
\end{align*}
where in the second equality we use that $(V_j)_{j\in J}$ are mutually orthogonal. 

(ii)$\Sra$(i). Due to (ii), there exist a partition $I = \bigcup_{j\in J} I_j$ of $I$ and $(\alpha_j)_{j\in J}\subset (0,\infty)$ such that $\calW_j := ((W_i,c_i))_{i\in I_j}$ is an $\alpha_j$-tight fusion frame sequence for each $j\in J$ and $\calW_j$ and $\calW_k$ are orthogonal for $j\neq k$. Put $V_j := \calH_{\calW_j}$. Then, by the tightness of the $\calW_j$, for $x\in\calH$ we have
$$
S_\calW x = \sum_{j\in J}\sum_{i\in I_j}c_i^2P_{W_i}x = \sum_{j\in J}S_{\calW_j}x = \sum_{j\in J}\alpha_jP_{V_j}x.
$$
For $i\in I$, let $j(i)\in J$ be such that $i\in I_{j(i)}$. Then the mutual orthogonality of the $\calW_j$ gives
$$
S_\calW P_{W_i} = \sum_{j\in J}\alpha_jP_{V_j}P_{W_i} = 
\alpha_{j(i)}P_{W_i},
$$
which is \eqref{e:a} for $d_i=\alpha_{j(i)}^{-1}$.
\end{proof}

\begin{rem}
Note that {\rm (ii)} in Proposition \rmref{p:dualff} implies that each $W_i$ is a subspace of some eigenspace of $S_\calW$ (cf.\ \eqref{e:a}).
\end{rem}

\subsection{Duals of Fusion Frames}\label{subsec:DualFF}
Since their introduction in 2004 (see \cite{ck}), fusion frames have been extensively studied. However, there have only been two approaches yet to define duals of fusion frames (see \cite{g,hmbz}, and also \cite{hm} for the finite-dimensional case). In fact, this is a delicate task since it quickly turns out that one has to give up on certain analogues to the vector frames case. 

The first proposal for what a dual fusion frame should be  was made by P.\ G\u avru\c ta in \cite{g}. He calls a Bessel Fusion sequence $\calV = ((V_i,d_i))_{i\in I}$ a dual of a fusion frame $\calW = ((W_i,c_i))_{i\in I}$ if
$$
\sum_{i\in I}c_id_iP_{V_i}S_\calW^{-1}P_{W_i}x = x\quad\text{for all }x\in\calH.
$$
Here, we shall call these fusion sequences {\em G\u avru\c ta duals}. Although it is pro\-ven in \cite{g} that a G\u avru\c ta dual $\calV$ of $\calW$ is itself a fusion frame, it is in general not true that, conversely, $\calW$ is a G\u avru\c ta dual of $\calV$. A simple counterexample is the following:
$$
\calH = \C^2,\quad\calW = ((\linspan\{e_i\},1))_{i=1}^2,\quad\calV = ((\C^2,1))_{i=1}^2.
$$
In fact, we have $S_\calW = I$ and $S_\calV = 2I$, and thus
$$
\sum_{i=1}^2c_id_iP_{V_i}S_\calW^{-1}P_{W_i} = I,
\quad\text{whereas}\quad
\sum_{i=1}^2c_id_iP_{W_i}S_\calV^{-1}P_{V_i} = \frac 1 2 I.
$$

Recently, in \cite{hmbz} (see also \cite{hm}), a Bessel fusion sequence $\calV = ((V_i,d_i))_{i\in I}$ was called a dual fusion frame of $\calW = ((W_i,c_i))_{i\in I}$ if there exists a bounded operator
$$
Q : \bigoplus_{i\in I}W_i\to\bigoplus_{i\in I}V_i
$$
such that
\begin{equation}\label{e:shit}
x = \sum_{i\in I}d_i\big(Q (c_jP_{W_j}x)_{j\in I}\big)_i,\quad x\in\calH.
\end{equation}
However, given a fusion frame for $\calH$, {\em every} fusion frame for $\calH$ is a corresponding dual fusion frame in this sense. Indeed, let $\calW = ((W_i,c_i))_{i\in I}$ and $\calV = ((V_i,d_i))_{i\in I}$ be fusion frames for $\calH$. Define
$$
Q := (T_\calV S_\calV^{-1}S_\calW^{-1}T_\calW^*)\Big|{\bigoplus_{i\in I}W_i}.
$$
Note that $\ran Q\subset\bigoplus_{i\in I}V_i$ so that $Q$ can be seen as an operator in $\calB(\bigoplus_{i}W_i,\bigoplus_{i}V_i)$. Now, for $x\in\calH$ we have (note that $(c_jP_{W_j}x)_{j\in I} = T_\calW x$)
\begin{align*}
\sum_{i\in I}d_i\big(Q (c_jP_{W_j}x)_{j\in I}\big)_i
&= \sum_{i\in I}d_i\big(T_\calV S_\calV^{-1}S_\calW^{-1}T_\calW^*T_\calW x\big)_i = \sum_{i\in I} d_i\left(T_\calV S_\calV^{-1}x\right)_i\\
&= \sum_{i\in I}d_i^2 P_{V_i} S_\calV^{-1} x = S_\calV^{-1}\sum_{i\in I}d_i^2 P_{V_i}x = S_\calV^{-1}S_\calV x = x.
\end{align*}
Thus, $\calV$ is a dual of $\calW$ in the sense of \cite{hmbz}.

This shows that there is too much freedom in the choice of the operator $Q$ in this definition and it seems reasonable to work with particular classes of operators $Q$. In fact, in \cite{hm,hmbz} the classes of ``component-preserving'' and ``block-diagonal'' duals were considered (the latter only in the finite-dimensional case) which arise from allowing only diagonal operators in the above definition. In this paper, we will consider normalized diagonal operators $Q$. In what follows, we shall explain and justify our choice in detail.

First, we find that the reconstruction formula \eqref{e:shit} -- in this general form -- seems to be of hardly any use in applications. However, if one restricts the set of ``admissible'' operators $Q$ to diagonal operators $Q = \diag(Q_i)_{i\in I}$ with $Q_i\in\calB(W_i,V_i)$ for each $i\in I$, formula \eqref{e:shit} becomes considerably simpler (cf.\ \cite{hm}):
\begin{equation*}
x = \sum_{i\in I}c_id_iQ_iP_{W_i}x,\quad x\in\calH,
\end{equation*}
Since here we prefer to work with $\frakH = \ell^2(I,\calH)$ instead of $\bigoplus_{i\in I} W_i$ and $\bigoplus_{i\in I} V_i$, we allow $Q_i\in\calB(\calH)$ for each $i\in I$ and ask for the validity of
\begin{equation}\label{e:recform_fus}
x = \sum_{i\in I}c_id_iP_{V_i}Q_iP_{W_i}x,\quad x\in\calH.
\end{equation}
Since the weights $(d_i)_{i\in I}$ are somewhat arbitrary in this version (if $(Q_i)_{i\in I}$ and $(d_i)_{i\in I}$ satisfy \eqref{e:recform_fus}, also $(\alpha_iQ_i)_{i\in I}$ and $(\alpha_i^{-1}d_i)_{i\in I}$ do for every bounded positive sequence $(\alpha_i)_{i\in I}$), we shall furthermore require that $\|Q_i\| = 1$ for each $i\in I$. Moreover, since in \eqref{e:recform_fus} only the action of $Q_i$ on $W_i$ is important and, in addition, only that part being further mapped to $V_i$, we shall also require that $W_i^\perp\subset\ker Q_i$ and $\ran Q_i\subset V_i$. Then \eqref{e:recform_fus} reduces to
\begin{equation}\label{e:recform_fus_reduce}
x = \sum_{i\in I}c_id_iQ_ix,\quad x\in\calH,
\end{equation}
since in this case we have $Q_i = P_{V_i}Q_i = Q_iP_{W_i}$.

For two fusion sequences $\calV = ((V_i,d_i))_{i\in I}$ and $\calW = ((W_i,c_i))_{i\in I}$ in $\calH$ we define (see \eqref{e:spezial})
$$
I_0(\calV,\calW) := \{i\in I : V_i = \{0\} \text{ or }W_i = \{0\}\} = \{i\in I : c_i = 0 \text{ or }d_i = 0\}.
$$

\begin{defn}\label{d:dual_fus}
Let $\calW = ((W_i,c_i))_{i\in I}$ be a fusion frame for $\calH$. A Bessel fusion sequence $\calV = ((V_i,d_i))_{i\in I}$ will be called a {\em dual fusion frame} {\rm (}or a {\em fusion frame dual} or, shortly, a {\em FF-dual}\,{\rm )} of $\calW$ if there exists a sequence $(Q_i)_{i\in I}\subset\calB(\calH)$ satisfying
\begin{equation}\label{e:require}
W_i^\perp\subset\ker Q_i, \quad\ran Q_i\subset V_i,\quad\text{and }\;\|Q_i\|=1\text{ if }i\notin I_0(\calV,\calW)
\end{equation}
for each $i\in I$ such that \eqref{e:recform_fus_reduce} holds.
\end{defn}

\begin{rem}\label{r:ffdual}
{\rm (a)} The first condition in \eqref{e:require} yields $\ran Q_i = Q_iW_i$. Hence, the second means $Q_iW_i\subset V_i$. If $V_i = \{0\}$ or $W_i = \{0\}$ the first two conditions in \eqref{e:require} imply $Q_i = 0$.

\smallskip
{\rm (b)} A dual fusion frame $((V_i,d_i))_{i\in I}$ as defined in \cite{hmbz} was called {\em component-preserving} if there exists a bounded sequence $(Q_i)_{i\in I}\subset\calB(\calH)$ satisfying \eqref{e:recform_fus} and $Q_iW_i = V_i$ \braces{and not only $Q_iW_i\subset V_i$} for each $i\in I$. Here, we shall not further study this subclass. If $((V_i,d_i))_{i\in I}$ is a fusion sequence in the finite-dimensional Hilbert space $\calH$ satisfying Definition \rmref{d:dual_fus} \braces{without the normalization condition}, then it is a {\em block-diagonal dual fusion frame} as defined in \cite{hm}.

\smallskip
{\rm (c)} If $\calV = ((V_i,d_i))_{i\in I}$ is a G\u avru\c ta dual of $\calW = ((W_i,c_i))_{i\in I}$ with the property that $P_{V_i}S_\calW^{-1}P_{W_i} = 0\,\Lra\,i\in I_0(\calV,\calW)$ for every $i\in I$, then the fusion sequence $((V_i,\|P_{V_i}S_\calW^{-1}P_{W_i}\|d_i))_{i\in I}$ is a fusion frame dual of $\calW$. Indeed, if for $i\in I$ we put $A_i := P_{V_i}S_\calW^{-1}P_{W_i}$ as well as $Q_i := A_i/\|A_i\|$ if $A_i\neq 0$ and $Q_i := 0$ otherwise, then $(Q_i)_{i\in I}$ satisfies \eqref{e:require}, and for $x\in\calH$ we have $x = \sum_{i\in I}c_id_iA_ix = \sum_{i\in I}c_i(\|A_i\|d_i)Q_ix$. See also \cite[Example 6.1]{hm}.
\end{rem}

After having presented the notions of duality from \cite{g,hmbz,hm} and the one in Definition \ref{d:dual_fus}, it seems convenient to us to enumerate a few desiderata which duals of a fusion frame $\calW = ((W_i,c_i))_{i\in I}$ should satisfy and then see how the different proposals stated above perform with respect to them:
\begin{enumerate}
\item[{\bf (D1)}] They should allow for reconstructing signals $x\in\calH$ from their fusion frame measurements $c_iP_{W_i}x$, $i\in I$.
\item[{\bf (D2)}] They should properly generalize the notion of dual (vector) frames, that is:
   \begin{enumerate}
   \item[(D2a)] If $(\psi_i)_{i\in I}$ is a dual of the frame $(\vphi_i)_{i\in I}$ for $\calH$, then $((\linspan\{\psi_i\},\|\psi_i\|))_{i\in I}$ is a dual fusion frame of $((\linspan\{\vphi_i\},\|\vphi_i\|))_{i\in I}$.
   \item[(D2b)] If $\dim W_i\in\{0,1\}$ for each $i\in I$ and $\calV = ((V_i,d_i))_{i\in I}$ is a dual fusion frame of $\calW$ with $\dim V_i\in\{0,1\}$ for each $i\in I$, then there exist vectors $\vphi_i\in W_i$ and $\psi_i\in V_i$ with $\|\vphi_i\| = c_i$ and $\|\psi_i\| = d_i$, $i\in I$, such that $(\psi_i)_{i\in I}$ is a dual frame of $(\vphi_i)_{i\in I}$.
   \end{enumerate}
\item[{\bf (D3)}] If $\calV$ is a dual fusion frame of $\calW$ then it should itself be a fusion frame and also $\calW$ should be a dual fusion frame of $\calV$.
\item[{\bf (D4)}] The fusion sequence $((S_\calW^{-1}W_i,c_i\|S_\calW^{-1}|W_i\|))_{i\in I}$ should be a dual fusion frame of $\calW$ (the canonical dual).
\end{enumerate}
Whereas {\bf (D1)}--{\bf (D3)} are evident requirements, the choice of the weights of the canonical dual fusion frame in {\bf (D4)} might not be clear a priori. To explain our choice, consider the canonical dual $(S_\Phi^{-1}\vphi_i)_{i\in I}$ of a (vector) frame $\Phi = (\vphi_i)_{i\in I}$. Translated to the fusion frame setting, we have $W_i = \linspan\{\vphi_i\}$ and $c_i = \|\vphi_i\|$ as well as $V_i = \linspan\{S_\Phi^{-1}\vphi_i\} = S_\Phi^{-1}W_i$ and $d_i = \|S_\Phi^{-1}\vphi_i\|$. Thus, if $c_i\neq 0$, the weights of the canonical dual are $d_i = c_i\|S_\Phi^{-1}(\vphi_i/\|\vphi_i\|)\| = c_i\|S_\Phi^{-1}|W_i\|$. The same trivially holds for $c_i = 0$.

As we have shown in the beginning of this subsection, G\u avru\c ta duals do not satisfy desideratum {\bf (D3)}. For completeness, we mention that if one might want to directly generalize the duality definition for (operator-valued) frames by requiring that $T_\calV^* T_\calW = I$ for a dual $\calV$ of $\calW$,  this definition would violate {\bf (D4)}. A simple example for this is given by
$$
\calH = \C^2,\; W_1 = \linspan\{e_1\},\;W_2 = \linspan\{e_1+e_2\},\;c_1 = c_2 = 1,
$$
where $\{e_1, e_2\}$ is the canonical standard basis of $\C^2$.

As it turns out, all definitions of fusion frame duals from \cite{hmbz,hm} as well as Definition \ref{d:dual_fus} satisfy the desiderata {\bf (D1)}--{\bf (D4)}. Indeed, this follows essentially from \cite[Sections 3 and 4]{hm} and \cite[Section 3]{hmbz}. However, for the convenience of the reader, we prove this claim for the FF-duals from Definition \ref{d:dual_fus}.

\begin{prop}\label{p:desiderata}
The desiderata {\bf (D1)}--{\bf (D4)} are satisfied for the notion of fusion frame duals defined as in Definition {\rm\ref{d:dual_fus}}.
\end{prop}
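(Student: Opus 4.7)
The plan is to verify each desideratum by exhibiting appropriate operators $(Q_i)_{i\in I}$ satisfying Definition~\ref{d:dual_fus} and by combining algebraic manipulations of the reconstruction formula \eqref{e:recform_fus_reduce} with Cauchy--Schwarz and the Bessel/frame bounds.

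For \textbf{(D1)}, the conditions in \eqref{e:require} yield $Q_i = P_{V_i}Q_iP_{W_i}$, so \eqref{e:recform_fus_reduce} immediately rewrites as $x = \sum_{i\in I} d_i P_{V_i}Q_i(c_iP_{W_i}x)$, which is the desired reconstruction from the measurements $(c_iP_{W_i}x)_{i\in I}$. For \textbf{(D2a)}, given dual vector frames $(\vphi_i)_{i\in I}$ and $(\psi_i)_{i\in I}$, I take the rank-one operators $Q_ix := \<x,\vphi_i/\|\vphi_i\|\>\,\psi_i/\|\psi_i\|$ whenever $\vphi_i,\psi_i\neq 0$ and $Q_i:=0$ otherwise; a quick check shows the range, kernel and norm conditions, and $c_id_iQ_ix = \<x,\vphi_i\>\psi_i$, so summing reproduces \eqref{e:recform_fus_reduce} from the vector dual-frame identity. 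Conversely, under the hypotheses of \textbf{(D2b)} with $\dim W_i,\dim V_i\le 1$, the three constraints in \eqref{e:require} force $Q_ix = \alpha_i\<x,e_i^W\>e_i^V$ for unit vectors $e_i^W\in W_i$, $e_i^V\in V_i$ and $|\alpha_i|=1$ (for $i\notin I_0(\calV,\calW)$); setting $\vphi_i := c_ie_i^W$ and $\psi_i := \alpha_id_ie_i^V$ (with trivial choices on $I_0(\calV,\calW)$) produces $c_id_iQ_ix = \<x,\vphi_i\>\psi_i$, reducing \eqref{e:recform_fus_reduce} to the required dual-frame identity.

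For \textbf{(D3)}, since $\calV$ is Bessel by Definition~\ref{d:dual_fus}, only the lower fusion frame bound is missing. Pairing \eqref{e:recform_fus_reduce} with $x$ and applying Cauchy--Schwarz twice, using $\|Q_i\|\le 1$ and $Q_i = P_{V_i}Q_iP_{W_i}$, gives
\[
\|x\|^2 \,\le\, \Big(\sum_i c_i^2\|P_{W_i}x\|^2\Big)^{1/2}\Big(\sum_i d_i^2\|P_{V_i}x\|^2\Big)^{1/2} \,\le\, \sqrt{\beta}\,\|x\|\,\Big(\sum_i d_i^2\|P_{V_i}x\|^2\Big)^{1/2},
\]
where $\beta$ is an upper fusion frame bound of $\calW$; hence $\calV$ is a fusion frame with lower bound $1/\beta$. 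To see that $\calW$ is a FF-dual of $\calV$, I set $R_i := Q_i^*$; the range, kernel and norm requirements transfer symmetrically with the roles of $W_i$ and $V_i$ swapped, and taking adjoints in \eqref{e:recform_fus_reduce} yields $\sum_i c_id_iR_iy = y$ weakly. A tail estimate of the same Cauchy--Schwarz type, this time invoking the Bessel bound of $\calW$ and that of the just-established fusion frame $\calV$, upgrades the weak identity to norm convergence.

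For \textbf{(D4)}, with $\gamma_i := \|S_\calW^{-1}|W_i\|$, $V_i := S_\calW^{-1}W_i$, $d_i := c_i\gamma_i$, I define $Q_i := \gamma_i^{-1}S_\calW^{-1}P_{W_i}$ when $W_i\neq\{0\}$ and $Q_i := 0$ otherwise. Then $\ker Q_i = W_i^\perp$, $\ran Q_i = V_i$ and, using $\|S_\calW^{-1}P_{W_i}\| = \|S_\calW^{-1}|W_i\| = \gamma_i$, also $\|Q_i\| = 1$. The identity $\sum_i c_i^2P_{W_i} = S_\calW$ then yields $\sum_i c_id_iQ_ix = S_\calW^{-1}S_\calW x = x$. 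The main obstacle is to verify that this canonical $\calV$ is itself Bessel; I plan to handle this by estimating $\sum_i c_i^2\gamma_i^2\|P_{V_i}x\|^2$ via the bound $\gamma_i \le 1/\alpha$ (with $\alpha$ a lower fusion frame bound of $\calW$) together with the relation $V_i = S_\calW^{-1}W_i$ and the fusion frame property of $\calW$. This is the only genuinely analytic step; the remaining desiderata reduce to direct algebraic reformulations of \eqref{e:recform_fus_reduce}.
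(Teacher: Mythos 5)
Your proposal is essentially correct and reaches all four desiderata, but it takes a genuinely different route from the paper in two places. For \textbf{(D3)}, the paper packages the reconstruction formula as the single operator identity $T_\calV^*QT_\calW = I$ with $Q = \diag(Q_i)_{i\in I}$ (bounded since $\sup_i\|Q_i\|\le 1$): surjectivity of $T_\calV^*$ gives the fusion frame property via Lemma \ref{l:easy_op-valued-frames}, and taking adjoints gives $T_\calW^*Q^*T_\calV = I$, i.e.\ the reverse duality, in one stroke. Your elementary double Cauchy--Schwarz argument is a valid substitute and even buys something the paper's argument does not state, namely the explicit lower bound $1/\beta$; your adjoint-plus-tail-estimate upgrade from weak to norm convergence for the reverse duality is also sound, since the tails $\bigl\|\sum_{i\in F}c_id_iQ_i^*y\bigr\|\le\sqrt{\beta}\bigl(\sum_{i\in F}d_i^2\|P_{V_i}y\|^2\bigr)^{1/2}$ are controlled by the Bessel bound of $\calV$. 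For \textbf{(D1)}, \textbf{(D2a)} and \textbf{(D2b)} you do essentially what the paper does (in (D2b) you should still record, as the paper does, that $(\psi_i)_{i\in I}$ is Bessel, which is immediate from $|\<x,\psi_i\>|^2 = d_i^2\|P_{V_i}x\|^2$ and the Bessel bound of $\calV$).

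The one point where your plan does not yet close is the Bessel property of the canonical dual in \textbf{(D4)}. Knowing $\gamma_i=\|S_\calW^{-1}|W_i\|\le 1/\alpha$ and $V_i=S_\calW^{-1}W_i$ is not by itself enough to estimate $\sum_i c_i^2\gamma_i^2\|P_{S_\calW^{-1}W_i}x\|^2$, because $\|P_{S_\calW^{-1}W_i}x\|$ is not directly comparable to $\|P_{W_i}x\|$; you need an inequality relating the projection onto the transformed subspace to the original one, namely
$$
\|P_{AW}x\|\,\le\,\|A^{-1}\|\,\|P_WA^*x\|
$$
for boundedly invertible $A$ (this is \eqref{e:PAW} in Lemma \ref{l:R_W}, and is the content of G\u avru\c ta's theorem reproved as Corollary \ref{c:gavruta_new}). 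With $A=S_\calW^{-1}$ it gives $\|P_{V_i}x\|\le\beta\|P_{W_i}S_\calW^{-1}x\|$ and hence $\sum_i d_i^2\|P_{V_i}x\|^2\le\beta^3\alpha^{-4}\|x\|^2$, closing the argument. The paper sidesteps this entirely by citing \cite{g} for the fact that $((S_\calW^{-1}W_i,c_i))_{i\in I}$ is a G\u avru\c ta dual (in particular a fusion frame) and then invoking Remark \ref{r:ffdual}(c) to renormalize the weights; your explicit construction of the $Q_i$ is equivalent but must supply this projection-comparison inequality to finish.
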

\begin{proof}
It is clear that the definition satisfies {\bf (D1)} (see \eqref{e:recform_fus}). Moreover, \eqref{e:recform_fus} is equivalent to $T_\calV^*QT_\calW = I$, where $Q = \bigoplus_{i\in I}Q_i$. This identity yields both $\ran T_\calV^* = \calH$ and $T_\calW^*Q^*T_\calV = I$. Therefore, $\calV$ is a fusion frame for $\calH$ (cf.\ Lemma \ref{l:easy_op-valued-frames}), and $\calW$ is a FF-dual of $\calV$, meaning that {\bf (D3)} is satisfied. To prove that {\bf (D4)} holds, we note that $((S_\calW^{-1}W_i,c_i))_{i\in I}$ is a G\u avru\c ta dual of $\calW$ by \cite{g}. It has the property in Remark \ref{r:ffdual} (c). Hence, $((S_\calW^{-1}W_i,c_i\|S_\calW^{-1}|W_i\|))_{i\in I}$ is a FF-dual of $\calW$ since $\|P_{S_\calW^{-1}W_i}S_\calW^{-1}P_{W_i}\| = \|S_\calW^{-1}P_{W_i}\| = \|S_\calW^{-1}|W_i\|$.

Let us see that also {\bf (D2)} is satisfied. For this, let $(\psi_i)_{i\in I}$ be a dual of the frame $(\vphi_i)_{i\in I}$ for $\calH$ as in (D2a). For $i\in I$, we put $W_i := \linspan\{\vphi_i\}$, $V_i := \linspan\{\psi_i\}$, and
$$
Q_ix :=
\begin{cases}
0 & \text{ if $\vphi_i = 0$ or $\psi_i = 0$}\\
\left\<x,\frac{\vphi_i}{\|\vphi_i\|}\right\>\frac{\psi_i}{\|\psi_i\|} & \text{ otherwise}
\end{cases},\qquad x\in\calH.
$$
Then $(Q_i)_{i\in I}$ satisfies \eqref{e:require}. Moreover, we have 
\begin{align*}
\sum_{i\in I}\|\vphi_i\|\|\psi_i\|Q_ix = \sum_{i\in I,\,\psi_i\neq 0,\vphi_i\neq 0}\|\vphi_i\|\|\psi_i\|\left\<x,\frac{\vphi_i}{\|\vphi_i\|}\right\>\frac{\psi_i}{\|\psi_i\|} = \sum_{i\in I}\<x,\vphi_i\>\psi_i = x.
\end{align*}
Hence, $((V_i,\|\psi_i\|))_{i\in I}$ is a FF-dual of $((W_i,\|\vphi_i\|))_{i\in I}$, as desired in (D2a).

For (D2b), let $(Q_i)_{i\in I}\subset\calB(\calH)$ be a sequence as in Definition \ref{d:dual_fus}. Choose $\vphi_i\in W_i$ with $\|\vphi_i\| = c_i$, $i\in I$. We put $\psi_i := c_i^{-1}d_iQ_i\vphi_i$ if $c_i\neq 0$. If $c_i = 0$ we choose an arbitrary $\psi_i\in V_i$ with $\|\psi_i\| = d_i$. Let us see that $\|\psi_i\| = d_i$ for each $i\in I$. This is clear if $c_i = 0$. Let $c_i\neq 0$. If $Q_i\neq 0$ then $\|Q_i\| = 1$ and hence $\|Q_i\vphi_i\| = c_i$, i.e., $\|\psi_i\| = d_i$. If $Q_i = 0$ then $i\in I_0(\calV,\calW)$, implying that $V_i = \{0\}$ as $c_i\neq 0$ yields $W_i\neq\{0\}$. But then $d_i = 0$ and thus $\|\psi_i\| = 0 = d_i$. For arbitrary $x\in\calH$ we have
$$
\sum_{i\in I}|\<x,\psi_i\>|^2 = \sum_{d_i\neq 0}|\<x,\psi_i\>|^2 = \sum_{d_i\neq 0}d_i^2\|P_{V_i}x\|^2 = \sum_{i\in I}d_i^2\|P_{V_i}x\|^2 = \|T_\calV x\|^2.
$$
This implies that $(\psi_i)_{i\in I}$ is a Bessel sequence in $\calH$. Finally,
\begin{align*}
\sum_{i\in I}\<x,\vphi_i\>\psi_i
&= \sum_{c_i\neq 0}c_i^{-1}d_i\<x,\vphi_i\>Q_i\vphi_i = \sum_{c_i\neq 0}c_i^{-1}d_iQ_i\big(\<x,\vphi_i\>\vphi_i\big)\\
&= \sum_{c_i\neq 0}c_id_iQ_iP_{W_i}x = \sum_{i\in I}c_id_iQ_ix = x.
\end{align*}
Hence, $(\psi_i)_{i\in I}$ is a dual frame of $(\vphi_i)_{i\in I}$.
\end{proof}

The following theorem provides a characterization of fusion frame duals. Note that implication (iii)$\Sra$(i) is essentially \cite[Lemma 3.5]{hmbz}.

\begin{thm}\label{t:charac_dual_fus}
Let $\calW = ((W_i,c_i))_{i\in I}$ be a fusion frame for $\calH$ and let $\calV = ((V_i,d_i))_{i\in I}$ be a Bessel fusion sequence. Then the following statements are equivalent:
\begin{enumerate}
\item[{\rm (i)}]   $\calV$ is a fusion frame dual of $\calW$.
\item[{\rm (ii)}]  There exists a sequence $(Q_i)_{i\in I}\subset\calB(\calH)$ satisfying \eqref{e:require} such that $(d_iQ_i^*)_{i\in I}$ is a $\calB(\calH)$-valued dual of $(c_iP_{W_i})_{i\in I}$.
\item[{\rm (iii)}] There exists a Bessel sequence $\calL = (L_i)_{i\in I}\subset\calB(\calH)$ with $T_\calL^*T_\calW = 0$ such that for the operators $A_i := (c_iS_\calW^{-1} + L_i^*)P_{W_i}$, $i\in I$, we have $\ran A_i\subset V_i$ and, if $i\notin I_0(\calV,\calW)$, $\|A_i\| = d_i$.
\end{enumerate}
\end{thm}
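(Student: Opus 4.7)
My plan is to go around the cycle (i)$\Sra$(ii)$\Sra$(iii)$\Sra$(i), viewing everything through the $\calB(\calH)$-valued frame $\calA := (c_iP_{W_i})_{i\in I}$, for which $\calH_\calA = \calH$ and $S_\calA = S_\calW$. The central observation is that $(d_iQ_i^*)_{i\in I}$ is the natural candidate for a $\calB(\calH)$-valued dual of $\calA$, and the parametrization of such duals from Lemma \ref{l:o-v-dual_charac} produces precisely the formula for $A_i$ in (iii).

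For (i)$\Llra$(ii), I would simply unfold Definition \ref{d:dual_fus}. Given $(Q_i)$ satisfying \eqref{e:require}, the relation $Q_i = Q_iP_{W_i}$ lets me rewrite \eqref{e:recform_fus_reduce} as $\sum_{i\in I}(d_iQ_i^*)^*(c_iP_{W_i}) = I$, which is exactly $T_{(d_iQ_i^*)}^*T_\calA = P_\calA$ from Definition \ref{d:dual_ov}. The inclusion $\calH_{(d_iQ_i^*)}\subset\calH_\calA = \calH$ is automatic, while $\ran Q_i\subset V_i$ yields $\|Q_i^*x\|\le\|P_{V_i}x\|$, so the Bessel property transfers from $\calV$ to $(d_iQ_i^*)$.

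For (ii)$\Llra$(iii), Lemma \ref{l:o-v-dual_charac} applied to $\calA$ shows that the $i$-th operator of any $\calB(\calH)$-valued dual of $\calA$ has the form $c_iP_{W_i}S_\calW^{-1} + M_i$ for some Bessel sequence $(M_i)\subset\calB(\calH)$ satisfying $\sum_{i\in I}c_iP_{W_i}M_i = 0$. Writing $d_iQ_i^* = c_iP_{W_i}S_\calW^{-1} + M_i$, the condition $Q_i^* = P_{W_i}Q_i^*$ forces $M_i = P_{W_i}M_i$, and after taking adjoints
\begin{equation*}
d_iQ_i = (c_iS_\calW^{-1} + M_i^*)P_{W_i}.
\end{equation*}
Setting $L_i := M_i$ identifies $A_i = d_iQ_i$; the requirement $T_\calL^*T_\calW = 0$ becomes, by adjunction, exactly $\sum c_iP_{W_i}M_i = 0$; and the clauses $\ran A_i\subset V_i$, $\|A_i\| = d_i$ (for $i\notin I_0(\calV,\calW)$) encode $\ran Q_i\subset V_i$ and $\|Q_i\|=1$. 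For the reverse direction, I would set $Q_i := d_i^{-1}A_i$ for $i\notin I_0(\calV,\calW)$ and $Q_i := 0$ otherwise, verify \eqref{e:require}, and read off the reconstruction as
\begin{equation*}
\sum_{i\in I}c_id_iQ_i = \sum_{i\in I}c_iA_i = S_\calW^{-1}\sum_{i\in I}c_i^2P_{W_i} + \sum_{i\in I}c_iL_i^*P_{W_i} = I + 0 = I,
\end{equation*}
the second sum vanishing by $T_\calL^*T_\calW = 0$.

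The main point of care is the bookkeeping around adjoints and the exceptional index set $I_0(\calV,\calW)$. The formula for $A_i$ in (iii) carries $L_i^*$, whereas Lemma \ref{l:o-v-dual_charac} naturally produces parameters $M_i$ on the non-adjoint side, so one must consistently set $L_i = M_i$ and pass between $\sum c_iP_{W_i}M_i = 0$ and $T_\calL^*T_\calW = 0$ by adjunction. For indices $i\in I_0(\calV,\calW)$, either $c_i = 0$ (so $P_{W_i} = 0$ and $A_i = 0$) or $d_i = 0$ (so $V_i = \{0\}$ and $\ran A_i\subset V_i$ forces $A_i = 0$), hence these indices carry no normalization requirement but must be handled separately when defining the $Q_i$ in the direction (iii)$\Sra$(ii).
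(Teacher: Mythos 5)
Your proposal is correct and takes essentially the same route as the paper: both arguments pass through the parametrization of operator-valued duals in Lemma \ref{l:o-v-dual_charac} applied to $(c_iP_{W_i})_{i\in I}$ to identify $A_i = d_iQ_i$, and both close the cycle by setting $Q_i := d_i^{-1}A_i$ (and $Q_i := 0$ on the exceptional indices) and reading off the reconstruction from $T_\calL^*T_\calW = 0$. The only cosmetic difference is that you phrase the orthogonality constraint in its adjoint form $\sum_{i\in I}c_iP_{W_i}M_i = 0$ rather than directly as $L^*T_\calW = 0$, which is equivalent.
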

\begin{proof}
(i)$\Sra$(ii). Let $(Q_i)_{i\in I}\subset\calB(\calH)$ be as in Definition \ref{d:dual_fus}. Then this sequence satisfies \eqref{e:require} and $x = \sum_{i\in I}c_id_iQ_ix$ for all $x\in\calH$. Since $Q_i = P_{V_i}Q_i$ and $\|Q_i\|\le 1$ for each $i\in I$, we have for $x\in\calH$:
$$
\sum_{i\in I}\|d_iQ_i^*x\|^2 = \sum_{i\in I}d_i^2\|Q_i^*P_{V_i}x\|^2 \,\le\, \sum_{i\in I}d_i^2\|P_{V_i}x\|^2 = \|T_\calV x\|^2,
$$
which shows that $\calB := (d_iQ_i^*)_{i\in I}$ is a Bessel sequence. Moreover,
$$
T_\calB^*T_\calW x = \sum_{i\in I}d_iQ_i(c_iP_{W_i}x) = \sum_{i\in I}c_id_iQ_ix = x
$$
for $x\in\calH$. This proves (ii).

(ii)$\Sra$(iii). As before, put $\calB := (d_iQ_i^*)_{i\in I}$. By Lemma \ref{l:o-v-dual_charac}, there exists some $L\in\calB(\calH,\frakH)$ with $L^*T_\calW = 0$ such that $T_\calB = T_\calW S_\calW^{-1} + L$. Put $L_i := \frakE_i^*L$, $i\in I$. Then $\calL := (L_i)_{i\in I}$ is a $\calB(\calH)$-valued Bessel sequence with $T_\calL = L$. From $T_\calB = T_\calW S_\calW^{-1} + L$ we conclude that $d_iQ_i^* = c_iP_{W_i}S_\calW^{-1} + L_i$ for $i\in I$, that is, $d_iQ_i = c_iS_\calW^{-1}P_{W_i} + L_i^*$. And since $Q_i = Q_iP_{W_i}$, we obtain $d_iQ_i = (c_iS_\calW^{-1} + L_i^*)P_{W_i} = A_i$, $i\in I$. Therefore, $\ran A_i\subset V_i$ and $\|A_i\| = d_i$ if $i\notin I_0(\calV,\calW)$.

(iii)$\Sra$(i). For $i\in I$, define $Q_i := d_i^{-1}A_i$ if $d_i\neq 0$ and $Q_i := 0$ otherwise. Then $\ran Q_i\subset V_i$ and $W_i^\perp\subset\ker Q_i$, $i\in I$. If $d_i\neq 0$ then $\|Q_i\| = 1$ for $i\notin I_0(\calV,\calW)$. If $d_i = 0$ then $V_i = \{0\}$, i.e., $i\in I_0(\calV,\calW)$. Hence, $(Q_i)_{i\in I}$ satisfies \eqref{e:require}. Moreover, for $x\in\calH$ we have
\begin{align*}
\sum_{i\in I}c_id_iQ_ix
&= \sum_{d_i\neq 0}c_iA_ix = \sum_{i\in I}c_iA_ix = \sum_{i\in I}\left(c_i^2S_\calW^{-1}P_{W_i}x + c_iL_i^*P_{W_i}x\right)\\
&= S_\calW^{-1}S_\calW x + T_\calL^*T_\calW x = x,
\end{align*}
and (i) follows.
\end{proof}

As desired in {\bf (D4)} and proved in Proposition \ref{p:desiderata}, given a fusion frame $\calW=((W_i,c_i))_{i\in I}$, the Bessel fusion sequence $((S_\calW^{-1}W_i,c_i\|S_\calW^{-1}|W_i\|))_{i\in I}$ is always a FF-dual of $\calW$ (and therefore itself a fusion frame). In analogy with the vector frame setting, we call
\begin{equation}\label{e:canff}
\widetilde{\calW} := \left((S_\calW^{-1}W_i, c_i\|S_\calW^{-1}|W_i\|)\right)_{i\in I}
\end{equation}
the {\it canonical} fusion frame dual of $\calW=((W_i,c_i))_{i\in I}$. If $\ran T_\calW = W := \bigoplus_{i\in I}W_i$ then there exist no other FF-duals than extensions of the canonical one. Indeed, in this case, $T_\calL^*|W = 0$ in condition (iii) of Theorem \ref{t:charac_dual_fus}, and so $L_i^*|W_i = 0$ for each $i\in I$. It is an open question whether in all remaining cases there always exist FF-duals other than the canonical one or extensions of it. In \cite{hmbz} this was shown to be true for a very special class of harmonic fusion frames. In the finite-dimensional situation, the question can be answered in the affirmative, as proven in \cite[Prop. 3.9]{hm}. This fact can be also deduced from the next corollary, which is an immediate consequence of Theorem \ref{t:charac_dual_fus}.

\begin{cor}
Let $I$ be finite, and let $\calW = ((W_i,c_i))_{i\in I}$ be a fusion frame for the finite-dimensional Hilbert space $\calH$. Then for each $L\in\calB(\calH,\frakH)$ with $\ran L\subset\ker T_\calW^*$, the sequence $((\ran A_i,\|A_i\|))_{i\in I}$, where
$$
A_i := (c_iS_\calW^{-1} + L^*\frakE_i)P_{W_i},\quad i\in I,
$$
is a fusion frame dual of $\calW$.
\end{cor}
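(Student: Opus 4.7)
The plan is to apply the implication (iii)$\Sra$(i) of Theorem \ref{t:charac_dual_fus}. To that end, I first need to turn the operator $L\in\calB(\calH,\frakH)$ into a $\calB(\calH)$-valued Bessel sequence. Set $L_i:=\frakE_i^*L\in\calB(\calH)$ for each $i\in I$. Because $I$ is finite and $\calH$ is finite-dimensional, $(L_i)_{i\in I}$ is automatically a Bessel sequence, and the computation $T_\calL x=(L_ix)_{i\in I}=(\frakE_i^*Lx)_{i\in I}=Lx$ shows that $T_\calL=L$, so in particular $L_i^*=L^*\frakE_i$. Hence the operators $A_i$ in the statement coincide with $A_i=(c_iS_\calW^{-1}+L_i^*)P_{W_i}$, which is exactly the form appearing in condition (iii) of Theorem \ref{t:charac_dual_fus}.

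Next I need to verify the orthogonality relation $T_\calL^*T_\calW=0$. This is equivalent to $\ran T_\calW\subset\ker L^*$, and, taking orthogonal complements in the finite-dimensional space $\frakH$, equivalent to $\ran L\subset(\ran T_\calW)^\perp=\ker T_\calW^*$, which is precisely the hypothesis. Thus the abstract Bessel sequence $\calL=(L_i)_{i\in I}$ fulfills the compatibility condition required in Theorem \ref{t:charac_dual_fus}(iii).

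It remains to set up the candidate dual fusion sequence: define $V_i:=\ran A_i$ and $d_i:=\|A_i\|$, and put $\calV:=((V_i,d_i))_{i\in I}$. Note that $V_i=\{0\}$ if and only if $A_i=0$ if and only if $d_i=0$, so \eqref{e:spezial} holds and $\calV$ is a bona fide fusion sequence; since $I$ is finite and $\dim\calH<\infty$, it is trivially a Bessel fusion sequence. By construction $\ran A_i\subset V_i$, and the normalization $\|A_i\|=d_i$ holds for every $i\in I$, in particular for $i\notin I_0(\calV,\calW)$. All three bullet points of Theorem \ref{t:charac_dual_fus}(iii) are therefore satisfied, and the theorem yields that $\calV$ is a fusion frame dual of $\calW$, which is the assertion.

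There is no real obstacle here: the argument is essentially a bookkeeping exercise translating the operator $L$ into the language of Theorem \ref{t:charac_dual_fus}. The only point that might momentarily look delicate is the identification $L_i^*=L^*\frakE_i$, but it follows at once from $(\frakE_i^*L)^*=L^*\frakE_i$. The finiteness assumptions on $I$ and $\calH$ are used only to guarantee that $\calV$ is automatically Bessel; with extra boundedness assumptions on $L$ the same proof would work in the infinite-dimensional setting.
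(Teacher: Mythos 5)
Your argument is correct and follows exactly the route the paper intends: the corollary is stated there as an immediate consequence of Theorem \ref{t:charac_dual_fus}, and you supply precisely the bookkeeping (setting $L_i=\frakE_i^*L$ so that $T_\calL=L$, translating $\ran L\subset\ker T_\calW^*$ into $T_\calL^*T_\calW=0$, and taking $V_i=\ran A_i$, $d_i=\|A_i\|$) needed to invoke the implication (iii)$\Rightarrow$(i). No gaps.
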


\section{Perturbations of Operator-valued Frame Sequences}\label{sec:Perturbations}
In this section, we prove that -- in some sense and under certain conditions -- duals of operator-valued  frames and frame sequences are stable under small perturbations. In our results we utilize the following notion of $\mu$-perturbation.

\begin{defn}\label{def:perturbation}
Let $\mu > 0$, and let $\calA$ and $\calB$ be two $\calB(\calH,\calK)$-valued Bessel sequences. We say that $\calB$ is a {\em $\mu$-perturbation} of $\calA$ {\rm (}and vice versa{\rm )} if
$$
\|T_{\calA} - T_{\calB}\|\,\le\,\mu.
$$
\end{defn}

\begin{rem}
{\rm (a)} We mention that the notion of $\mu$-perturbation is a special case of the perturbations of Paley-Wiener type which have been considered in, e.g., \cite[Thm. 15.1.1]{c} or in \cite{CC97}. 

\smallskip
{\rm (b)} The term $\mu$-perturbation as such was originally introduced in \cite{hmp} for vector sequences which might not be Bessel sequences. However, if two vector Bessel sequences are $\mu$-perturbations of one another in the sense of \cite{hmp}, this means that the difference of their synthesis operators has a norm which does not exceed $\mu$. Therefore, in the case $\calK = \C$, the above definition coincides with the one in \cite{hmp} \braces{for Bessel sequences}. Furthermore, we mention that $\|T_\calA - T_\calB\|\le\mu$ implies $\|A_i - B_i\|\le\mu$ for every $i\in I$ since
$$
\|A_i - B_i\| = \|\frakE_i^*T_\calA - \frakE_i^*T_\calB\|\le\|T_\calA - T_\calB\|\le\mu.
$$

\smallskip
{\rm (c)} Note that $(\calA,\calB)\mapsto\|T_\calA - T_\calB\| = \|T_{\calA - \calB}\|$ is the distance induced by the norm $\|\calC\| = \|T_\calC\|$ on $\frakB(I,\calH,\calK)$, see page \pageref{page:Besselnorm}.
\end{rem}

In order to treat perturbations of operator-valued frame sequences, we will make use of the notion of the gap between subspaces of a Hilbert space.

\subsection{Perturbations and the Gap Between Subspaces}
For two closed subspaces $V$ and $W$ of $\calH$ the {\em gap from $V$ to $W$} is defined by
$$
\delta(V,W) := \sup\left\{\|v - P_Wv\| : v\in V,\,\|v\|=1\right\} = \left\|(I - P_W)|V\right\| = \|P_{W^\perp}|V\|.
$$
We remark that in \cite{hmp}, $\delta(V,W)$ was called the gap {\em between} $V$ and $W$. Here, we agree to follow, e.g., \cite{ckkl}, and choose a different term in order to emphasize the order of $V$ and $W$ in $\delta(V,W)$. It is worth noting that
\begin{equation}\label{e:perp}
\delta(W^\perp,V^\perp) = \|P_V|W^\perp\| = \|(P_{W^\perp}|V)^*\| = \|P_{W^\perp}|V\| = \delta(V,W).
\end{equation}
Instead of the gap, some authors prefer to work with the {\em infimum cosine angle} $R(V,W)$ from $V$ to $W$ which is given by
$$
R(V,W) := \inf\left\{\|P_Wv\| : v\in V,\,\|v\|=1\right\}.
$$
It is easy to see that
\begin{equation}\label{e:transfer}
\delta(V,W) = \sqrt{1 - R(V,W)^2}.
\end{equation}
As $\delta$ is not a metric, Kato (see \cite[\paragraf IV.2]{k}) defines the {\em gap between $V$ and $W$} by
\begin{equation}\label{e:maxgap}
\Delta(V,W) := \max\left\{\delta(V,W),\delta(W,V)\right\},
\end{equation}
and shows that
$$
\Delta(V,W) = \|P_V - P_W\|.
$$
The next lemma is well known (see, e.g., \cite{bhkl,ckkl} or \cite[Theorem I-6.34]{k}). For the sake of completeness, we provide a short proof here.

\begin{lem}\label{l:gap}
The following statements hold.
\begin{enumerate}
\item[{\rm (i)}]  If $\delta(V,W) < 1$, then $V\cap W^\perp = \{0\}$, and $P_W|V\in\calB(V,W)$ is bounded below.
\item[{\rm (ii)}] If $\Delta(V,W) < 1$ then $\delta(V,W) = \delta(W,V)$, and the operators $P_W|V\in\calB(V,W)$ and $P_V|W\in\calB(W,V)$ are isomorphisms.
\end{enumerate}
\end{lem}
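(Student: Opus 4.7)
The plan is to prove (i) by a direct Pythagorean computation, then bootstrap to (ii) using the fact that $P_W|V$ and $P_V|W$ are mutual adjoints between the Hilbert spaces $V$ and $W$. For (i), for any $v \in V$ I would write $\|v\|^2 = \|P_W v\|^2 + \|(I - P_W)v\|^2$, and by definition of the gap $\|(I - P_W)v\| \le \delta(V,W)\|v\|$, so $\|P_W v\|^2 \ge (1 - \delta(V,W)^2)\|v\|^2$. Since $\delta(V,W) < 1$, this shows $P_W|V$ is bounded below by $\sqrt{1 - \delta(V,W)^2}$, and in particular injective, so $V \cap W^\perp = \ker(P_W|V) = \{0\}$.

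For (ii), the hypothesis $\Delta(V,W) < 1$ yields both $\delta(V,W) < 1$ and $\delta(W,V) < 1$, so by (i) the operators $P_W|V \in \calB(V,W)$ and $P_V|W \in \calB(W,V)$ are both bounded below, hence injective with closed range by Lemma \ref{l:op1}. To upgrade this to surjectivity, I would argue by orthogonal complements: if $w \in W$ were orthogonal in $W$ to $\ran(P_W|V)$, then $\langle v, w\rangle = \langle P_W v, w\rangle = 0$ for every $v \in V$, placing $w$ in $W \cap V^\perp$, which is trivial by (i) with the roles of $V$ and $W$ swapped. The analogous argument handles $P_V|W$, so both restrictions are isomorphisms.

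For the equality $\delta(V,W) = \delta(W,V)$, the key observation is that, viewed as operators between the Hilbert spaces $V$ and $W$, the maps $P_W|V$ and $P_V|W$ are mutual adjoints, since $\langle P_W v, w\rangle = \langle v, w\rangle = \langle v, P_V w\rangle$ for all $v \in V$, $w \in W$. Being isomorphisms, their inverses are also mutual adjoints, so they share norms: $\|(P_W|V)^{-1}\| = \|(P_V|W)^{-1}\|$; since $\|(P_W|V)^{-1}\|^{-1} = \inf\{\|P_W v\|/\|v\| : v \in V \setminus \{0\}\} = R(V,W)$ and analogously for the other side, this gives $R(V,W) = R(W,V)$, and \eqref{e:transfer} yields the desired equality of gaps. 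The step I expect to be the main obstacle is precisely the surjectivity in (ii): being bounded below only delivers a closed range, so one genuinely needs the two-sided hypothesis $\Delta(V,W) < 1$ to invoke (i) in the reverse direction and rule out a nonzero orthogonal complement of the range.
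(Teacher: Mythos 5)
Your proposal is correct and follows essentially the same route as the paper: part (i) is the identity $\delta(V,W)^2 + R(V,W)^2 = 1$ (which the paper cites as \eqref{e:transfer} and you re-derive via Pythagoras), and part (ii) rests on the observation that $P_V|W = (P_W|V)^*$, with bijectivity obtained from bounded-belowness of both operators and the norm identity $\|(P_W|V)^{-1}\| = \|(P_V|W)^{-1}\|$ giving $R(V,W) = R(W,V)$. The only cosmetic difference is that you establish surjectivity by combining closed range with triviality of the orthogonal complement $W \cap V^\perp$, where the paper simply invokes Lemma \ref{l:op1}; both arguments are the same fact.
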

\begin{proof}
(i). From \eqref{e:transfer}, we see that $R(V,W) > 0$ which implies that the operator $P_W|V$ is bounded below. In particular, $V\cap W^\perp = \ker(P_W|V) = \{0\}$.

(ii). By (i), $P_W|V$ and $P_V|W$ are bounded below. And since $P_V|W = (P_W|V)^*$, we conclude from Lemma \ref{l:op1} that these operators are bijective. Finally,
$$
R(V,W)^{-1} = \|(P_W|V)^{-1}\| = \|((P_W|V)^*)^{-1}\| = \|(P_V|W)^{-1}\| = R(W,V)^{-1}
$$
proves $\delta(V,W) = \delta(W,V)$.
\end{proof}

The following well known lemma can be traced back to the paper \cite{m}. However, for the sake of self-containedness we provide a proof here. It is based on the fact that $\delta(V,W) = \sup\{\dist(v,W) : v\in V,\,\|v\|=1\}$ which easily follows from the definition.

\begin{lem}\label{l:gappyhelpy}
Let $X,Y$ be Hilbert spaces, $T,S\in\calB(X,Y)$, and assume that there exists $c > 0$ such that $\|Tx\|\ge c\|x\|$ for each $x\in(\ker T)^\perp$. Then $\ran T$ is closed and
$$
\delta\left(\ran T,\ol{\ran S}\right)\,\le\,\frac{\|T - S\|}c.
$$
\end{lem}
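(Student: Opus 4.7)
The plan is to first establish that $\ran T$ is closed and then exploit the hint $\delta(V,W) = \sup\{\dist(v,W) : v\in V,\ \|v\|=1\}$ together with the minimum-norm preimage construction.

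For the closedness of $\ran T$, I would apply Lemma \ref{l:op1} to the restricted operator $\hat T := T|(\ker T)^\perp \in \calB((\ker T)^\perp, Y)$. By hypothesis, $\hat T$ is bounded below by $c$, so Lemma \ref{l:op1} guarantees that $\ran \hat T$ is closed. But $\ran \hat T = T((\ker T)^\perp) = \ran T$ (since $T$ vanishes on $\ker T$), so $\ran T$ is closed.

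For the gap estimate, I would start from the characterization
$$
\delta\bigl(\ran T,\ol{\ran S}\bigr) = \sup\bigl\{\dist(y,\ol{\ran S}) : y\in\ran T,\ \|y\|=1\bigr\}
$$
mentioned in the statement. Fix $y\in\ran T$ with $\|y\|=1$. Since $\ran T = \ran\hat T$, there is a unique $x\in(\ker T)^\perp$ with $Tx = y$, and the lower bound hypothesis yields
$$
\|x\| \,\le\, \frac{\|Tx\|}{c} \,=\, \frac{1}{c}.
$$
Because $Sx \in \ran S \subset \ol{\ran S}$, we obtain
$$
\dist\bigl(y,\ol{\ran S}\bigr) \,\le\, \|y - Sx\| \,=\, \|(T-S)x\| \,\le\, \|T-S\|\cdot\|x\| \,\le\, \frac{\|T-S\|}{c}.
$$
Taking the supremum over all such unit vectors $y$ yields the desired bound.

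I do not expect any serious obstacle here; the key idea is simply to use the minimum-norm preimage in $(\ker T)^\perp$, which is well defined precisely because $T$ is bounded below on that subspace. The only mild subtlety is that one must compare $y$ with a vector in $\ol{\ran S}$ rather than in $\ran S$ itself, but since $Sx$ already lies in $\ran S$, this causes no difficulty.
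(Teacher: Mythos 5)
Your proposal is correct and follows essentially the same route as the paper: the paper also bounds the infimum over $\ol{\ran S}$ by evaluating at $Su$ with $u$ the (minimum-norm) preimage of $y$ in $(\ker T)^\perp$, and likewise derives closedness of $\ran T$ from Lemma \ref{l:op1}. Your explicit treatment of the preimage and of why Lemma \ref{l:op1} applies to the restriction $T|(\ker T)^\perp$ just spells out details the paper leaves implicit.
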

\begin{proof}
Using the formula from the above discussion, we have
\begin{align*}
\delta\left(\ran T,\ol{\ran S}\right)
&= \sup\left\{\inf_{z\in\ol{\ran S}}\|y - z\| : y\in\ran T,\,\|y\|=1\right\}\\
&= \sup\left\{\inf_{u\in X}\|Tx - Su\| : x\in(\ker T)^\perp,\,\|Tx\|=1\right\}\\
&\le \sup\left\{\|Tx - Sx\| : x\in(\ker T)^\perp,\,\|Tx\|=1\right\}\\
&\le \|T - S\|\,\sup\{\|x\| : x\in(\ker T)^\perp,\,\|Tx\|=1\}\le c^{-1}\|T - S\|.
\end{align*}
The fact that $\ran T$ is closed follows from Lemma \ref{l:op1}.
\end{proof}

It is clear that Lemma \ref{l:gappyhelpy} is only useful if $\|T-S\| < c$, that is, when $S$ is sufficiently close to $T$ in norm. In fact, since later $T$ and $S$ will be analysis operators of frames which are (small) perturbations of one another, the condition $\|T-S\| < c$ will be satisfied with $c$ being the perturbation parameter. The same remark also applies to the estimates in the next corollary.

\begin{cor}\label{c:delta_fs}
Let $\calA$ be a $\calB(\calH,\calK)$-valued frame sequence with lower frame bound $\alpha$ and let $\calB$ a $\calB(\calH,\calK)$-valued Bessel sequence. Then
$$
\delta(\calH_\calA,\calH_\calB)\,\le\,\frac{\|T_\calA - T_\calB\|}{\sqrt{\alpha}}
\qquad\text{and also}\qquad
\delta\left(\ran T_\calA,\ol{\ran T_\calB}\right)\,\le\,\frac{\|T_\calA - T_\calB\|}{\sqrt{\alpha}}.
$$
\end{cor}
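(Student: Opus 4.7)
My plan is to deduce both inequalities directly from Lemma~\ref{l:gappyhelpy} applied once to the pair $(T_\calA, T_\calB)$ and once to the adjoint pair $(T_\calA^*, T_\calB^*)$, with constant $c = \sqrt{\alpha}$.

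For the second inequality, the hypothesis of Lemma~\ref{l:gappyhelpy} holds essentially by definition: by Lemma~\ref{l:ranges}, $(\ker T_\calA)^\perp = \calH_\calA$, and the lower frame bound gives $\|T_\calA x\|^2 \ge \alpha\|x\|^2$ for every $x \in \calH_\calA$. Therefore Lemma~\ref{l:gappyhelpy} applied to $T = T_\calA$, $S = T_\calB$ yields
$$
\delta\bigl(\ran T_\calA,\,\ol{\ran T_\calB}\bigr)\,\le\,\frac{\|T_\calA - T_\calB\|}{\sqrt\alpha},
$$
which is exactly the second claim.

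For the first inequality I need to verify the boundedness-below hypothesis for $T_\calA^*$ on $(\ker T_\calA^*)^\perp = \ol{\ran T_\calA}$. Since $\calA$ is a frame sequence, the restriction $\hat T := T_\calA|\calH_\calA : \calH_\calA \to \ran T_\calA$ is a bounded bijection with $\|\hat T^{-1}\| \le 1/\sqrt\alpha$, so $\ran T_\calA$ is closed. Taking adjoints (and using that $\ran T_\calA^* \subset \calH_\calA$ by Lemma~\ref{l:ranges}, which makes $\hat T^*$ coincide with $T_\calA^*|\ran T_\calA$), I obtain $\|T_\calA^* y\| \ge \sqrt\alpha\,\|y\|$ for every $y \in \ran T_\calA = (\ker T_\calA^*)^\perp$. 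Hence Lemma~\ref{l:gappyhelpy} applied to $T = T_\calA^*$, $S = T_\calB^*$ gives
$$
\delta\bigl(\ran T_\calA^*,\,\ol{\ran T_\calB^*}\bigr)\,\le\,\frac{\|T_\calA^* - T_\calB^*\|}{\sqrt\alpha}\,=\,\frac{\|T_\calA - T_\calB\|}{\sqrt\alpha}.
$$
By Lemma~\ref{l:ranges}, $\ol{\ran T_\calA^*} = \calH_\calA$ and $\ol{\ran T_\calB^*} = \calH_\calB$; moreover $\ran T_\calA^*$ is closed (again by Lemma~\ref{l:gappyhelpy} or by passing $\hat T$'s closed range to $\hat T^*$), so $\ran T_\calA^* = \calH_\calA$, and the inequality rewrites as the first claim.

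The only mildly delicate step is the adjoint argument for $T_\calA^*$: one needs that the lower bound $\sqrt\alpha$ transfers from $\hat T$ to $\hat T^*$, which I obtain from the elementary Hilbert-space fact $\|\hat T^{-1}\| = \|(\hat T^*)^{-1}\|$ applied to the bijection $\hat T : \calH_\calA \to \ran T_\calA$ (using that $\ran T_\calA$ is already closed). Everything else is bookkeeping between the images and closures of $T_\calA$ and $T_\calA^*$ that has been established in Lemma~\ref{l:ranges}.
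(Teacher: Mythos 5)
Your proof is correct and follows essentially the same route as the paper: both inequalities are obtained from Lemma~\ref{l:gappyhelpy}, applied once to the pair $(T_\calA,T_\calB)$ and once to $(T_\calA^*,T_\calB^*)$ with $c=\sqrt\alpha$, using Lemma~\ref{l:ranges} to translate ranges and kernels. The only difference is that you spell out the justification of the lower bound $\|T_\calA^*z\|\ge\sqrt\alpha\|z\|$ on $\ran T_\calA$ via $\|\hat T^{-1}\|=\|(\hat T^*)^{-1}\|$, which the paper simply asserts.
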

\begin{proof}
The second relation follows immediately from Lemma \ref{l:gappyhelpy} with $X = \calH$, $Y = \frakK$, $T = T_\calA$, and $S = T_\calB$. For the first relation we choose $X = \frakK$, $Y = \calH$, $T = T_\calA^*$, and $S = T_\calB^*$. Then Lemma \ref{l:gappyhelpy} is applicable since $(\ker T_\calA^*)^\perp = \ran T_\calA$ and for $z\in\ran T_\calA$ we have $\|T_\calA^*z\|\ge\sqrt\alpha\|z\|$. The claim then follows from Lemma \ref{l:ranges}.
\end{proof}

The following theorem shows in particular that the perturbation of a frame sequence remains being a frame sequence when the perturbation parameter and the gap are sufficiently small. This was already proven in \cite[Thm. 2.1]{ckkl}. However, for the convenience of the reader, we present a short proof.

\begin{thm}\label{t:pert1}
Let $\calA$ be a $\calB(\calH,\calK)$-valued frame sequence with frame bounds $\alpha\le\beta$, and let $\calB$ be a $\mu$-perturbation of $\calA$. Then we have
\begin{equation}\label{e:first_delta_ov}
\delta(\calH_\calA,\calH_\calB)\,\le\,\frac{\mu}{\sqrt \alpha}.
\end{equation}
Moreover, if $\mu < \sqrt \alpha$ and $\Delta(\calH_\calA,\calH_\calB) < 1$, then the following statements hold:
\begin{itemize}
\item[{\rm (i)}]  $\calB$ is an operator-valued frame sequence with frame bounds
\begin{equation}\label{e:framebounds}
\big(\sqrt \alpha - \mu\big)^2
\quad\text{and}\quad
\big(\delta(\calH_\calB,\calH_\calA^\perp)\sqrt{\beta} + \mu\big)^2.
\end{equation}
If $\calA$ is an operator-valued frame for $\calH$ then so ist $\calB$.
\item[{\rm (ii)}] For the gap $\Delta(\ran T_\calA,\ran T_\calB)$ between the closed subspaces $\ran T_\calA$ and $\ran T_\calB$ we have
\begin{equation*}
\Delta(\ran T_\calA,\ran T_\calB)\le\frac{\mu}{\sqrt \alpha - \mu}.
\end{equation*}
\end{itemize}
\end{thm}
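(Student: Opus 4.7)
\medskip

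\noindent\textbf{Proof proposal.}
The first estimate \eqref{e:first_delta_ov} is nothing more than the first inequality of Corollary \ref{c:delta_fs} applied to the pair $(\calA,\calB)$. For the rest, the crucial structural input will be Lemma \ref{l:gap}(ii): under the standing assumption $\Delta(\calH_\calA,\calH_\calB)<1$, both $P_\calA|\calH_\calB$ and $P_\calB|\calH_\calA$ are Banach-space isomorphisms between $\calH_\calA$ and $\calH_\calB$. This will let me transport frame estimates between the two subspaces.

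For the upper frame bound in (i), I would argue as follows. For $x\in\calH_\calB$, the $\mu$-perturbation assumption yields
$\|T_\calB x\|\le\|T_\calA x\|+\mu\|x\|$. Since $\ker T_\calA=\calH_\calA^\perp$ by Lemma \ref{l:ranges}, we have $T_\calA x=T_\calA P_\calA x$, so $\|T_\calA x\|\le\sqrt\beta\,\|P_\calA x\|$. Recognizing that
$$
\|P_\calA x\|\le\|P_\calA|\calH_\calB\|\cdot\|x\|=\delta(\calH_\calB,\calH_\calA^\perp)\,\|x\|
$$
(using $\delta(V,W)=\|P_{W^\perp}|V\|$ with $V=\calH_\calB$, $W=\calH_\calA^\perp$) gives the claimed upper bound $\bigl(\delta(\calH_\calB,\calH_\calA^\perp)\sqrt\beta+\mu\bigr)^{2}$.

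The lower bound is the real obstacle: the naive inequality $\|T_\calB x\|\ge\|T_\calA x\|-\mu\|x\|$ is useless because $\|T_\calA x\|\ge\sqrt\alpha\|x\|$ only on $\calH_\calA$, whereas $x$ lies in $\calH_\calB$. My plan is to circumvent this by pulling $x$ back to $\calH_\calA$ via the isomorphism. Given $x\in\calH_\calB$, set $y:=(P_\calB|\calH_\calA)^{-1}x\in\calH_\calA$; then $P_\calB y=x$ and, because orthogonal projections have norm at most one, $\|y\|\ge\|P_\calB y\|=\|x\|$. Since $T_\calB$ annihilates $\calH_\calB^\perp=\ker T_\calB$, we have $T_\calB x=T_\calB P_\calB y=T_\calB y$, and now the naive estimate applied to $y\in\calH_\calA$ works:
$$
\|T_\calB x\|=\|T_\calB y\|\ge\|T_\calA y\|-\mu\|y\|\ge(\sqrt\alpha-\mu)\|y\|\ge(\sqrt\alpha-\mu)\|x\|.
$$
For the final assertion of (i), if $\calH_\calA=\calH$ then \eqref{e:first_delta_ov} together with $\mu<\sqrt\alpha$ yields $\delta(\calH,\calH_\calB)<1$, which forces $\calH_\calB=\calH$, so $\calB$ is a frame for $\calH$ as well.

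For (ii), I would apply Corollary \ref{c:delta_fs} in both directions. Directly: $\delta(\ran T_\calA,\ol{\ran T_\calB})\le\mu/\sqrt\alpha$. With the roles of $\calA$ and $\calB$ interchanged, and using the lower frame bound $(\sqrt\alpha-\mu)^2$ for $\calB$ just established, $\delta(\ran T_\calB,\ol{\ran T_\calA})\le\mu/(\sqrt\alpha-\mu)$. Since both ranges are closed (Lemma \ref{l:easy_op-valued-frames}) and $\mu/\sqrt\alpha\le\mu/(\sqrt\alpha-\mu)$, taking the maximum in the definition \eqref{e:maxgap} of $\Delta$ delivers the required bound.
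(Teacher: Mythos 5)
Your argument is correct and follows essentially the same route as the paper: \eqref{e:first_delta_ov} and part (ii) via Corollary \ref{c:delta_fs} in both directions, the upper bound via $\|T_\calB x\|\le\mu\|x\|+\sqrt\beta\,\|P_\calA|\calH_\calB\|\,\|x\|$, and the lower bound by transporting the estimate between $\calH_\calA$ and $\calH_\calB$ through the isomorphism $P_\calB|\calH_\calA$ (the paper pushes forward from $\calH_\calA$, you pull back from $\calH_\calB$ — the same computation read in the opposite direction). No gaps.
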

\begin{proof}
The relation \eqref{e:first_delta_ov} follows directly from Corollary \ref{c:delta_fs}. Assume now that $\mu < \sqrt \alpha$ and $\Delta := \Delta(\calH_\calA,\calH_\calB) < 1$.

(i). For the upper frame bound of $\calB$, let $x\in\calH_\calB$. Then
$$
\|T_\calB x\|\le\|(T_\calB - T_\calA)x\| + \|T_\calA(P_\calA|\calH_\calB)x\|\le\left(\mu + \sqrt{\beta}\|P_\calA|\calH_\calB\|\right)\|x\|.
$$
For the lower  frame bound, let $x\in\calH_\calA$. Then we have (see \eqref{e:TH})
\begin{align*}
\|T_\calB P_\calB x\| = \|T_\calB x\|\ge\|T_\calA x\| - \|(T_\calA - T_\calB)x\|\ge(\sqrt \alpha - \mu)\|x\|\ge(\sqrt \alpha - \mu)\|P_\calB x\|.
\end{align*}
And as (due to $\Delta < 1$) $P_\calB$ maps $\calH_\calA$ bijectively onto $\calH_\calB$, a lower frame bound of $\calB$ is $(\sqrt \alpha - \mu)^2$. It also follows from $\Delta < 1$ that $\calB$ is an operator-valued frame if $\calA$ is.

(ii). Applying Corollary \ref{c:delta_fs} to both $(\calA,\calB)$ and $(\calB,\calA)$, we obtain
$$
\delta(\ran T_\calA,\ran T_\calB)\le\frac{\mu}{\sqrt\alpha}\qquad\text{and}\qquad\delta(\ran T_\calB,\ran T_\calA)\le\frac{\mu}{\sqrt\alpha - \mu}.
$$
Since the second right hand side is larger than the first (unless $\mu = 0$), the claim follows from \eqref{e:maxgap}.
\end{proof}

\begin{rem}\label{r:ab}
{\rm (a)} The condition $\Delta(\calH_\calA,\calH_\calB) < 1$ cannot be omitted in {\rm (i)}. A counterexample in the vector sequence case can be found in \cite[Example 15.3.1]{c}.

\smallskip
{\rm (b)} Note that $\Delta$ in Theorem \rmref{t:pert1} tends {\rm (}linearly{\rm )} to zero with $\mu$ {\rm (}cf.\ \eqref{e:first_delta_ov}{\rm )}.
\end{rem}

\subsection{The Perturbation Effect on the Duals}
In \cite{hmp}, it was studied how perturbations of a vector frame sequence affect the canonical dual. A similar approach was made in \cite{s07} for $G$-frames. In the following, we consider the problem in a much more general setting. Firstly, we consider operator-valued frame sequences and secondly, we include all duals in our considerations. Since our methods are different from those in \cite{hmp} and \cite{s07}, we can significantly improve the estimates.

For the formulation of the following statements we ask the reader to recall the parametrization $\calL_\calA\to\calD(\calA)$, $L\mapsto\wt\calA(L)$, in \eqref{eq:A_tilde} of the duals of an operator-valued frame sequence $\calA$.

\begin{lem}\label{l:dual_op_diff}
Let $\calA$ and $\calB$ be two $\calB(\calH,\calK)$-valued frame sequences and let $L\in\calL_\calA$ and $M\in\calL_\calB$. Then we have
\begin{align}
\begin{split}\label{e:darst}
T_{\wt\calB(M)} - T_{\wt\calA(L)}
&= T_\calB(S_\calB|\calH_\calB)^{-1}P_\calB(T_\calA^* - T_\calB^*)RP_\calA P_\calB\\
&\hspace{.92cm} + T_\calB(S_\calB|\calH_\calB)^{-1}P_\calB(I - P_\calA)P_\calB\\
&\hspace{3.1cm} - RP_\calA(I - P_\calB)\\
&\hspace{1.82cm} + \left(M - P_{\ker T_\calB^*}RP_\calA\right)P_\calB,
\end{split}
\end{align}
where $R := T_\calA(S_\calA|\calH_\calA)^{-1} + L\in\calB(\calH_\calA,\frakK)$.
\end{lem}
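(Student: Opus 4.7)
The plan is to start from the explicit expression of the dual analysis operators in \eqref{eq:A_tilde}, compute $T_{\wt\calB(M)} - T_{\wt\calA(L)}$ directly, and then reorganize the resulting three-term expression into the stated four-term decomposition by inserting judiciously chosen resolutions of the identity. The starting point is simply
$$
T_{\wt\calB(M)} - T_{\wt\calA(L)} \,=\, T_\calB(S_\calB|\calH_\calB)^{-1}P_\calB \,+\, MP_\calB \,-\, RP_\calA.
$$

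Two structural identities will drive the proof. The first is $T_\calA^*R = P_\calA$: indeed, $L\in\calL_\calA$ gives $T_\calA^*L = 0$, while $T_\calA^*T_\calA(S_\calA|\calH_\calA)^{-1} = S_\calA(S_\calA|\calH_\calA)^{-1} = P_\calA$ because $S_\calA$ annihilates $\calH_\calA^\perp$. The second is the orthogonal decomposition $I = P_{\ran T_\calB} + P_{\ker T_\calB^*}$ together with the range-projection formula $P_{\ran T_\calB} = T_\calB(S_\calB|\calH_\calB)^{-1}T_\calB^*$; since $\ran T_\calB^*\subset\calH_\calB$ by Lemma \ref{l:ranges}, I will also use the bookkeeping identity $P_\calB T_\calB^* = T_\calB^*$.

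With these in hand, I would perform two splittings. Writing $RP_\calA = RP_\calA P_\calB + RP_\calA(I-P_\calB)$ yields the third line of \eqref{e:darst} directly. Inserting $I = P_\calA + (I-P_\calA)$ inside $T_\calB(S_\calB|\calH_\calB)^{-1}P_\calB = T_\calB(S_\calB|\calH_\calB)^{-1}P_\calB\cdot P_\calB$ produces the second line of \eqref{e:darst} plus a residual $T_\calB(S_\calB|\calH_\calB)^{-1}P_\calB P_\calA P_\calB$; substituting $P_\calA = T_\calA^*R$ in this residual rewrites it as $T_\calB(S_\calB|\calH_\calB)^{-1}P_\calB T_\calA^*RP_\calA P_\calB$, which is exactly the $T_\calA^*$ half of the first line.

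The final step is to handle $-RP_\calA P_\calB + MP_\calB$. Decomposing $RP_\calA P_\calB = (P_{\ran T_\calB} + P_{\ker T_\calB^*})RP_\calA P_\calB$ and using $P_{\ran T_\calB} = T_\calB(S_\calB|\calH_\calB)^{-1}P_\calB T_\calB^*$ yields $-T_\calB(S_\calB|\calH_\calB)^{-1}P_\calB T_\calB^*RP_\calA P_\calB - P_{\ker T_\calB^*}RP_\calA P_\calB$. The first summand combines with the $T_\calA^*$ piece from the previous paragraph to produce the full first line of \eqref{e:darst}, and the second, together with $MP_\calB$, yields the fourth line $(M - P_{\ker T_\calB^*}RP_\calA)P_\calB$. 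The main subtlety is spotting precisely this pair of identities — the projection onto $\ran T_\calB$ and the relation $T_\calA^*R = P_\calA$ — once these are in place, the rest is organized bookkeeping in which the projections $P_\calA$ and $P_\calB$ are inserted or removed by virtue of Lemma \ref{l:ranges}.
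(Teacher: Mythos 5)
Your proposal is correct: the starting expression, the two insertions of resolutions of the identity ($I=P_\calA+(I-P_\calA)$ and $I=P_{\ran T_\calB}+P_{\ker T_\calB^*}$ with $P_{\ran T_\calB}=T_\calB(S_\calB|\calH_\calB)^{-1}T_\calB^*$), and the identity $T_\calA^*RP_\calA=P_\calA$ all check out, and the four lines of \eqref{e:darst} assemble exactly as you describe. This is essentially the paper's argument run in the opposite direction --- the paper expands the right-hand side, groups the terms by whether they contain $R$ or $L$, and collapses each group to recover $T_{\wt\calB(M)}-T_{\wt\calA(L)}$ using the very same operator identities --- so the two proofs differ only in presentation, not in substance.
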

\begin{proof}
For a more comfortable reading we write $S_\calB^{-1}$ instead of $(S_\calB|\calH_\calB)^{-1}$ (analogously for $\calA$). Consider the right hand side of \eqref{e:darst}. Let us first extract the sum of those terms which do not contain $R$:
$$
S_1 := T_\calB S_\calB^{-1}P_\calB(I - P_\calA)P_\calB + MP_\calB = T_{\wt\calB(M)} - T_\calB S_\calB^{-1}P_\calB P_\calA P_\calB.
$$
Now we extract the sum of terms containing $L$ (inside $R$). Note that $T_\calA^*L = 0$ and $P_{\ran T_\calB} = T_\calB(S_\calB|\calH_\calB)^{-1}T_\calB^*$. This sum is
\begin{align*}
S_2 :&= T_\calB S_\calB^{-1}P_\calB(T_\calA^* - T_\calB^*)LP_\calA P_\calB - LP_\calA(I - P_\calB) - P_{\ker T_\calB^*}LP_\calA P_\calB\\
&= -T_\calB S_\calB^{-1}T_\calB^*LP_\calA P_\calB - LP_\calA(I - P_\calB) - P_{\ker T_\calB^*}LP_\calA P_\calB\\
&= -LP_\calA P_\calB - LP_\calA(I - P_\calB) = -LP_\calA.
\end{align*}
The rest of the sum is given by
\begin{align*}
S_3 :&= T_\calB S_\calB^{-1}P_\calB(T_\calA^* - T_\calB^*)T_\calA S_\calA^{-1}P_\calA P_\calB - T_\calA S_\calA^{-1}P_\calA(I - P_\calB) - P_{\ker T_\calB^*}T_\calA S_\calA^{-1}P_\calA P_\calB\\
&= (T_\calB S_\calB^{-1}P_\calB - P_{\ran T_\calB}T_\calA S_\calA^{-1})P_\calA P_\calB - T_\calA S_\calA^{-1}P_\calA(I - P_\calB) - P_{\ker T_\calB^*}T_\calA S_\calA^{-1}P_\calA P_\calB\\
&= T_\calB S_\calB^{-1}P_\calB P_\calA P_\calB - T_\calA S_\calA^{-1}P_\calA P_\calB - T_\calA S_\calA^{-1}P_\calA(I - P_\calB)\\
&= T_\calB S_\calB^{-1}P_\calB P_\calA P_\calB - T_\calA S_\calA^{-1}P_\calA.
\end{align*}
Thus,
$$
S_1 + S_2 + S_3 = T_{\wt\calB(M)} - LP_\calA - T_\calA S_\calA^{-1}P_\calA = T_{\wt\calB(M)} - T_{\wt\calA(L)},
$$
and the lemma is proven.
\end{proof}

Let us first study how perturbation effects the canonical duals of original and perturbed sequence.

\begin{thm}\label{t:canonicals}
Let $\calA$ be a $\calB(\calH,\calK)$-valued frame sequence with lower frame bound $\alpha$, let $\calB$ be a $\mu$-perturbation of $\calA$, $\mu < \sqrt \alpha$, and assume that the gap $\Delta := \Delta(\calH_\calA,\calH_\calB) < 1$. Then $\calB$ is a $\calB(\calH,\calK)$-valued frame sequence and for the canonical duals $\wt\calA$ and $\wt\calB$ of $\calA$ and $\calB$, respectively, we have
$$
\left\|T_{\wt\calA} - T_{\wt\calB}\right\|\,\le\,\frac{2\mu + (2\sqrt\alpha - \mu)\Delta}{\sqrt\alpha(\sqrt\alpha - \mu)}.
$$
\end{thm}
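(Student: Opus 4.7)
The plan is to apply Lemma \ref{l:dual_op_diff} with the choice $L = 0$ and $M = 0$, so that the canonical duals $\wt\calA$ and $\wt\calB$ correspond to $\wt\calA(0)$ and $\wt\calB(0)$, and the operator $R$ in the lemma becomes $R = T_\calA(S_\calA|\calH_\calA)^{-1}$. This produces the four-summand decomposition of $T_{\wt\calB} - T_{\wt\calA}$ written in \eqref{e:darst}, and the task reduces to estimating the norm of each summand individually. Before that, I would invoke Theorem \ref{t:pert1}\,(i) to conclude that $\calB$ is itself a frame sequence with lower frame bound $(\sqrt\alpha - \mu)^2$; applying \eqref{e:la_tormenta} to both $\calA$ and $\calB$ then gives the ``master bounds'' $\|T_\calA(S_\calA|\calH_\calA)^{-1}P_\calA\| \le 1/\sqrt\alpha$ and $\|T_\calB(S_\calB|\calH_\calB)^{-1}P_\calB\| \le 1/(\sqrt\alpha - \mu)$ that will be used repeatedly.

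The first summand, $T_\calB(S_\calB|\calH_\calB)^{-1}P_\calB(T_\calA^* - T_\calB^*)RP_\calA P_\calB$, is estimated by a direct product of the two master bounds with $\|T_\calA - T_\calB\| \le \mu$, giving $\mu/(\sqrt\alpha(\sqrt\alpha - \mu))$. The second and third summands contain the factors $P_\calB(I - P_\calA)P_\calB$ and $RP_\calA(I - P_\calB)$, respectively. Since $\Delta(\calH_\calA,\calH_\calB) < 1$, Lemma \ref{l:gap}\,(ii) gives $\delta(\calH_\calA,\calH_\calB) = \delta(\calH_\calB,\calH_\calA) = \Delta$, whence $\|(I - P_\calA)P_\calB\| = \|P_\calA(I - P_\calB)\| = \Delta$. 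Combined with the master bounds, the second and third terms contribute $\Delta/(\sqrt\alpha - \mu)$ and $\Delta/\sqrt\alpha$, whose sum rearranges to $(2\sqrt\alpha - \mu)\Delta/(\sqrt\alpha(\sqrt\alpha - \mu))$, which is precisely the $\Delta$-part of the claimed bound.

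The delicate point is the fourth summand $P_{\ker T_\calB^*}RP_\calA P_\calB = P_{\ker T_\calB^*}T_\calA(S_\calA|\calH_\calA)^{-1}P_\calA P_\calB$: a naive estimate via $\|T_{\wt\calA}\| \le 1/\sqrt\alpha$ would yield only a constant $1/\sqrt\alpha$ rather than something proportional to $\mu$, which would ruin the bound. The key observation is that $P_{\ker T_\calB^*}T_\calB = 0$, so one may substitute $P_{\ker T_\calB^*}T_\calA = P_{\ker T_\calB^*}(T_\calA - T_\calB)$; this produces the estimate $\mu\cdot\|(S_\calA|\calH_\calA)^{-1}P_\calA\| \le \mu/\alpha$, which is no greater than $\mu/(\sqrt\alpha(\sqrt\alpha - \mu))$ because $\sqrt\alpha - \mu \le \sqrt\alpha$. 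Adding this to the first-summand contribution gives the $\mu$-part $2\mu/(\sqrt\alpha(\sqrt\alpha - \mu))$, and combining with the $\Delta$-part produces the claimed inequality. I expect spotting this cancellation $P_{\ker T_\calB^*}T_\calB = 0$ to be the only non-routine step; the rest is careful bookkeeping with the triangle inequality.
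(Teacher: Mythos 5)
Your proof is correct and follows essentially the same route as the paper: Lemma \ref{l:dual_op_diff} with $L=M=0$, the bounds $\|T_{\wt\calA}\|\le 1/\sqrt\alpha$ and $\|T_{\wt\calB}\|\le 1/(\sqrt\alpha-\mu)$, and $\|(I-P_\calA)P_\calB\|=\|P_\calA(I-P_\calB)\|=\Delta$ for the three terms. The only (harmless) deviation is in the fourth summand, where the paper instead uses $P_{\ker T_\calA^*}T_\calA=0$ to rewrite it as $(P_{\ker T_\calA^*}-P_{\ker T_\calB^*})T_\calA S_\calA^{-1}P_\calA P_\calB$ and then invokes $\Delta(\ker T_\calA^*,\ker T_\calB^*)=\Delta(\ran T_\calA,\ran T_\calB)\le\mu/(\sqrt\alpha-\mu)$ from Theorem \ref{t:pert1}\,(ii); your substitution $P_{\ker T_\calB^*}T_\calA=P_{\ker T_\calB^*}(T_\calA-T_\calB)$ yields the marginally sharper intermediate bound $\mu/\alpha$ and still fits under the stated estimate.
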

\begin{proof}
The fact that $\calB$ is a $\calB(\calH,\calK)$-valued frame sequence follows directly from Theorem \ref{t:pert1}. This theorem also yields that $\calB$ has the lower frame bound $(\sqrt\alpha - \mu)^{2}$ and that $\Delta(\ran T_\calA,\ran T_\calB)\le\tfrac{\mu}{\sqrt\alpha - \mu}$. We now make use of Lemma \ref{l:dual_op_diff} and find that (setting $M = 0$ and $L = 0$)
\begin{align*}
T_{\wt\calB} - T_{\wt\calA}
&= T_\calB S_\calB^{-1}P_\calB(T_\calA^* - T_\calB^*)T_\calA S_\calA^{-1}P_\calA P_\calB + T_\calB S_\calB^{-1}P_\calB(I - P_\calA)P_\calB\\
&\hspace{2.26cm}- T_\calA S_\calA^{-1}P_\calA(I - P_\calB) - P_{\ker T_\calB^*}T_\calA S_\calA^{-1}P_\calA P_\calB\\
&= T_\calB S_\calB^{-1}P_\calB(T_\calA^* - T_\calB^*)T_\calA S_\calA^{-1}P_\calA P_\calB + (P_{\ker T_\calA^*} - P_{\ker T_\calB^*})T_\calA S_\calA^{-1}P_\calA P_\calB\\
&\hspace{3.3cm}+ T_\calB S_\calB^{-1}P_\calB(I - P_\calA)P_\calB - T_\calA S_\calA^{-1}P_\calA(I - P_\calB),
\end{align*}
where we again agree to write $S_\calA^{-1} = (S_\calA|\calH_\calA)^{-1}$ and $S_\calB^{-1} = (S_\calB|\calH_\calB)^{-1}$. Now, we observe that
\[
\|(I - P_\calA)P_\calB\| = \|(I - P_\calA)|\calH_\calB\| = \Delta = \|(I - P_\calB)|\calH_\calA\| = \|(I - P_\calB)P_\calA\|.
\]
Making extensive use of \eqref{e:la_tormenta}, we obtain
$$
\left\|T_{\wt\calB} - T_{\wt\calA}\right\|
\le \frac{\mu}{\sqrt\alpha(\sqrt\alpha - \mu)} + \frac{\Delta(\ker T_\calA^*,\ker T_\calB^*)}{\sqrt\alpha} + \left(\frac{1}{\sqrt\alpha - \mu} + \frac{1}{\sqrt\alpha}\right)\Delta.
$$
But this is the claim as $\Delta(\ker T_\calA^*,\ker T_\calB^*) = \Delta(\ran T_\calA,\ran T_\calB)$, see \eqref{e:perp}.
\end{proof}

Let us state Theorem \ref{t:canonicals} especially for the case of (operator-valued) frames (where $\calH_\calA = \calH_\calB = \calH$ and $\Delta = 0$).

\begin{cor}\label{c:canonicals_frame}
Let $\calA$ be a $\calB(\calH,\calK)$-valued frame for $\calH$ with lower frame bound $\alpha$ and let $\calB$ be a $\mu$-perturbation of $\calA$, $\mu < \sqrt\alpha$. Then $\calB$ is a $\calB(\calH,\calK)$-valued frame for $\calH$ and for the canonical duals $\wt\calA$ and $\wt\calB$ of $\calA$ and $\calB$, respectively, we have
$$
\left\|T_{\wt\calA} - T_{\wt\calB}\right\|\,\le\,\frac{2\mu}{\sqrt\alpha(\sqrt\alpha - \mu)}.
$$
\end{cor}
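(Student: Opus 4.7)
The plan is to obtain this corollary as a direct specialization of Theorem \ref{t:canonicals} to the case $\calH_\calA = \calH$. When $\calA$ is a frame for all of $\calH$, the subspace $\calH_\calA$ coincides with $\calH$; if we can also show that the perturbed sequence $\calB$ spans $\calH$, then the gap quantity $\Delta := \Delta(\calH_\calA,\calH_\calB)$ vanishes identically, and the right hand side of the estimate in Theorem \ref{t:canonicals} collapses to $2\mu/(\sqrt\alpha(\sqrt\alpha-\mu))$.

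The hypotheses of Theorem \ref{t:canonicals} require $\mu < \sqrt\alpha$ (assumed) and $\Delta < 1$. First I would verify $\calH_\calB = \calH$. By Corollary \ref{c:delta_fs} we have $\delta(\calH_\calA,\calH_\calB) = \delta(\calH,\calH_\calB) \le \mu/\sqrt\alpha < 1$, so by Lemma \ref{l:gap}(i) applied with $V = \calH$ and $W = \calH_\calB$, the intersection $\calH \cap \calH_\calB^\perp$ is trivial, forcing $\calH_\calB = \calH$. Consequently $\Delta(\calH_\calA,\calH_\calB) = \|P_{\calH_\calA} - P_{\calH_\calB}\| = 0$, in particular $\Delta < 1$, and the last sentence of Theorem \ref{t:pert1}(i) reconfirms that $\calB$ is an operator-valued frame for $\calH$.

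With $\calH_\calA = \calH_\calB = \calH$ in force, the canonical duals reduce to $T_{\wt\calA} = T_\calA S_\calA^{-1}$ and $T_{\wt\calB} = T_\calB S_\calB^{-1}$, and plugging $\Delta = 0$ into the bound of Theorem \ref{t:canonicals} gives
$$
\|T_{\wt\calA} - T_{\wt\calB}\|\,\le\,\frac{2\mu + (2\sqrt\alpha - \mu)\cdot 0}{\sqrt\alpha(\sqrt\alpha-\mu)}\,=\,\frac{2\mu}{\sqrt\alpha(\sqrt\alpha-\mu)},
$$
as claimed. There is no genuine obstacle: the only delicate point is the minor verification that the ``frame for $\calH$'' property is inherited by $\calB$, and this is precisely handled by the sharp gap bound $\delta(\calH,\calH_\calB) \le \mu/\sqrt\alpha < 1$ from Corollary \ref{c:delta_fs}.
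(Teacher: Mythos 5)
Your proof is correct and follows exactly the paper's route: the corollary is stated there as the immediate specialization of Theorem \ref{t:canonicals} to the case $\calH_\calA=\calH_\calB=\calH$, $\Delta=0$. Your explicit verification that $\calH_\calB=\calH$ (via Corollary \ref{c:delta_fs} and Lemma \ref{l:gap}(i)) is a detail the paper leaves implicit, but it is the right justification and introduces no circularity.
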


\begin{rem}
As mentioned above, the perturbation effect on the canonical duals has been considered before in \cite{hmp} and \cite{s07}. However, our setting is more general and the estimates are better. For example, \cite[Theorem 4.1]{s07} states that if $\calA_1$ and $\calA_2$ are $\calB(\calH,\calK)$-valued frames with frame bounds $\alpha_j\le\beta_j$ {\rm (}$j=1,2${\rm )} which are $\mu$-perturbations of each other, then
$$
\|T_{\wt\calA_1} - T_{\wt\calA_2}\|\le\frac{\alpha_1 + \beta_1 + \sqrt{\beta_1\beta_2}}{\alpha_1\alpha_2}\,\mu.
$$
With $\alpha_2 = (\sqrt{\alpha_1} - \mu)^2$ and $\beta_2 = (\sqrt{\beta_1} + \mu)^2$ {\rm (}cf.\ \eqref{e:framebounds}{\rm )}, this is
$$
\|T_{\wt\calA_1} - T_{\wt\calA_2}\|\le\frac{(\alpha_1 + 2\beta_1 + \sqrt{\beta_1}\mu)\mu}{\alpha_1(\sqrt{\alpha_1} - \mu)^2}.
$$
It is now not hard to see that the bound in Corollary \rmref{c:canonicals_frame} is significantly better.
\end{rem}

In the following, we shall study the perturbation effect on the alternate duals. In particular, we show that whenever $\calA$ is an operator-valued frame sequence, $\wt\calA$ a dual of $\calA$, and $\calB$ a small perturbation of $\calA$, then there is a dual $\wt\calB$ of $\calB$ which is also a small perturbation of $\wt\calA$. We explicitly specify this dual.

\begin{thm}\label{t:o-v-duals_sequences}
Let $\calA$ be a $\calB(\calH, \calK)$-valued frame sequence with lower frame bound $\alpha$ and let $\calB$ be a $\calB(\calH,\calK)$-valued $\mu$-perturbation of $\calA$, $\mu < \sqrt{\alpha}$, such that $\Delta := \Delta(\calH_\calA,\calH_\calB) < 1$. Then $\calB$ is an operator-valued frame sequence, and for every $L\in\calL_\calA$ the dual
$$
\wt\calB_L := \wt\calB\left(P_{\ker T_\calB^*}T_{\wt\calA(L)}|\calH_\calB\right)
$$
of $\calB$ is a $\la$-perturbation of $\wt\calA(L)$, where
\begin{equation}\label{e:o-v-lambda}
\la = \frac{\mu + (2\sqrt\alpha - \mu)\Delta}{\sqrt\alpha (\sqrt\alpha - \mu)} + \left(\frac{\mu}{\sqrt\alpha - \mu} + \Delta\right)\|L\|.
\end{equation}
\end{thm}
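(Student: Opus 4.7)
The plan is to apply Lemma~\ref{l:dual_op_diff} with the specific choice
\[
M := P_{\ker T_\calB^*}T_{\wt\calA(L)}\big|\calH_\calB.
\]
That $\calB$ is itself an operator-valued frame sequence, with lower frame bound $(\sqrt\alpha-\mu)^2$, is immediate from Theorem~\ref{t:pert1}(i), so $\calL_\calB$ and $\calD(\calB)$ make sense. Since $\ran M\subset\ker T_\calB^*$, we have $T_\calB^*M=0$, hence $M\in\calL_\calB$ and $\wt\calB_L=\wt\calB(M)$ is a well-defined dual of $\calB$.

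The central observation is that this choice of $M$ is precisely designed to kill the last summand in the decomposition \eqref{e:darst}. Write $R := T_\calA(S_\calA|\calH_\calA)^{-1}+L$ so that $T_{\wt\calA(L)}=RP_\calA$. Because $\ran P_\calB\subset\calH_\calB$, where $M$ agrees with $P_{\ker T_\calB^*}RP_\calA$ by construction, we get $MP_\calB=P_{\ker T_\calB^*}RP_\calA P_\calB$, and the fourth term of \eqref{e:darst} vanishes. Lemma~\ref{l:dual_op_diff} therefore reduces to
\begin{align*}
T_{\wt\calB_L}-T_{\wt\calA(L)}
&= T_\calB(S_\calB|\calH_\calB)^{-1}P_\calB(T_\calA^*-T_\calB^*)RP_\calA P_\calB \\
&\quad + T_\calB(S_\calB|\calH_\calB)^{-1}P_\calB(I-P_\calA)P_\calB - RP_\calA(I-P_\calB).
\end{align*}

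What remains is a routine triangle-inequality estimate on three terms. Applying Remark~\ref{r:bound-for-dual} to $\calB$ gives $\|T_\calB(S_\calB|\calH_\calB)^{-1}P_\calB\|\le 1/(\sqrt\alpha-\mu)$, and the same bound for the $\calA$-pieces together with the triangle inequality yields $\|R\|\le 1/\sqrt\alpha+\|L\|$ as an operator on $\calH_\calA$. Using \eqref{e:perp} one has $\|(I-P_\calA)P_\calB\|=\delta(\calH_\calB,\calH_\calA)\le\Delta$ and $\|P_\calA(I-P_\calB)\|=\delta(\calH_\calA,\calH_\calB)\le\Delta$. The three summands are then bounded by
\[
\tfrac{\mu}{\sqrt\alpha-\mu}\bigl(\tfrac1{\sqrt\alpha}+\|L\|\bigr),\qquad
\tfrac{\Delta}{\sqrt\alpha-\mu},\qquad
\Delta\bigl(\tfrac1{\sqrt\alpha}+\|L\|\bigr),
\]
respectively, and regrouping by the coefficient of $\|L\|$ produces exactly \eqref{e:o-v-lambda}.

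The only genuinely new ingredient compared to Theorem~\ref{t:canonicals} is spotting the right $M$, namely $P_{\ker T_\calB^*}T_{\wt\calA(L)}|\calH_\calB$: this is the unique element of $\calL_\calB$ for which the last (and only $M$-dependent) summand in \eqref{e:darst} vanishes, so it removes the extra degree of freedom without introducing any new error term. Once this is seen, everything else is bookkeeping, and I expect no real obstacle beyond verifying that the three-term sum telescopes as claimed.
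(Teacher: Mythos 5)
Your proposal is correct and follows essentially the same route as the paper: both apply Lemma~\ref{l:dual_op_diff} with $M = P_{\ker T_\calB^*}T_{\wt\calA(L)}|\calH_\calB$ precisely so that the fourth summand of \eqref{e:darst} vanishes, and then bound the remaining three terms by $\frac{\mu\|R\|}{\sqrt\alpha-\mu}$, $\frac{\Delta}{\sqrt\alpha-\mu}$, and $\|R\|\Delta$ with $\|R\|\le\alpha^{-1/2}+\|L\|$. (Only a cosmetic quibble: the identities $\|(I-P_\calA)P_\calB\|=\delta(\calH_\calB,\calH_\calA)$ and $\|P_\calA(I-P_\calB)\|=\delta(\calH_\calA,\calH_\calB)$ follow from the definition of $\delta$ rather than from \eqref{e:perp}.)
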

\begin{proof}
First note that by Lemma \ref{l:o-v-dual_charac}, $\wt\calB_L$ is a dual of $\calB$. 
Using the notation from Lemma \ref{l:dual_op_diff}, we have $T_{\wt\calA(L)} = RP_\calA$. Setting $M := P_{\ker T_\calB^*}T_{\wt\calA(L)}|\calH_\calB$, we observe that
\begin{align*}
T_{\wt\calB_L} - T_{\wt\calA(L)}
&= T_\calB(S_\calB|\calH_\calB)^{-1}P_\calB(T_\calA^* - T_\calB^*)RP_\calA P_\calB\\
&\hspace{.4cm}+ T_\calB(S_\calB|\calH_\calB)^{-1}P_\calB(I - P_\calA)P_\calB\\
&\hspace{.4cm}- RP_\calA(I - P_\calB).
\end{align*}
Hence,
$$
\left\|T_{\wt\calB_L} - T_{\wt\calA(L)}\right\|\,\le\,\frac{\mu\|R\|}{\sqrt\alpha - \mu} + \frac{\Delta}{\sqrt\alpha - \mu} + \|R\|\Delta\,\le\,\la,
$$
since $\|R\|\le\alpha^{-1/2} + \|L\|$.
\end{proof}

In the case of operator-valued frames, Theorem \ref{t:o-v-duals_sequences} reduces to the first statement of the next theorem. In this case our choice of the dual of the perturbed frame turns out to be perfect in terms of best approximations.

\begin{thm}\label{t:pert_frame_duals}
Let $\calA$ be a $\calB(\calH,\calK)$-valued frame with lower frame bound $\alpha$ and let $\calB$ be a $\mu$-perturbation of $\calA$, where $\mu < \sqrt{\alpha}$. Then $\calB$ is an operator-valued  frame, and for each $L\in\calL_\calA$ the dual $\wt\calB_L := \wt\calB(P_{\ker T_\calB^*}T_{\wt\calA(L)})$ of $\calB$ is a best approximation\footnote{With respect to the norm $\|\calA\| = \|T_\calA\|$ on $\frakB(I,\calH,\calK)$, see page \pageref{page:Besselnorm}.} of $\wt\calA(L)$ in $\calD(\calB)$ and a $\la$-perturbation of $\wt\calA(L)$, where
\begin{equation}\label{e:lambda2}
\la = \frac{\mu}{\sqrt\alpha - \mu}\left(\frac{1}{\sqrt\alpha} + \|L\|\right).
\end{equation}
\end{thm}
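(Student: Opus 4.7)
The plan is to derive the three claims by specializing the general operator-valued frame sequence results to the frame case and then exploiting the orthogonal decomposition $\frakK = \ker T_\calB^*\oplus\ran T_\calB$ to handle the best approximation part, which is the only genuinely new content.

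First, since $\calA$ is a $\calB(\calH,\calK)$-valued frame, $\calH_\calA=\calH$, so $\calH_\calB\subset\calH_\calA$ trivially and $\Delta(\calH_\calA,\calH_\calB)=\delta(\calH_\calB,\calH_\calA)=0$. Theorem~\ref{t:pert1}\,(i) then yields that $\calB$ is itself an operator-valued frame for $\calH$, hence $P_\calA=P_\calB=I$ and $\calL_\calB=\{M\in\calB(\calH,\frakK):T_\calB^*M=0\}$. The $\la$-perturbation statement with $\la=\frac{\mu}{\sqrt\alpha-\mu}\bigl(\tfrac{1}{\sqrt\alpha}+\|L\|\bigr)$ is then exactly the specialization $\Delta=0$ of formula~\eqref{e:o-v-lambda} in Theorem~\ref{t:o-v-duals_sequences}, so no further work is needed for this estimate.

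For the best approximation claim, I would proceed as follows. By Lemma~\ref{l:o-v-dual_charac} and $P_\calB=I$, every element of $\calD(\calB)$ has the form $T_\calB(S_\calB|\calH_\calB)^{-1}+M$ with $M\in\calL_\calB$, so, writing $N:=T_{\wt\calA(L)}-T_\calB(S_\calB|\calH_\calB)^{-1}$, the task reduces to minimizing $\|N-M\|$ over $M\in\calL_\calB$. Since $\calB$ is a frame, Lemma~\ref{l:easy_op-valued-frames}\,(ii) together with the closed range theorem guarantees that $\ran T_\calB$ is closed, giving the orthogonal decomposition $\frakK=\ker T_\calB^*\oplus\ran T_\calB$. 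For any $M\in\calL_\calB$ one has $\ran M\subset\ker T_\calB^*$, so $P_{\ran T_\calB}Mx=0$ for every $x\in\calH$, and the Pythagorean identity yields
\[
\|(N-M)x\|^2=\|P_{\ker T_\calB^*}(N-M)x\|^2+\|P_{\ran T_\calB}Nx\|^2\,\ge\,\|P_{\ran T_\calB}Nx\|^2.
\]
Taking the supremum over unit vectors $x$ gives the universal lower bound $\|N-M\|\ge\|P_{\ran T_\calB}N\|$ for every $M\in\calL_\calB$.

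It remains to verify that the specific choice $M_0:=P_{\ker T_\calB^*}T_{\wt\calA(L)}$ from the statement realizes this bound. Since $\ran T_\calB\subset(\ker T_\calB^*)^\perp$, we have $P_{\ker T_\calB^*}T_\calB=0$, whence $M_0=P_{\ker T_\calB^*}N$. In particular $T_\calB^*M_0=0$, so $M_0\in\calL_\calB$ and $\wt\calB_L=\wt\calB(M_0)$ is a legitimate dual; moreover $(N-M_0)x=P_{\ran T_\calB}Nx$ for all $x$, hence $\|N-M_0\|=\|P_{\ran T_\calB}N\|$, matching the lower bound and establishing best approximation. The only subtlety—hardly a real obstacle—is recognizing that operator-norm minimization over the affine set $T_\calB(S_\calB|\calH_\calB)^{-1}+\calL_\calB$ reduces pointwise to the elementary orthogonal projection onto $\ker T_\calB^*$ inside $\frakK$.
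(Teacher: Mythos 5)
Your proof is correct and follows essentially the same route as the paper: reduce the perturbation estimate to Theorem~\ref{t:o-v-duals_sequences} with $\Delta=0$, and obtain the best-approximation claim from the pointwise Pythagorean identity with respect to the orthogonal decomposition $\frakK=\ker T_\calB^*\oplus\ran T_\calB$, identifying $M_0=P_{\ker T_\calB^*}T_{\wt\calA(L)}$ as the minimizer. The only blemish is the line ``$\Delta(\calH_\calA,\calH_\calB)=\delta(\calH_\calB,\calH_\calA)=0$'', which as written conflates the two one-sided gaps; the correct (and equally short) justification is that $\delta(\calH_\calA,\calH_\calB)\le\mu/\sqrt\alpha<1$ by \eqref{e:first_delta_ov} while $\delta(\calH_\calB,\calH_\calA)=0$, so $\Delta<1$ and Lemma~\ref{l:gap}(ii) forces both gaps to vanish, giving $\calH_\calB=\calH$ and hence $P_\calB=I$.
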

\begin{proof}
We only have to show that $\wt\calB_L$ is a best approximation of $\wt\calA(L)$ in $\calD(\calB)$. Note that $P_\calA = P_\calB = I$ in this case. We have to show that $\|T_{\wt\calB_L} - T_{\wt\calA(L)}\|\le\|T_{\wt\calB(M)} - T_{\wt\calA(L)}\|$ for all $M\in\calL_\calB$. Put $M_0 := P_{\ker T_\calB^*}R$, where $R = T_\calA S_\calA^{-1} + L$. Then $\wt\calB_L = \wt\calB(M_0)$ and for arbitrary $M\in\calL_\calB$ we have
\begin{align*}
T_{\wt\calB(M)} - T_{\wt\calA(L)} = \left(T_\calB S_\calB^{-1} + M\right) - R = P_{\ran T_\calB}\left(T_\calB S_\calB^{-1} - R\right) + P_{\ker T_\calB^*}(M - R).
\end{align*}
In particular, $T_{\wt\calB(M_0)} - T_{\wt\calA(L)} = P_{\ran T_\calB}\left(T_\calB S_\calB^{-1} - R\right)$. Thus, for any $x\in\calH$ we have
$$
\left\|\left(T_{\wt\calB(M)} - T_{\wt\calA(L)}\right)x\right\|^2\,\ge\,\left\|P_{\ran T_\calB}\left(T_\calB S_\calB^{-1} - R\right)x\right\|^2 = \left\|\left(T_{\wt\calB_L} - T_{\wt\calA(L)}\right)x\right\|^2,
$$
which proves the claim.
\end{proof}

\begin{rem}\label{r:many_things}
{\rm (a)} Intriguingly, Theorem \rmref{t:o-v-duals_sequences} shows that the canonical dual $\wt\calB$ of $\calB$ is in general not a best approximation of the canonical dual $\wt\calA$ of $\calA$ in $\calD(\calB)$. Indeed, in the operator-valued frame case it follows from the above proof that for $x\in\calH$ \braces{and $L=0$} we have
\begin{align*}
\left\|\left(T_{\wt\calB} - T_{\wt\calA}\right)x\right\|^2 - \left\|\left(T_{\wt\calB_L} - T_{\wt\calA}\right)x\right\|^2 = \left\|P_{\ker T_\calB^*}T_\calA S_\calA^{-1}x\right\|^2.
\end{align*}
For example, assume that $I$ is finite, $\dim\calH < \infty$, and $\ran T_\calA\cap\ran T_\calB = \{0\}$. Then $P_{\ker T_\calB^*}T_\calA S_\calA^{-1}$ is injective. Thus, there exists $c > 0$ such that $\|P_{\ker T_\calB^*}T_\calA S_\calA^{-1}x\|\ge c\|x\|$ for all $x\in\calH$. This yields
$$
\|T_{\wt\calB} - T_{\wt\calA}\|^2\ge \|T_{\wt\calB_L} - T_{\wt\calA}\|^2 + c^2 > \|T_{\wt\calB_L} - T_{\wt\calA}\|^2.
$$
Hence, in this case, $\wt\calB$ is not a best approximation of $\wt\calA$ in $\calD(\calB)$.

\smallskip
{\rm (b)} Let $\calH$ be finite-dimensional and $|I|$ be finite. Then $(\calB(\calH,\frakK),\<\cdot\,,\cdot\>_{\rm HS})$ is a Hilbert space, $T_\calB S_\calB^{-1}$ is perpendicular to $\calL_\calB$, and the orthogonal projection onto $\calL_\calB$ is given by \eqref{e:op_fd} (see Remark \ref{r:HS}). Hence, the orthogonal projection onto the affine subspace $\calD(\calB) = T_\calB S_\calB^{-1} + \calL_\calB$ is given by
$$
P_{\calD(\calB)}X = T_\calB S_\calB^{-1} + P_{\ker T_\calB^*}X,\qquad X\in\calB(\calH,\frakK).
$$
Therefore, {\it the} best approximation in the Hilbert space $(\calB(\calH,\frakK),\<\cdot\,,\cdot\>_{\rm HS})$ of $T_{\wt A(L)} = T_\calA S_\calA^{-1} + L$ in $\calD(\calB)$ is
$$
P_{\calD(\calB)}T_{\wt A(L)} = T_\calB S_\calB^{-1} + P_{\ker T_\calB^*}T_{\wt A(L)},
$$
which is exactly the analysis operator belonging to the dual of $\calB$ that we have chosen in Theorem \ref{t:pert_frame_duals}.

\smallskip
{\rm (c)} Note that in all estimates above the upper frame bound of $\calA$ does not play a role.

\smallskip
{\rm (d)} During our studies on the subject, we came across the paper \cite{ag13} where the authors prove the following: {\em Given an operator-valued frame $\calA$, a dual $\wt\calA$ of $\calA$ and $\mu > 0$ sufficiently small, then for every operator-valued frame $\calB$ being a $\mu$-perturbation of $\calA$ there exist $C > 0$ and a dual $\wt\calB$ of $\calB$ which is a $C\mu$-perturbation of $\wt\calA$}. The main differences to {\rm Theorem \rmref{t:pert_frame_duals}} are as follows:
\begin{itemize}
	\item In \cite{ag13} it is required that $\mu < \|L\|^{-1}$, whereas in {\rm Theorem \ref{t:pert_frame_duals}}, $\mu$ and $L$ are unrelated and $L$ can be arbitrary.
	\item We explicitly provide the dual $\wt\calB$ of $\calB$ whereas in \cite{ag13} it only appears in the proof: it is defined via $T_{\wt\calB} := (T_\calB S_\calB^{-1} + L)(I + T_\calB^*L)^{-1}$ \braces{which exists as $\mu < \|L\|^{-1}$ is assumed and $T_\calA^*L = 0$}.
	\item We evaluate the quality of our choice of the dual $\wt\calB$ by proving that it is a best approximation of $\wt\calA(L)$ in $\calD(\calB)$. Such an analysis is not contained in \cite{ag13}.
	\item The constant $C$ in \cite{ag13} is hidden in the proof and depends on the frame bounds of $\calA$, $\calB$, and $\wt\calA$, and on $\|L\|$. Here, we explicitly specify $C = \la$ which does only depend on the lower frame bound of $\calA$ and $\|L\|$.
\end{itemize}
Hence, {\rm Theorem \rmref{t:pert_frame_duals}} can be seen as an essential improvement of \cite{ag13}.
\end{rem}

Theorem \ref{t:o-v-duals_sequences} shows that, given two operator-valued frame sequences $\calA$ and $\calB$ which are close to each other, then for any dual $\wt\calA$ of $\calA$ there exists a special dual $\wt\calB$ of $\calB$ which is close to $\wt\calA$ (in the frame situation as close as possible). The following proposition now states that the mapping $\wt\calA\mapsto\wt\calB$ between $\calD(\calA)$ and $\calD(\calB)$ is one-to-one and onto.

\begin{prop}
Let $\calA$ be a $\calB(\calH,\calK)$-valued frame sequence with lower frame bound $\alpha$ and let $\calB$ be a $\mu$-perturbation of $\calA$ such that $\Delta(\calH_\calA,\calH_\calB) < 1$ and $\mu<\sqrt{\alpha}/2$. Then the {\rm (}affine{\rm )} mapping
$$
\calD(\calA)\to\calD(\calB),\qquad \wt\calA(L)\mapsto\wt\calB\left(P_{\ker T_\calB^*}T_{\wt\calA(L)}|\calH_\calB\right),
$$
is bijective.
\end{prop}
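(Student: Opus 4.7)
My plan: use the parametrizations $L\mapsto\wt\calA(L)$ and $M\mapsto\wt\calB(M)$ from Lemma \ref{l:o-v-dual_charac} to translate the statement into bijectivity of the affine map
$$
\Phi:\calL_\calA\to\calL_\calB,\qquad\Phi(L):=P_{\ker T_\calB^*}\left(T_\calA(S_\calA|\calH_\calA)^{-1}+L\right)P_\calA|\calH_\calB.
$$
Since an affine map is bijective exactly when its linear part is, the task reduces to showing that
$\Phi_0:\calL_\calA\to\calL_\calB$, $\Phi_0(L):=P_{\ker T_\calB^*}\,L\,(P_\calA|\calH_\calB)$, is a bijection. (It is a well-defined bounded linear map between these spaces, since $\ran\Phi_0(L)\subset\ker T_\calB^*$ automatically.)

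The next step is to extract two isomorphisms from the hypotheses. From $\Delta(\calH_\calA,\calH_\calB)<1$ and Lemma \ref{l:gap}(ii) I obtain that $U:=P_\calA|\calH_\calB:\calH_\calB\to\calH_\calA$ is an isomorphism. The stronger hypothesis $\mu<\sqrt\alpha/2$ feeds Theorem \ref{t:pert1}(ii), giving $\Delta(\ran T_\calA,\ran T_\calB)\le\mu/(\sqrt\alpha-\mu)<1$, and hence via \eqref{e:perp} also $\Delta(\ker T_\calA^*,\ker T_\calB^*)<1$. A second application of Lemma \ref{l:gap}(ii) then yields a second isomorphism $\pi:=P_{\ker T_\calB^*}|\ker T_\calA^*:\ker T_\calA^*\to\ker T_\calB^*$. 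These two isomorphisms, acting on the domain and codomain sides respectively, drive the rest of the argument.

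Injectivity is then a quick consequence: if $\Phi_0(L)=0$ then $P_{\ker T_\calB^*}LU=0$, and since $U$ is surjective this forces $P_{\ker T_\calB^*}L=0$, i.e., $\ran L\subset\ran T_\calB$. Combined with $\ran L\subset\ker T_\calA^*$ (which is the defining condition of $\calL_\calA$), this gives $\ran L\subset\ran T_\calB\cap\ker T_\calA^*=\{0\}$, where the last equality comes from Lemma \ref{l:gap}(i) applied to $V=\ran T_\calB$ and $W=\ran T_\calA$ — once more relying on $\mu<\sqrt\alpha/2$.

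Surjectivity is where a small idea is needed, and it is where I expect the only real obstacle to sit: given $M\in\calL_\calB$, the naive guess $L=MU^{-1}$ would satisfy $\Phi_0(L)=M$ but would in general violate the required constraint $T_\calA^*L=0$. The remedy is to first lift $M$ into $\ker T_\calA^*$ via the codomain isomorphism $\pi$: set
$$
L:=\pi^{-1}MU^{-1}\in\calB(\calH_\calA,\frakK).
$$
By construction $\ran L\subset\ker T_\calA^*$, so $L\in\calL_\calA$. Since the range of $\pi^{-1}M$ lies in $\ker T_\calA^*$, where $P_{\ker T_\calB^*}$ coincides with $\pi$, a direct calculation gives $\Phi_0(L)=P_{\ker T_\calB^*}\pi^{-1}MU^{-1}U=\pi\pi^{-1}M=M$, finishing the proof. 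The conceptual point to emphasize is that the two gap hypotheses $\Delta(\calH_\calA,\calH_\calB)<1$ and $\mu<\sqrt\alpha/2$ play genuinely distinct roles, the former providing $U$ and the latter providing $\pi$.
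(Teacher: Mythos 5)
Your proposal is correct and follows essentially the same route as the paper: the paper also reduces to the linear map $X\mapsto P_{\ker T_\calB^*}X(P_\calA|\calH_\calB)$ on $\calL_\calA$ and inverts it by $Y\mapsto(P_{\ker T_\calB^*}|\ker T_\calA^*)^{-1}Y(P_\calA|\calH_\calB)^{-1}$, which is exactly your $\pi^{-1}MU^{-1}$. The only cosmetic difference is that you argue injectivity separately via $\ran T_\calB\cap\ker T_\calA^*=\{0\}$, whereas the paper gets both directions at once by checking that this map is a two-sided inverse.
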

\begin{proof}
It obviously suffices to prove that the affine mapping $\calL_{\mathcal A}\to\calL_{\mathcal B}$, $L\mapsto P_{\ker T_\calB^*}T_{\wt\calA(L)}|\calH_\calB$, is bijective. Removing the constant affine part reduces the task to showing that the linear map
$$
\calR : \calL_\calA\to\calL_\calB,\qquad \calR X := P_{\ker T_\calB^*}X(P_\calA|\calH_\calB),\quad X\in\calL_\calA,
$$
is bijective. For this, we observe that $\mu < \sqrt\alpha/2$ and Theorem \ref{t:pert1} (ii) imply that $\Delta(\ran T_\calA,\ran T_\calB) < 1$. Thus, also $\Delta(\ker T_\calA^*,\ker T_\calB^*) < 1$ (see \eqref{e:perp}). Define an operator $\calQ$ by
$$
\calQ : \calL_\calB\to\calL_\calA,\qquad \calQ Y := \left(P_{\ker T_\calB^*}|\ker T_\calA^*\right)^{-1}Y\left(P_\calA|\calH_\calB\right)^{-1},\quad Y\in\calL_\calB.
$$
Note that the inverses in the definiton of $\calQ$ exist due to Lemma \ref{l:gap}(ii) and that $Y$ maps $\calH_\calB$ to $\ker T_\calB^*$ so that $\calQ$ is well defined. Now, for $X\in\calL_\calA$ and $Y\in\calL_\calB$ we have
$$
\calQ\calR X = \left(P_{\ker T_\calB^*}|\ker T_\calA^*\right)^{-1}P_{\ker T_\calB^*}X(P_\calA|\calH_\calB)\left(P_\calA|\calH_\calB\right)^{-1} = X,
$$
as well as
$$
\calR\calQ Y = P_{\ker T_\calB^*}\left(P_{\ker T_\calB^*}|\ker T_\calA^*\right)^{-1}Y\left(P_\calA|\calH_\calB\right)^{-1}(P_\calA|\calH_\calB) = Y.
$$
Hence, $\calR^{-1} = \calQ$ exists.
\end{proof}

\begin{ex}
Let us consider the so-called Mercedes-Benz frame in $\R^2$, which is defined as
$$
\calA = \left\{\sqrt{\frac{2}{3}}(0,1),\sqrt{\frac{2}{3}}\left(\frac{\sqrt{3}}{2},-\frac{1}{2}\right),  
\sqrt{\frac{2}{3}}\left(-\frac{\sqrt{3}}{2},-\frac{1}{2}\right)\right\}.
$$
Let $\varepsilon\in (0,\sqrt{15}/4)$ and let $\calB$ be the set
$$
\calB = \left\{\sqrt{\frac{2}{3}}\left(\varepsilon,\sqrt{1-\varepsilon^2}\right),\sqrt{\frac{2}{3}}\left(\frac{\sqrt{3}}{2},-\frac{1}{2}\right),  
\sqrt{\frac{2}{3}}\left(-\frac{\sqrt{3}}{2},-\frac{1}{2}\right)\right\}.
$$
The analysis operators $T_\calA,T_\calB\in\calB(\R^2, \R^3)\cong\R^{3\times 2}$ are given by
$$
T_\calA = \frac 1 {\sqrt 6}\begin{pmatrix}
                0&2\\
								\sqrt{3}&-1\\
                -\sqrt{3}&-1
               \end{pmatrix}
\qquad\text{and}\qquad
T_\calB = \frac 1 {\sqrt 6}\begin{pmatrix}
                2\veps&2\sqrt{1-\varepsilon^2}\\
                \sqrt{3}&-1\\
                -\sqrt{3}&-1\\
               \end{pmatrix}
$$
and $\|T_\calA-T_\calB\| = 2\sqrt{\delta/3}$, where $\delta = 1 - \sqrt{1 - \veps^2}$. As $\veps < \sqrt{15}/4$, we have that $2\sqrt{\delta/3} < 1$, and since $\calA$ is a tight frame for $\R^2$ with frame bound $1$, it follows from Theorem \ref{t:pert1} that $\calB$ is a frame for $\R^2$ with frame bounds $(1-\varepsilon)^2$, $(1+\varepsilon)^2$. 

Given any dual of $\calA$, we will compute explicitly the dual of $\calB$ which is the best approximation in the sense of Theorem \ref{t:pert_frame_duals}. For this, recall that by Lemma \ref{l:o-v-dual_charac}, a dual $\wt\calA(L)$ of $\calA$ has the analysis operator $T_{\wt \calA(L)}=T_\calA S_\calA^{-1}+L$, where $L\in \R^{3\times 2}$ such that $L^*T_\calA=0$. An easy calculation shows that any $L$ with the latter property is of the form
$$
L=\begin{pmatrix}a&b\\a&b\\a&b\\\end{pmatrix},\qquad a,b\in\R.
$$
Let us fix $a,b\in\R$ and thereby a dual $\wt A(L)$ for $\calA$. Then, by Theorem \ref{t:pert_frame_duals}, the best approximation to $\wt A(L)$ in $\calD(\calB)$ is $\wt \calB_L=\wt B(P_{\ker T_\calB^*}T_{\wt\calA(L)})$ whose analysis operator is 
\begin{align*}
T_{\wt \calB_L}
&=T_\calB S_\calB^{-1}+P_{\ker T_\calB^*}T_{\wt\calA(L)} =  T_\calB S_\calB^{-1}+(I-T_\calB S_\calB^{-1}T_\calB^*)T_{\wt\calA(L)}\\
&= \frac{1}{\Delta}
\begin{pmatrix}
3(1 + 2t)a  &  \left(\sqrt{6} + 3b\right)(1 + 2t)\\
\frac\Delta{\sqrt 2} + p_-(t)a\;   &  -\sqrt{\frac 3 2}\Delta + \left(\sqrt{\frac 2 3} + b\right)p_-(t)\\
\frac{-\Delta}{\sqrt 2} + p_+(t)a\;   &   -\sqrt{\frac 3 2}\Delta + \left(\sqrt{\frac 2 3} + b\right)p_+(t)
\end{pmatrix},
\end{align*}
where $\Delta = 9-4\veps^2$, $t = \sqrt{1-\veps^2}$, and $p_\pm(t) = 6t^2 + (3\pm2\sqrt 3\veps)t\pm\sqrt 3\veps$.

As is easily seen, for each considered $\veps$ the best approximation of the canonical dual of $\calA$ ($L=0$) in $\calD(\calB)$ is not the canonical dual of $\calB$(cf.\ Remark \ref{r:many_things}). But let us compute the dual of $\calA$ having the canonical dual of $\calB$ as its best approximation in $\calD(\calB)$. One possibility to do this is to compute the first row of $T_\calB S_\calB^{-1}$ and then to compare with that of $T_{\wt\calB_L}$ above to obtain $a$ and $b$. The first row of $T_\calB S_\calB^{-1}$ is given by
$$
e_1^TT_\calB S_\calB^{-1} = \sqrt{2/3}(\veps,t)S_\calB^{-1} = \frac{\sqrt 6}\Delta\left(\veps\,,\,3t\right).
$$
From here, we retrieve $(a,b) = \sqrt{2/3}(1+2t)^{-1}(\veps,t-1)$.

\end{ex}

\section{Perturbations of Fusion Frames}\label{sec:Perturbations_FF}
This section is devoted to studying the behavior of the canonical FF-dual (as defined in \eqref{e:canff}) under perturbations. We consider perturbations of fusion frames in the same sense as of operator-valued frames. More precisely:

\begin{defn}\label{d:perturbation_fus}
Let $\mu > 0$, and let $\calW=((W_i,c_i))_{i\in I}$ and $\calV =((V_i,d_i))_{i\in I}$ be two Bessel fusion sequences in $\calH$. We say that $\calV$ is a {\em $\mu$-perturbation} of $\calW$ {\rm (}and vice versa{\rm )} if
$(d_iP_{V_i})_{i\in I}$ is a $\mu$-perturbation of $(c_iP_{W_i})_{i\in I}$ in the sense of {\rm Definition \ref{def:perturbation}}, that is, when 
$
\|T_{\calW} - T_{\calV}\|\,\le\,\mu.
$
\end{defn}

\begin{rem}\label{r:fusi}
{\rm (a)} If $\calV = ((V_i,d_i))_{i\in I}$ is a $\mu$-perturbation of $\calW = ((W_i,c_i))_{i\in I}$, then for each $i\in I$ we have
\begin{equation}\label{e:projections}
\|c_iP_{W_i} - d_iP_{V_i}\| = \|(T_\calW^* - T_\calV^*)\frakE_i\|\le\mu.
\end{equation}
In particular, if $c_i = d_i = 1$, $i\in I$, then $\Delta(W_i,V_i)\le\mu$ for all $i\in I$. Since $c_i = \|c_iP_{W_i}\|\,\le\,\|c_iP_{W_i} - d_iP_{V_i}\| + d_i$ and $d_i\le\|c_iP_{W_i} - d_iP_{V_i}\| + c_i$, relation \eqref{e:projections} implies
$$
|c_i - d_i|\,\le\,\|c_iP_{W_i} - d_iP_{V_i}\|\,\le\,\mu.
$$

\medskip\noindent
{\rm (b)} In \cite{ckl}, a different notion of perturbation for fusion frames was considered {\rm (}see \cite[Definition 5.1]{ckl}{\rm )} and it was proven that fusion frames are stable under these  perturbations {\rm (}see \cite[Proposition 5.8]{ckl}{\rm )}. When $\calW=((W_i,c_i))_{i\in I}$ and $\calV =((V_i,c_i))_{i\in I}$ are Bessel fusion sequences and the sequence of weights $c:=\{c_i\}_{i\in I}$ belongs to $\ell^2(I)$, the notion of perturbation in \cite{ckl} implies that $\calV$ is a $\mu$-perturbation of $\calV$ for $\mu=(\la_1+ \la_2+\veps)\|c\|_{\ell^2(I)}$, with $\la_1,\la_2\geq0$ and $\veps>0$ being the perturbation parameters of \cite[Definition 5.1]{ckl}. On the other hand, by \eqref{e:projections}, {\rm Definition \ref{d:perturbation_fus}} implies \cite[Definition 5.1]{ckl} only when the weights satisfy $\inf_{i\in I} c_i >0$. However, in the finite-dimensional setting, both notions of perturbation are equivalent.
\end{rem}

In the following, we will show that the canonical FF-dual of a $\mu$-perturbation of a fusion frame $\calW$ will be a $C\mu$-perturbation of the canonical FF-dual $\wt\calW$ of $\calW$, where $C > 0$ depends on $\mu$ and $\calW$. For this, we shall exploit the following two lemmas.

\begin{lem}\label{l:PQ}
Let $P$ and $Q$ be orthogonal projections in $\calH$ and $c,d > 0$. Then
$$
\|P - Q\|\,\le\,\sqrt{\frac 1 {c^2} + \frac 1 {d^2}}\cdot\|cP - dQ\|.
$$
\end{lem}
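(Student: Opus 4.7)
The key idea is to bound $\|P-Q\|$ by the two quantities $\|(I-Q)P\|$ and $\|(I-P)Q\|$, each of which can easily be related to $\|cP-dQ\|$ by premultiplying with a projection that annihilates one of the summands.

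First I would observe the two ``annihilation'' estimates. Since $Q$ is a projection,
\[
(I-Q)(cP - dQ) \;=\; c(I-Q)P,
\]
so that $\|(I-Q)P\|\le\tfrac{1}{c}\|cP - dQ\|$. Symmetrically,
\[
(I-P)(cP - dQ) \;=\; -d(I-P)Q,
\]
yielding $\|(I-P)Q\|\le\tfrac{1}{d}\|cP - dQ\|$. This is the only place the hypothesis $c,d>0$ is used in an essential way.

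Next I would exploit the decomposition
\[
P - Q \;=\; (I-Q)P \;-\; Q(I-P),
\]
which is a direct algebraic check. The crucial point is that for every $x\in\calH$ the vector $(I-Q)Px$ lies in $\ker Q$ while $Q(I-P)x$ lies in $\ran Q$, so these two summands are \emph{orthogonal}. By the Pythagorean identity,
\[
\|(P-Q)x\|^{2} \;=\; \|(I-Q)Px\|^{2} + \|Q(I-P)x\|^{2}
\;\le\; \bigl(\|(I-Q)P\|^{2} + \|Q(I-P)\|^{2}\bigr)\|x\|^{2}.
\]

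Finally, taking the supremum over unit vectors $x$ and using $\|Q(I-P)\| = \|(I-P)Q\|$ (adjoints have equal norm) gives
\[
\|P-Q\|^{2} \;\le\; \|(I-Q)P\|^{2} + \|(I-P)Q\|^{2} \;\le\; \left(\frac{1}{c^{2}} + \frac{1}{d^{2}}\right)\|cP - dQ\|^{2},
\]
which is the claim after taking square roots. There is no real obstacle here: once one notices that the orthogonality of $\ker Q$ and $\ran Q$ turns the natural decomposition of $P-Q$ into a Pythagorean sum, the two one-sided bounds combine to produce precisely the $\sqrt{c^{-2}+d^{-2}}$ factor.
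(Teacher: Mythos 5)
Your proof is correct and follows essentially the same route as the paper: both establish the one-sided bounds $\|(I-Q)P\|\le c^{-1}\|cP-dQ\|$ and $\|(I-P)Q\|\le d^{-1}\|cP-dQ\|$ (you by premultiplying with the contractions $I-Q$ and $I-P$, the paper by the equivalent device of orthogonally decomposing $cPx-dQx$ and discarding a summand), and both then combine them via the Pythagorean splitting $\|(P-Q)x\|^2=\|(I-Q)Px\|^2+\|Q(I-P)x\|^2$.
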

\begin{proof}
Let $x\in\calH$, $\|x\|=1$. Then we have
\begin{align*}
\|cPx - dQx\|^2
&= \|cQPx + c(I-Q)Px - dQx\|^2\\
&= \|Q(cPx - dx)\|^2 + c^2\|(I-Q)Px\|^2\\
&\ge c^2\|(I-Q)Px\|^2.
\end{align*}
Analogously, one obtains $\|cPx - dQx\|^2\ge d^2\|(I-P)Qx\|^2$. Thus, we have
$$
\|(I-Q)P\|\,\le\,\frac 1 c\|cP - dQ\|
\quad\text{and}\quad
\|(I - P)Q\|\,\le\,\frac 1 d\|cP - dQ\|.
$$
Hence, also $\|Q(I-P)\| = \|((I-P)Q)^*\| = \|(I-P)Q\|\le\tfrac 1 d\|cP - dQ\|$. Since, for $x\in\calH$,
$$
\|(P - Q)x\|^2 = \|QPx + (I-Q)Px - Qx\|^2 = \|Q(I-P)x\|^2 + \|(I-Q)Px\|^2,
$$
the claim follows from the above inequalities.
\end{proof}

\begin{lem}\label{l:R_W}
Let $W\subset\calH$ be a closed subspace and $A\in\calB(\calH)$ boundedly invertible. Then, for every $\la > 0$, the operator
$$
R(\la) := AP_W + \la A^{-*}P_{W^\perp}
$$
where $A^{-*} = (A^{-1})^*$, is boundedly invertible and we have
\begin{equation}\label{e:PS-1W}
P_{AW} = R(\la)^{-*}P_WA^*.
\end{equation}
Moreover, if $c,d > 0$ are such that $c\|x\|\le\|Ax\|\le d\|x\|$ for $x\in\calH$ then
\begin{equation}\label{e:norm_RW-1}
d^{-1}\min\{1,\la^{-1}cd\}\|x\|\,\le\,\|R(\la)^{-1}x\|\,\le\,c^{-1}\max\{1,\la^{-1}cd\}\|x\|.
\end{equation}
As a consequence, we obtain
\begin{equation}\label{e:PAW}
d^{-1}\|P_WA^*x\|\,\le\,\|P_{AW}x\|\,\le\,c^{-1}\|P_WA^*x\|.
\end{equation}
\end{lem}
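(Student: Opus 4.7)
My plan is to reduce everything to a direct computation of $R(\lambda)^*R(\lambda)$, which collapses to block-diagonal form on $W\oplus W^\perp$ because $A^*A^{-*}=I=A^{-1}A$ and $P_WP_{W^\perp}=0$:
\begin{equation*}
R(\lambda)^*R(\lambda)\,=\,P_WA^*AP_W\,+\,\lambda^2P_{W^\perp}A^{-1}A^{-*}P_{W^\perp}.
\end{equation*}
Evaluating the quadratic form on $x=x_1+x_2$ with $x_1:=P_Wx$ and $x_2:=P_{W^\perp}x$ yields
$\|R(\lambda)x\|^2=\|Ax_1\|^2+\lambda^2\|A^{-*}x_2\|^2$. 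The hypothesis $c\|x_1\|\le\|Ax_1\|\le d\|x_1\|$ together with its companions $d^{-1}\|x_2\|\le\|A^{-*}x_2\|\le c^{-1}\|x_2\|$ (which follow from $\|A^*\|\le d$ and $\|A^{-1}\|\le c^{-1}$) then sandwich $\|R(\lambda)x\|^2$ between $\min(c^2,\lambda^2d^{-2})\|x\|^2$ and $\max(d^2,\lambda^2c^{-2})\|x\|^2$. So $R(\lambda)$ is bounded below (hence by Lemma \ref{l:op1} injective with closed range) and bounded above.

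To upgrade injectivity to bijectivity I would verify that $\ker R(\lambda)^{*}=\{0\}$. If $R(\lambda)^*y=P_WA^*y+\lambda P_{W^\perp}A^{-1}y=0$, the two summands lie in the orthogonal subspaces $W$ and $W^\perp$ and therefore vanish separately, forcing $A^{-1}y\in W$ and $A^*y\in W^\perp$. Writing $y=Aw$ with $w\in W$ and pairing $A^*y\in W^\perp$ with $w\in W$ gives $0=\langle A^*Aw,w\rangle=\|Aw\|^2$, whence $y=0$. Combined with closed range this yields surjectivity, and \eqref{e:norm_RW-1} then drops out directly from the two-sided inequality above.

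For the identity \eqref{e:PS-1W} it suffices to verify $R(\lambda)^*P_{AW}=P_WA^*$. Since $A^{-1}(AW)=W$, the second summand of $R(\lambda)^*$ satisfies $P_{W^\perp}A^{-1}P_{AW}=0$, so the task reduces to $P_WA^*P_{AW}=P_WA^*$, i.e.\ $A^*(AW)^\perp\subset W^\perp$. This is immediate: for $y\perp AW$ and $w\in W$ one has $\langle A^*y,w\rangle=\langle y,Aw\rangle=0$.

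Finally, since $P_{AW}=R(\lambda)^{-*}P_WA^*$ holds for every $\lambda>0$, I am free to specialize to $\lambda=cd$, at which value both $\min(1,\lambda^{-1}cd)$ and $\max(1,\lambda^{-1}cd)$ equal $1$ in \eqref{e:norm_RW-1}. This gives $\|R(cd)^{-*}\|=\|R(cd)^{-1}\|\le c^{-1}$, producing the upper inequality in \eqref{e:PAW}, while $\|R(cd)^*\|=\|R(cd)\|\le d$ combined with the trivial bound $\|R(cd)^{-*}z\|\ge\|R(cd)^*\|^{-1}\|z\|$ (applied to $z=P_WA^*x$) produces the lower inequality. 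I expect the one nontrivial step to be the surjectivity of $R(\lambda)$, since the form computation only delivers injectivity with closed range and a small geometric argument in terms of $\ker R(\lambda)^*$ is required to close the gap; everything else is bookkeeping.
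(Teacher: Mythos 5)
Your proposal is correct and follows essentially the same route as the paper: both rest on the orthogonal decomposition $\|R(\la)x\|^2=\|AP_Wx\|^2+\la^2\|A^{-*}P_{W^\perp}x\|^2$, the identity $P_{AW}R(\la)=AP_W$ (which you verify in its adjoint form $R(\la)^*P_{AW}=P_WA^*$ using $(AW)^\perp\supset A^{-*}W^\perp$), and the specialization $\la=cd$ to obtain \eqref{e:PAW}. The only cosmetic difference is that the paper deduces bounded invertibility directly from $(AW)^\perp=A^{-*}W^\perp$, whereas you supply the surjectivity via the lower bound together with $\ker R(\la)^*=\{0\}$; both arguments are sound.
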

\begin{proof}
First of all, we note that $(AW)^\perp = A^{-*}W^\perp$. From this, it immediately follows that $R(\la)$ is boundedly invertible and that $P_{AW}R(\la) = AP_W$. The latter implies $P_{AW} = AP_WR(\la)^{-1}$. Adjoining this gives \eqref{e:PS-1W}. For the proof of \eqref{e:norm_RW-1} let $x\in\calH$. Then we obtain
\begin{align*}
\|R(\la)x\|^2
&= \|AP_Wx\|^2 + \la^2\|A^{-*}P_{W^\perp}x\|^2\ge c^2\|P_Wx\|^2 + \la^2d^{-2}\|P_{W^\perp}x\|^2\\
&\ge\min\{c^2,\la^2d^{-2}\}\|x\|^2,
\end{align*}
as well as
\begin{align*}
\|R(\la)x\|^2
&= \|AP_Wx\|^2 + \la^2\|A^{-*}P_{W^\perp}x\|^2\le d^2\|P_Wx\|^2 + \la^2c^{-2}\|P_{W^\perp}x\|^2\\
&\le\max\{d^2,\la^2c^{-2}\}\|x\|^2.
\end{align*}
This implies \eqref{e:norm_RW-1}. Setting $\la = cd$ and using \eqref{e:PS-1W} yields \eqref{e:PAW}.
\end{proof}

We briefly remark that Lemma \ref{l:R_W} immediately implies the following corollary which was already proven in \cite[Theorem 2.4]{g}.

\begin{cor}\label{c:gavruta_new}
Let $((W_i,c_i))_{i\in I}$ be a fusion frame for $\calH$ with bounds $\alpha\le\beta$ and let $A\in\calB(\calH)$ be boundedly invertible. Then also $((AW_i,c_i))_{i\in I}$ is a fusion frame for $\calH$ with bounds $\alpha\gamma^{-2}\le\beta\gamma^2$, where $\gamma = \|A\|\|A^{-1}\|$.
\end{cor}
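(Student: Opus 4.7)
The plan is to deduce the corollary directly from the pointwise estimate \eqref{e:PAW} of Lemma \ref{l:R_W}. Since $A\in\calB(\calH)$ is boundedly invertible, we have $c\|x\|\le\|Ax\|\le d\|x\|$ for all $x\in\calH$ with the choice $c := \|A^{-1}\|^{-1}$ and $d := \|A\|$, so that $cd = \|A\|\cdot\|A^{-1}\|^{-1}\cdot\|A\|\|A^{-1}\| = \|A\|^2$ drops out harmlessly and, more importantly, $d^{-1} = \|A\|^{-1}$ and $c^{-1} = \|A^{-1}\|$ appear as the right scalar factors in \eqref{e:PAW}. Applied to each subspace $W_i$, this yields the chain
$$
\|A\|^{-1}\|P_{W_i}A^*x\|\,\le\,\|P_{AW_i}x\|\,\le\,\|A^{-1}\|\cdot\|P_{W_i}A^*x\|,\qquad i\in I,\;x\in\calH.
$$

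Next I would square both sides, multiply by $c_i^2$, and sum over $i\in I$ to obtain
$$
\|A\|^{-2}\sum_{i\in I}c_i^2\|P_{W_i}A^*x\|^2\,\le\,\sum_{i\in I}c_i^2\|P_{AW_i}x\|^2\,\le\,\|A^{-1}\|^2\sum_{i\in I}c_i^2\|P_{W_i}A^*x\|^2.
$$
The middle sum of the target frame inequality is thereby sandwiched between two expressions involving the original fusion frame applied to the vector $A^*x\in\calH$. Invoking the fusion frame inequality for $\calW = ((W_i,c_i))_{i\in I}$ at $A^*x$ gives $\alpha\|A^*x\|^2\le\sum_{i\in I}c_i^2\|P_{W_i}A^*x\|^2\le\beta\|A^*x\|^2$, and finally the elementary operator-theoretic bounds $\|A^*x\|\le\|A\|\|x\|$ and $\|A^*x\|\ge\|(A^*)^{-1}\|^{-1}\|x\| = \|A^{-1}\|^{-1}\|x\|$ (using $\|A^*\| = \|A\|$ and $\|(A^{-1})^*\| = \|A^{-1}\|$) convert these bounds into bounds in $\|x\|^2$, producing exactly $\alpha\gamma^{-2}\|x\|^2$ and $\beta\gamma^2\|x\|^2$ with $\gamma = \|A\|\|A^{-1}\|$.

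I expect no real obstacle here: all the substantive analytical work is already absorbed into Lemma \ref{l:R_W}. The only point requiring mild care is book-keeping of which of the two constants $\|A\|$ and $\|A^{-1}\|$ multiplies which side of the inequality, so that the one-sided estimates chain in the correct direction when combined with the upper/lower bound on $\|A^*x\|$; otherwise the proof is a direct two-step computation.
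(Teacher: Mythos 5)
Your argument is correct and is precisely the deduction the paper intends: the paper omits an explicit proof, stating only that Lemma \ref{l:R_W} ``immediately implies'' the corollary, and your proposal fills in exactly that route via \eqref{e:PAW} with $c=\|A^{-1}\|^{-1}$, $d=\|A\|$, followed by summing and applying the frame bounds of $\calW$ at $A^*x$. All constants check out, so there is nothing to correct.
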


We are now ready to prove our main result in this section.

\begin{thm}\label{thm:effect_pert_FF}
Let $\calW = ((W_i,c_i))_{i\in I}$ be a fusion frame for $\calH$ with fusion frame bounds $\alpha\le\beta$ and let $\calV = ((V_i,d_i))_{i\in I}$ be a $\mu$-perturbation of $\calW$, where $0<\mu<\sqrt{\alpha}$. If both sequences $(c_i)_{i\in I}$ and $(d_i)_{i\in I}$ are bounded from below by some $\tau > 0$ then the canonical FF-dual $\wt\calV$ of $\calV$, is a $C\mu$-perturbation of the canonical FF-dual $\wt\calW$ of $\calW$, where
$$
C = \frac{c^2+d^2}\alpha\left[ \frac{1 + (\alpha^{-1} + \beta)^2}{\sqrt{\alpha}}\left(\frac{\sqrt 2}{\tau} + cd^2\right) + d^2\left(1 + c^2d^2\right) \right]
$$
with $c := 2\sqrt\beta + \mu$ and $d := (\sqrt\alpha - \mu)^{-1}$.
\end{thm}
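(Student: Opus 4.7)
The plan is to bound $\|T_{\wt\calW} - T_{\wt\calV}\|$ by reducing the problem to the operator-valued frame case handled by Corollary \ref{c:canonicals_frame}, plus two correction terms that measure the gap between the canonical FF-dual and the canonical operator-valued dual of the associated operator-valued frames $\calA := (c_iP_{W_i})_{i\in I}$ and $\calB := (d_iP_{V_i})_{i\in I}$. Applying Theorem \ref{t:pert1} to $\calA$ and $\calB$ (for which $\calH_\calA = \calH_\calB = \calH$, so the gap hypothesis is trivial), one obtains that $\calV$ is a fusion frame for $\calH$ with $\|T_\calV\|\le\sqrt\beta+\mu$ and $\|S_\calV^{-1}\|\le d^2$, $d = (\sqrt\alpha-\mu)^{-1}$. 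Expanding $S_\calW-S_\calV = (T_\calW^*-T_\calV^*)T_\calW + T_\calV^*(T_\calW-T_\calV)$ yields $\|S_\calW-S_\calV\|\le c\mu$ with $c = 2\sqrt\beta+\mu$, and the second-resolvent identity then gives $\|S_\calW^{-1} - S_\calV^{-1}\|\le cd^2\mu/\alpha$.

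The core step is the three-term splitting
\begin{equation*}
T_{\wt\calW} - T_{\wt\calV} = \bigl(T_{\wt\calW} - T_\calA S_\calW^{-1}\bigr) + \bigl(T_\calA S_\calW^{-1} - T_\calB S_\calV^{-1}\bigr) - \bigl(T_{\wt\calV} - T_\calB S_\calV^{-1}\bigr),
\end{equation*}
where $T_\calA S_\calW^{-1}$ is the analysis operator of the canonical operator-valued dual of $\calA$ (not its canonical FF-dual), and similarly for $\calB$. Corollary \ref{c:canonicals_frame} bounds the middle term by $2\mu/(\sqrt\alpha(\sqrt\alpha-\mu))$. For each outer term, the $i$-th component has the form $c_i\|S_\calW^{-1}|W_i\|P_{S_\calW^{-1}W_i} - c_iP_{W_i}S_\calW^{-1}$; I would use identity \eqref{e:PS-1W} from Lemma \ref{l:R_W} with $A = S_\calW^{-1}$ to rewrite $P_{S_\calW^{-1}W_i} = R_i^{-*}P_{W_i}S_\calW^{-1}$, and then estimate the resulting expression via \eqref{e:norm_RW-1} and a Bessel-type argument exploiting the lower bound $c_i\ge\tau$.

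In parallel, I would control the change in the FF-dual weights by
\begin{equation*}
\tilde c_i - \tilde d_i = (c_i-d_i)\|S_\calW^{-1}|W_i\| + d_i\bigl(\|S_\calW^{-1}|W_i\| - \|S_\calV^{-1}|V_i\|\bigr),
\end{equation*}
where $|c_i - d_i|\le\mu$ comes from Remark \ref{r:fusi}, and Lemma \ref{l:PQ} applied to $\|c_iP_{W_i}-d_iP_{V_i}\|\le\mu$ yields $\|P_{W_i}-P_{V_i}\|\le\sqrt 2\,\mu/\tau$, which is the only place the hypothesis $c_i,d_i\ge\tau$ is really used. Combining this with $\|S_\calW^{-1} - S_\calV^{-1}\|\le cd^2\mu/\alpha$ bounds the second summand of the weight difference. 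Summing all contributions over $i\in I$ via the Bessel bounds of $\calA$ and $\calB$ then produces a bound of the form $C\mu$.

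The main obstacle is twofold. First, the weights $\tilde c_i = c_i\|S_\calW^{-1}|W_i\|$ are subspace-dependent, so estimating $|\tilde c_i - \tilde d_i|$ uniformly requires the lower bound $\tau$, producing the $\sqrt 2/\tau$ factor in $C$. Second, the orthogonal projections $P_{S_\calW^{-1}W_i}$ cannot be controlled directly and must be compared with the non-self-adjoint operators $P_{W_i}S_\calW^{-1}$ through Lemma \ref{l:R_W}, which is what produces the $cd^2$ contribution. Tracking all constants in the final Bessel summation is what yields the bracketed expression inside $C$ and the prefactor $(c^2+d^2)/\alpha$.
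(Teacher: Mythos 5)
Your reduction to Corollary \ref{c:canonicals_frame} does not work: the two ``correction terms'' in your three-term splitting are not $O(\mu)$, so the triangle inequality applied to that splitting cannot yield a bound of the form $C\mu$. Concretely, the first outer term $T_{\wt\calW}-T_\calA S_\calW^{-1}$ depends only on $\calW$ and not on $\mu$ or $\calV$ at all; its $i$-th component $c_i\bigl(\|S_\calW^{-1}|W_i\|P_{S_\calW^{-1}W_i}-P_{W_i}S_\calW^{-1}\bigr)$ is the difference between a multiple of an orthogonal projection (self-adjoint) and the generally non-self-adjoint operator $P_{W_i}S_\calW^{-1}$; it vanishes only when $P_{W_i}$ commutes with $S_\calW$ and $S_\calW^{-1}|W_i$ is a scalar, i.e.\ essentially only when each $W_i$ lies in an eigenspace of $S_\calW$ (compare Proposition \ref{p:dualff}, which says precisely that the canonical operator-valued dual of $(c_iP_{W_i})_{i\in I}$ corresponds to a fusion frame only in that degenerate situation). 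For a generic fusion frame this term is therefore a fixed nonzero operator, and no estimate via \eqref{e:norm_RW-1} or a Bessel-type summation can force it below $C\mu$ for small $\mu$. The two outer terms do cancel against each other as $\mu\to0$, but exploiting that cancellation means comparing $\hat c_iP_{S_\calW^{-1}W_i}$ directly with $\hat d_iP_{S_\calV^{-1}V_i}$ --- which is the whole difficulty, and exactly what your splitting was designed to bypass.

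The paper's proof makes this direct comparison: it represents both projections through \eqref{e:PS-1W}, namely $P_{S_\calW^{-1}W_i}=R_{W_i}^{-*}P_{W_i}S_\calW^{-1}$ and $P_{S_\calV^{-1}V_i}=R_{V_i}^{-*}P_{V_i}S_\calV^{-1}$ with $R_{W_i}=S_\calW^{-1}P_{W_i}+S_\calW P_{W_i^\perp}$, and then splits the difference of the $i$-th components into pieces each of which pairs a $\calW$-object with its $\calV$-counterpart: $R_{W_i}^{-1}$ versus $R_{V_i}^{-1}$ (via $R_{W_i}^{-1}-R_{V_i}^{-1}=R_{W_i}^{-1}(R_{V_i}-R_{W_i})R_{V_i}^{-1}$ and $\|R_{W_i}-R_{V_i}\|=O(\mu)$), the weights $\hat c_i=c_i\|S_\calW^{-1}|W_i\|$ versus $\hat d_i$, $c_iP_{W_i}$ versus $d_iP_{V_i}$, and $S_\calW^{-1}$ versus $S_\calV^{-1}$. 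Your auxiliary estimates --- $\|S_\calW-S_\calV\|\le c\mu$, $\|S_\calW^{-1}-S_\calV^{-1}\|\le cd^2\mu/\alpha$, $|c_i-d_i|\le\mu$, Lemma \ref{l:PQ} giving $\|P_{W_i}-P_{V_i}\|\le\sqrt2\,\mu/\tau$, and the final $\ell^2$-summation via the Bessel bounds --- are all correct and are indeed the ingredients of the actual argument, but they must be deployed inside such a term-by-term $\calW$-versus-$\calV$ decomposition of $\hat c_iP_{S_\calW^{-1}W_i}-\hat d_iP_{S_\calV^{-1}V_i}$ rather than around the splitting you propose.
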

\begin{proof}
For $i\in I$ we define the operators
$$
R_{W_i} := S_\calW^{-1}P_{W_i} + S_\calW P_{W_i^\perp}
\qquad\text{and}\qquad
R_{V_i} := S_\calV^{-1}P_{V_i} + S_\calV P_{V_i^\perp}.
$$
By Lemma \ref{l:R_W}, these are boundedly invertible and we have
$$
P_{S_\calW^{-1}W_i} = R_{W_i}^{-*}P_{W_i}S_\calW^{-1}
\qquad\text{and}\qquad
P_{S_\calV^{-1}V_i} = R_{V_i}^{-*}P_{V_i}S_\calV^{-1}.
$$
Furthermore, 
$$
\|R_{W_i}^{-1}\|\le\max\{\beta,\alpha^{-1}\}\le\alpha^{-1}+\beta
\quad\text{and}\quad
\|R_{V_i}^{-1}\|\le\max\{c^2,d^2\}\le c^2+d^2.
$$
Put $\hat c_i := \|S_\calW^{-1}|W_i\|c_i$ and $\hat d_i := \|S_\calV^{-1}|V_i\|d_i$. Then $\hat c_i\le \alpha^{-1}c_i$ and $\hat d_i\le (\sqrt\alpha - \mu)^{-2}d_i = d^2d_i$. For $x\in\calH$ define
$$
\Delta_i(x) := \Big\|\hat c_iP_{S_\calW^{-1}W_i}x - \hat d_iP_{S_\calV^{-1}V_i}x\Big\| = \left\|\hat c_iR_{W_i}^{-*}P_{W_i}S_\calW^{-1}x - \hat d_iR_{V_i}^{-*}P_{V_i}S_\calV^{-1}x\right\|.
$$
Since $\|T_{\wt\calW}x - T_{\wt\calV}x\|^2 = \sum_{i\in I}\Delta_i^2(x)$, it is our aim to estimate $\Delta_i(x)$. We have
\begin{align*}
\Delta_i(x)
&\le \left\|\hat c_i(R_{W_i}^{-*} - R_{V_i}^{-*})P_{W_i}S_\calW^{-1}x\right\| + \left\|R_{V_i}^{-*}\left(\hat c_i P_{W_i}S_\calW^{-1}x - \hat d_iP_{V_i}S_\calV^{-1}x\right)\right\|\\
&\le \left\|R_{W_i}^{-1} - R_{V_i}^{-1}\right\|\alpha^{-1}c_i\|P_{W_i}S_\calW^{-1}x\| + \left\|R_{V_i}^{-1}\right\|\left\|\hat c_iP_{W_i}S_\calW^{-1}x - \hat d_iP_{V_i}S_\calW^{-1}x\right\|\\
& \hspace{5.72cm} + \left\|R_{V_i}^{-1}\right\|d^2d_i\left\|P_{V_i}\left(S_\calW^{-1} - S_\calV^{-1}\right)x\right\|.
\end{align*}
Since $R_{W_i}^{-1} - R_{V_i}^{-1} = R_{W_i}^{-1}(R_{V_i} - R_{W_i})R_{V_i}^{-1}$ and $\|R_{V_i}^{-1}\|\le c^2+d^2$ by Lemma \ref{l:R_W}, with
$$
\Delta_i^{(1)} := \alpha^{-1}\left\|R_{W_i}^{-1}\right\|\left\|R_{W_i} - R_{V_i}\right\|
\qquad\text{and}\qquad
\Delta_i^{(2)} := \left|\left\|S_\calW^{-1}|W_i\right\| - \left\|S_\calV^{-1}|V_i\right\|\right|
$$
we obtain
\begin{align*}
\frac{\Delta_i(x)}{c^2+d^2}
&\le \Delta_i^{(1)}c_i\left\|P_{W_i}S_\calW^{-1}x\right\| + d^2d_i\left\|P_{V_i}\left(S_\calW^{-1} - S_\calV^{-1}\right)x\right\| + \Delta_i^{(2)}c_i\left\|P_{W_i}S_\calW^{-1}x\right\|\\ 
&\hspace{3.54cm}+ \left\|S_\calV^{-1}|V_i\right\|\left\|\left(c_iP_{W_i} - d_iP_{V_i}\right)S_\calW^{-1}x\right\|\\
&\le \left(\Delta_i^{(1)} + \Delta_i^{(2)}\right)c_i\left\|P_{W_i}S_\calW^{-1}x\right\| + d^2d_i\left\|P_{V_i}\left(S_\calW^{-1} - S_\calV^{-1}\right)x\right\|\\
&\hspace{5.25cm}+d^2\left\|\left(c_iP_{W_i} - d_iP_{V_i}\right)S_\calW^{-1}x\right\|.
\end{align*}
Let us start with estimating $\Delta_i^{(2)}$. For this, observe that $S_\calW^{-1} - S_\calV^{-1} = S_\calV^{-1}(S_\calV - S_\calW)S_\calW^{-1}$ and $S_\calW - S_\calV = T_\calW^*(T_\calW - T_\calV) + (T_\calW^* - T_\calV^*)T_\calV$. Hence,
$$
\left\|S_\calW^{-1} - S_\calV^{-1}\right\|\le\frac {2\sqrt\beta + \mu}{\alpha(\sqrt\alpha - \mu)^2}\mu.
$$
From this, $\|S_\calW^{-1}|W_i\| = \|S_\calW^{-1}P_{W_i}\|$, Lemma \ref{l:PQ}, and Remark \ref{r:fusi} it thus follows that
\begin{align*}
\Delta_i^{(2)}
&= \left|\left\|S_\calW^{-1}P_{W_i}\right\| - \left\|S_\calV^{-1}P_{V_i}\right\|\right|\,\le\,\left\|S_\calW^{-1}P_{W_i} - S_\calV^{-1}P_{V_i}\right\|\\
&\le \left\|S_\calW^{-1}\left(P_{W_i} - P_{V_i}\right)\right\| + \left\|S_\calW^{-1} - S_\calV^{-1}\right\|\\
&\le \alpha^{-1}\sqrt{\frac 1 {c_i^2} + \frac 1 {d_i^2}}\;\left\|c_iP_{W_i} - d_iP_{V_i}\right\| + \frac{(2\sqrt\beta + \mu)\mu}{\alpha(\sqrt\alpha - \mu)^2}\\
&\le \frac{\sqrt 2 \mu}{\tau\alpha} + \frac{(2\sqrt\beta + \mu)\mu}{\alpha(\sqrt\alpha - \mu)^2} = M\mu,
\end{align*}
where
$$
M = \frac{1}{\alpha}\left(\frac{\sqrt 2}\tau + \frac{2\sqrt\beta + \mu}{(\sqrt\alpha - \mu)^2} \right) = \frac{1}{\alpha}\left(\frac{\sqrt 2}\tau + cd^2 \right).
$$
In order to estimate $\Delta_i^{(1)}$, we observe that
\begin{align*}
\|R_{W_i} - R_{V_i}\|
&= \left\|S_\calW^{-1}P_{W_i} + S_\calW P_{W_i^\perp} - S_\calV^{-1}P_{V_i} - S_\calV P_{V_i^\perp}\right\|\\
&\le M\mu + \left\|S_\calW P_{W_i^\perp} - S_\calV P_{V_i^\perp}\right\|\\
&\le M\mu + \left\|S_\calW\left(P_{W_i^\perp} - P_{V_i^\perp}\right)\right\| + \left\|\left(S_\calW - S_\calV\right)P_{V_i^\perp}\right\|\\
&\le \left( \frac{1}{\alpha}\left(\frac{\sqrt 2}\tau + cd^2 \right) + \frac{\sqrt 2\beta}\tau + c\right)\mu\\
&= \left(\alpha^{-1} + \beta\right)\left(\frac{\sqrt 2}{\tau} + cd^2\cdot\frac{1 + \alpha d^{-2}}{1 + \alpha\beta}\right)\mu\\
&\le \left(\alpha^{-1} + \beta\right)\left(\frac{\sqrt 2}{\tau} + cd^2\right)\mu = \alpha\left(\alpha^{-1} + \beta\right)M\mu,
\end{align*}
where in the last inequality we have used that $d^{-2}\le\beta$. Thus, we have
$$
\Delta_i^{(1)} + \Delta_i^{(2)}\,\le\,\left(\alpha^{-1} + \beta\right)^2 M\mu + M\mu = \left(1 + \left(\alpha^{-1} + \beta\right)^2\right)M\mu.
$$
Now, define the functionals
\begin{align*}
r_i(x) &:= \left(1 + \left(\alpha^{-1} + \beta\right)^2\right)M\mu c_i\left\|P_{W_i}S_\calW^{-1}x\right\|,\\
s_i(x) &:= d^2d_i\left\|P_{V_i}\left(S_\calW^{-1} - S_\calV^{-1}\right)x\right\|,\\
t_i(x) &:= d^2\left\|\left(c_iP_{W_i} - d_iP_{V_i}\right)S_\calW^{-1}x\right\|
\end{align*}
as well as
$$
R(x) := \sqrt{\sum_{i\in I}r_i^2(x)},\qquad S(x) := \sqrt{\sum_{i\in I}s_i^2(x)},
\qquad\text{and}\qquad
T(x) := \sqrt{\sum_{i\in I}t_i^2(x)}.
$$
Then $(c^2+d^2)^{-1}\Delta_i(x)\le r_i(x) + s_i(x) + t_i(x)$ and, by applying the triangle inequality on $\ell^2(I)$, one obtains
\begin{align*}
\frac{1}{(c^2+d^2)^2}\sum_{i\in I}\Delta_i^2(x)
&\le \sum_{i\in I}\left(r_i(x) + s_i(x) + t_i(x)\right)^2\,\le\,\left(R(x) + S(x) + T(x)\right)^2.
\end{align*}
We have (see Remark \ref{r:bound-for-dual})
\begin{align*}
R^2(x)&= \left(1 + \left(\alpha^{-1} + \beta\right)^2\right)^2M^2\mu^2\left\|T_\calW S_\calW^{-1}x\right\|^2\le\left(\frac{1 + \left(\alpha^{-1} + \beta\right)^2}{\sqrt\alpha}\right)^2 M^2\mu^2\|x\|^2,\\
S^2(x)&= d^4\left\|T_\calV\left(S_\calW^{-1} - S_\calV^{-1}\right)x\right\|^2
\le \alpha^{-2}c^4d^8\mu^2\|x\|^2,\\
T^2(x)&= d^4\left\|\left(T_\calW - T_\calV\right)S_\calW^{-1}x\right\|^2\le\alpha^{-2}d^4\mu^2\|x\|^2.
\end{align*}
That is,
$$
\frac{1}{c^2+d^2}\sqrt{\sum_{i\in I}\Delta_i^2(x)}\,\le\, \left[\frac{1 + \left(\alpha^{-1} + \beta\right)^2}{\sqrt\alpha}M + \alpha^{-1}c^2d^4 + \alpha^{-1}d^2\right]\mu\|x\|.
$$
This shows that $\|T_{\wt\calW} - T_{\wt\calV}\|\le C\mu$.
\end{proof}

\
\vspace{.25cm}
\noindent{\bf Acknowledgements:} The authors would like to thank Philipp Petersen (TU Berlin) for valuable discussions. Moreover, we thank S. Heineken and P. Morillas for pointing us to their paper \cite{hm}.

\vspace{.5cm}

\end{document}